\newtheorem{Theorem}{Theorem}[section]
\newtheorem{Proposition}[Theorem]{Proposition}
\newtheorem{Lemma}[Theorem]{Lemma}
\newtheorem{Corollary}[Theorem]{Corollary}
\theoremstyle{definition}
\newtheorem{Definition}{Definition}[section]
\theoremstyle{remark}
\numberwithin{equation}{section}
\newcommand{\Z}{{\mathbb Z}}
\newcommand{\R}{{\mathbb R}}
\newcommand{\C}{{\mathbb C}}
\newcommand{\N}{{\mathbb N}}
\begin{document}

\title[Reflectionless Herglotz functions]{Reflectionless Herglotz functions
and generalized Lyapunov exponents}

\author{Alexei Poltoratski}

\address{Mathematics Department\\
Texas A\&M University\\
College Station, TX 77843}

\email{alexei@math.tamu.edu}

\urladdr{www.math.tamu.edu/$\sim$alexei.poltoratski/}

\author{Christian Remling}

\address{Mathematics Department\\
University of Oklahoma\\
Norman, OK 73019}

\email{cremling@math.ou.edu}

\urladdr{www.math.ou.edu/$\sim$cremling}

\date{May 28, 2008}

\thanks{2000 {\it Mathematics Subject Classification.} Primary 31A20 34L40 47B39 81Q10;
Secondary 30C85 31A15}

\keywords{Jacobi matrix, Herglotz function, reflectionless measure,
equilibrium measure, density of states, Lyapunov exponent}

\begin{abstract}
We study several related aspects of reflectionless Jacobi matrices.
Our first set of results deals with the singular part of reflectionless measures.
We then introduce and discuss Lyapunov exponents, density of states measures, and other related
quantities in a \textit{general }setting. This is related to the previous
material because the density of states measures are reflectionless on certain sets.
\end{abstract}
\maketitle
\section{Introduction}
We study several aspects of reflectionless Jacobi matrices and Herglotz
functions in this paper. This is part of a larger program; the (perhaps
too ambitious) goal is to reach a systematic understanding of the absolutely
continuous spectrum of Jacobi operators $J$ on $\ell_2(\Z_+)$,
\[
(Ju)(n) = a(n)u(n+1)+a(n-1)u(n-1)+b(n)u(n) .
\]
We will always assume that the coefficients $a$, $b$ satisfy bounds of the form
\[
(C+1)^{-1} \le a(n) \le C+1, \quad |b(n)|\le C ,
\]
for some $C>0$. Note that if $J$ has some absolutely continuous spectrum,
then, by the decoupling argument of Dombrowski and Simon-Spencer \cite{Dom,SimSp},
it actually suffices to assume that $a(n)$ is bounded above;
the other two inequalities follow automatically.

Let us recall some definitions.
A \textit{Herglotz function }is a holomorphic mapping of
$\C^+=\{ z\in\C : \textrm{ Im }z >0\}$ to itself. We denote the set of Herglotz
functions by $\mathcal H$. If $F\in\mathcal H$, then $F(t)\equiv \lim_{y\to 0+} F(t+iy)$
exists for (Lebesgue) almost every $t\in\R$. We call $F$
\textit{reflectionless }(on $E\subset\R$) if
\begin{equation}
\label{refll}
\textrm{\rm Re }F(t) = 0 \quad \textrm{\rm for almost every }t\in E .
\end{equation}
We will also use the notation
\[
\mathcal N(E) = \{ F\in\mathcal H: F \textrm{ \rm reflectionless on }E \} .
\]
Herglotz functions have unique representations of the form

\begin{equation}
\label{Fmu}
F(z) = F_{\mu}(z) =
a + bz + \int_{-\infty}^{\infty} \left( \frac{1}{t-z} - \frac{t}{t^2+1} \right) \, d\mu(t) ,
\end{equation}
with $a\in\R$, $b\ge 0$, and a (positive) Borel measure $\mu$ on $\R$,
$\int_{\R} \frac{d\mu(t)}{t^2+1}<\infty$.
We will call such a measure $\mu$ \textit{reflectionless }(on $E$)
if $F_{\mu}\in \mathcal N(E)$ for some choice of $a\in\R$, $b\ge 0$; for easier
reference, it will also be convenient to introduce the notation
\[
\mathcal R(E) = \{ \mu : \mu \textrm{ reflectionless on }E \} .
\]
We emphasize again that in particular $\int_{\R}\frac{d\mu(t)}{t^2+1}<\infty$
for all $\mu\in\mathcal R(E)$. Also, if $\mu\in\mathcal R(E)$, then
$F_{\mu}$ will refer to the unique Herglotz function $F_{\mu}\in\mathcal N(E)$
that is associated with $\mu$ as in \eqref{Fmu}.

There are several reasons for being interested in the class $\mathcal N(E)$;
here, our main motivation is provided by the following fact: Call a (whole line) Jacobi matrix $J$
\textit{reflectionless }(on $E$) if $g_n\in\mathcal N(E)$ for all $n\in\Z$, where
\[
g_n(z) = \langle \delta_n, (J-z)^{-1} \delta_n \rangle
\]
is the $n$th diagonal element of the resolvent of $J$ (also known as the Green function).
Then \cite[Theorem 1.4]{Remac} says that all $\omega$ limit points of a Jacobi matrix $J$ with some
absolutely continuous spectrum are reflectionless on $E=\Sigma_{ac}$; here, $\Sigma_{ac}$
denotes an essential support of the absolutely continuous part of the spectral measure $\rho$
of $J$. This is defined up to sets of Lebesgue measure zero; we can obtain
a representative as $\Sigma_{ac}=\{t: d\rho/dt > 0 \}$.
Please see \cite{Remac} for the details.

In particular, we see that the class $\mathcal N(E)$
(for a bounded, essentially closed set $E\subset\R$) is an important object in the study of
the asymptotic behavior of the coefficients of Jacobi matrices $J$ with $\sigma_{ess}=
\Sigma_{ac}=E$. Statements addressing these issues
are sometimes referred to as \textit{Denisov-Rakhmanov Theorems}
(after \cite{Dencont,Den,Rakh}); at present, only the cases where $E$ is a
finite union of closed intervals \cite{DKS,Remac} or, more generally,
a so-called homogenous set \cite{Remac,SodYud} are well understood.

If $\mu\in\mathcal R(E)$, then
$\chi_E\, dt \ll d\mu_{ac}$. Indeed, this follows immediately from \eqref{refll}
because $d\mu_{ac}(t)=(1/\pi)\textrm{Im }F(t)\, dt$ and the boundary value of a
Herglotz function can not be zero on a set of positive measure. However, it is not
so clear in general if $\mu$ can also have a singular part on $E$. We have the
following criterion. We say that a (positive)
measure $\nu$ is supported by a (measurable)
set $S$ if $\nu(S^c)=0$; unless explicitly stated otherwise,
supports are \textit{not }assumed to be closed in this paper.
\begin{Theorem}
\label{T1.1}
Let $\mu\in\mathcal R(E)$. Then:\\
(a) $\mu_s$, the singular part of $\mu$, is supported by
\[
\left\{ x\in\R : \lim_{h\to 0+} \frac{|E\cap (x-h,x+h)|}{2h} = 0 \right\} .
\]
(b) Let $\theta\in L_{\infty}(E)$ be an arbitrary bounded measurable
function. Then $\mu_s$ is also supported by
\[
\{ x\in\R : (\widetilde{H}_E\theta)(x) \textrm{ \rm exists} \} .
\]
\end{Theorem}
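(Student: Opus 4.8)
The plan is to deduce both parts from the boundary behavior of $F = F_\mu$ at points of the singular spectrum together with the reflectionless condition \eqref{refll}. Two preliminary observations carry most of the weight. First, since $\mu_s \perp dt$, the symmetric derivative $D\mu(x) = \lim_{h\to 0+}\mu((x-h,x+h))/(2h)$ equals $+\infty$ for $\mu_s$-a.e.\ $x$, and this forces $\mathrm{Im}\,F(x+iy) \ge \mu((x-y,x+y))/(2y) \to +\infty$ as $y\to 0+$; in particular $F(x+iy)\to\infty$ at $\mu_s$-a.e.\ $x$. Second, because $F : \C^+\to\C^+$ omits $0$, the function $v(z) := \arg F(z) - \pi/2$ is a well-defined bounded harmonic function, $|v| < \pi/2$, and \eqref{refll} says exactly that its nontangential boundary values vanish a.e.\ on $E$ (one uses $\mathrm{Im}\,F > 0$ a.e.\ on $E$, so that $F(t)$ is purely imaginary there). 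Hence $v$ equals the Poisson integral of its boundary function, which vanishes a.e.\ on $E$, and therefore $|v(x+iy)| \le \tfrac{\pi}{2}\,\omega(x+iy,\R\setminus E)$, where $\omega$ denotes harmonic measure. I would also reduce at the outset to a finite measure $\mu$: both target sets contain everything outside a bounded neighborhood of $E$ (for $x$ far from the bounded set $E$ the density of $E$ at $x$ is $0$ and the integral defining $\widetilde{H}_E\theta(x)$ converges absolutely), so only the behavior near $E$ matters, and there $\mu$ may be truncated at the price of a perturbation of $F$ by a function analytic near $E$; under this truncation \eqref{refll} becomes reflectionlessness modulo a bounded analytic error, which suffices for everything below.

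For part (a) the model case is a point mass, and it already displays the mechanism. Suppose $x_0$ is an atom of $\mu$ of weight $\alpha = \mu(\{x_0\}) > 0$, and write $F(z) = \alpha/(x_0 - z) + F_1(z)$ with $F_1 = F_{\mu - \alpha\delta_{x_0}}$ again Herglotz; then \eqref{refll} gives $\mathrm{Re}\,F_1(t) = \alpha/(t - x_0)$ for a.e.\ $t\in E$, so $|\mathrm{Re}\,F_1(t)| \ge \alpha/h$ on $E\cap(x_0-h,x_0+h)$. Split $\mu - \alpha\delta_{x_0}$ into its restriction $\nu_r$ to $(x_0-r,x_0+r)$, of total mass $\epsilon_r\to 0$ as $r\to 0$, and a remainder whose Herglotz function is analytic near $x_0$ and hence has $|\mathrm{Re}|$ bounded by some $M_r$ on a fixed neighborhood of $x_0$. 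For $h$ small (depending on $r$) this gives $E\cap(x_0-h,x_0+h) \subseteq \{t : |\mathrm{Re}\,K\nu_r(t)| > \alpha/(2h)\}$, and Kolmogorov's weak-type $(1,1)$ inequality for the conjugate function of the finite measure $\nu_r$ yields $|E\cap(x_0-h,x_0+h)| \le 2Ch\epsilon_r/\alpha$; letting $h\to 0$ and then $r\to 0$ shows that $E$ has density $0$ at $x_0$. For the singular continuous part one follows the same philosophy: at a $\mu_s$-typical $x_0$ one chooses, along a sequence of scales $r_n\to 0$, a clump $\nu_n$ of $\mu$ concentrated near $x_0$ (carrying a lot of mass because $D\mu(x_0)=\infty$), and one must show that the Cauchy transform of $\nu_n$ is forced to be large on most of a comparably small interval; combined with $\mathrm{Re}\,F = 0$ on $E$ and the weak-type bound, this again squeezes the density of $E$ at $x_0$ to $0$, with the harmonic-measure estimate $|v(x+iy)| \le \tfrac\pi2\,\omega(x+iy,\R\setminus E)$ used to keep $\arg F$ under control along the chosen scales. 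Making this estimate work for $\mu_{sc}$ — replacing a single dominant pole by spread-out mass, and fitting together the scales $r_n$, the truncation, and the weak-type threshold — is the step I expect to be the main obstacle.

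For part (b) I would recast the statement analytically: with $G(z) = \int_E \theta(t)/(t-z)\,dt$ the Cauchy transform of $\theta\chi_E\,dt$, the existence of $(\widetilde{H}_E\theta)(x)$ is equivalent to the existence of $\lim_{y\to 0+}\mathrm{Re}\,G(x+iy)$, while $\mathrm{Im}\,G(x+iy) = \pi\int_E P_y(x-t)\theta(t)\,dt$ stays bounded by $\pi\|\theta\|_\infty$ throughout $\C^+$. Since $\chi_E\,dt \ll \mu_{ac} \le \mu$, there is $g\in L^1(\mu)$ with $g\,d\mu = \theta\chi_E\,dt$, and $g = 0$ $\mu_s$-a.e.\ because $\theta\chi_E\,dt \perp \mu_s$; hence $G = K(g\mu)$, and by Poltoratski's theorem on nontangential boundary limits of ratios of Cauchy transforms, $G(z)/K\mu(z) \to g(x) = 0$ nontangentially at $\mu_s$-a.e.\ $x$. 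At such an $x$ we also know (first paragraph) that $K\mu(x+iy)$ blows up with imaginary part tending to $+\infty$. The remaining task is to upgrade $G = o(K\mu)$ to the statement that $\mathrm{Re}\,G(x+iy)$ actually converges, which I would attempt by running Poltoratski's argument directly for $K(g\mu)$ rather than only for the ratio, using the boundedness of $\mathrm{Im}\,G$ to pin down the imaginary part and \eqref{refll} to control $\mathrm{Re}\,K\mu$ along the approach to $x$; this upgrade is the delicate point of part (b).
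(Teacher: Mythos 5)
Both gaps you flag are genuine, and neither is a routine technicality. In part (a), your point-mass argument works precisely because a single atom produces a pointwise lower bound $|\mathrm{Re}\,F_1(t)|\ge\alpha/h$ on all of $E\cap(x_0-h,x_0+h)$, which places that set inside a super-level set of the conjugate function of a small measure; Kolmogorov's weak-type bound then applies. For the singular continuous part this mechanism breaks: the real part of the Cauchy transform of a spread-out positive clump $\nu_n$ has no sign and no pointwise lower bound near its support (it vanishes on a large set interlaced with the mass), so the reflectionless condition $\mathrm{Re}\,F=0$ on $E$ does not confine $E\cap(x_0-h,x_0+h)$ to any super-level set, and the weak-type inequality has nothing to bite on. In part (b), Poltoratski's theorem gives $G=o(F_\mu)$ at $\mu_s$-a.e.\ point with $F_\mu\to\infty$, which is compatible with $\mathrm{Re}\,G(x+iy)$ oscillating or diverging; the ``upgrade'' to actual convergence of $\mathrm{Re}\,G$ is the entire content of the statement, and nothing in the additive decomposition $F_\mu=K\mu$, $G=K(g\mu)$ supplies it.

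The missing idea is to exploit \eqref{refll} multiplicatively rather than additively. You have already isolated the right object: your $v=\arg F_\mu-\pi/2$ is, up to normalization, the Krein function $\xi=\tfrac1\pi\arg F_\mu$, and reflectionlessness says $\xi=1/2$ a.e.\ on $E$. The paper perturbs $\xi\mapsto\xi_\pm=\xi\pm\tfrac12\theta$ for any $\theta$ supported on $E$ with $|\theta|\le1$; since $\xi_\pm$ still take values in $[0,1]$, they are Krein functions of Herglotz functions $F_{\mu_\pm}$ satisfying $F_\mu=\sqrt{F_{\mu_+}F_{\mu_-}}$ and $F_{\mu_+}/F_{\mu_-}=e^L$, where $L$ is the regularized Cauchy integral of $\theta\chi_E\,dt$ (essentially your $G$, renormalized). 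Theorem \ref{TPol} is then used twice: first to show $\mu_s\ll\mu_\pm$ (via the identity $F_\mu=\sqrt{F_{\mu_+}F_{\mu_-}}$, which would be violated if $F_{\mu_+}/F_{\mu_s}\to0$ on a set of positive $\mu_s$-measure), and second to show that $F_{\mu_s}/F_{\mu_\pm}\to f_\pm$ with $0<f_\pm<\infty$ $\mu_s$-a.e. Hence $e^{L(x+iy)}$ has a finite positive limit $\mu_s$-a.e., and since $\mathrm{Im}\,L\in(-\pi,\pi)$ there is no winding ambiguity: $L(x+iy)$ converges to a real limit $\mu_s$-a.e. Taking $\theta=\chi_E$, the bound $\mathrm{Im}\,L(x+iy)\ge\tfrac1{2y}|E\cap(x-y,x+y)|$ forces $E$ to have density zero at $\mu_s$-a.e.\ point (part (a), with no case distinction between atoms and singular continuous mass), and convergence of $\mathrm{Re}\,L$ is exactly part (b). In short, the exponential converts the qualitative information ``a ratio of Cauchy transforms has a nonzero finite limit'' into the quantitative convergence of $L$ that your additive approach cannot reach.
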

Here, we define $\widetilde{H}_Ef$ as
\[
(\widetilde{H}_E f)(x) = \lim_{y\to 0+} \int_E \left( \frac{t-x}{(t-x)^2 + y^2} -
\frac{t}{t^2+1} \right) \, f(t)\,dt ,
\]
if the limit exists.
This is closely related to the \textit{Hilbert transform}
\[
(Hf)(x) = \lim_{y\to 0+} \int_{|t-x|>y} \frac{f(t)}{t-x}\, dt .
\]
For instance, if $f\in L_1(\R)$, then both $\widetilde{H}_Ef$ and $H\chi_Ef$ exist
(Lebesgue) almost everywhere and define the same function (almost everywhere),
up to an additive constant.
Here, we are interested
in the \textit{singular }part of $\mu$, so sets of Lebesgue measure zero do matter,
and we distinguish between the two transforms. However, for a bounded, integrable
function, the difference between the integrals defining $\widetilde{H}_E$ and $H$ stays bounded,
so we also obtain the following variant of Theorem \ref{T1.1}(b):\\
\textit{(c) Let $\theta\in L_{\infty}(E)$. Then $\mu_s$ is supported by}
\[
\left\{ x\in\R : \sup_{0<y\le 1} \left| \int_{y<|t-x|\le 1} \frac{\theta(t)}{t-x}\,
\chi_E(t)\, dt \right| < \infty \right\} .
\]

It is tempting to compare Theorem \ref{T1.1}(a)
with the results of Sodin-Yuditskii \cite{SodYud}
on reflectionless Jacobi matrices with homogeneous spectrum. Sodin and Yuditskii
prove that these operators are almost periodic and have purely absolutely continuous
spectra. We can obtain this latter conclusion for the much larger class of
\textit{weakly }homogeneous spectra (to be defined in Section 2) directly
from Theorem \ref{T1.1}(a); see Definition \ref{D2.1} and Corollary \ref{C2.1} below.
However, this will not give any additional information on
the structure of the associated Jacobi matrices. Perhaps Theorem \ref{T1.1}
can actually be used as the starting point for such a refined analysis, but at present,
this is only a hope for the future.

It is easier to keep track of the pure point part of a reflectionless measure, and
in fact Theorem \ref{T1.1}(c) gives a sharp criterion in this case.
\begin{Theorem}
\label{T1.2}
Let $E\subset\R$ be a Borel set, and fix $x\in\R$. Then the following are equivalent:\\
(a) $\mu(\{x\})=0$ for all $\mu\in\mathcal R(E)$.\\
(b)
\[
\int_{x-1}^{x+1} \frac{\chi_E(t)}{|t-x|}\, dt = \infty
\]
\end{Theorem}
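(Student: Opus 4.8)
The plan is to prove (b)$\Rightarrow$(a) directly and (a)$\Rightarrow$(b) by contraposition, exhibiting a reflectionless measure with an atom at $x$ when the integral converges. Without loss of generality take $x=0$.

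For the implication (b)$\Rightarrow$(a), suppose $\mu\in\mathcal R(E)$ with $\mu(\{0\})=\alpha>0$; I want to derive a contradiction with $\int_{-1}^{1}\chi_E(t)/|t|\,dt=\infty$. Write $F=F_\mu$ for the associated reflectionless Herglotz function. An atom of weight $\alpha$ at $0$ means $F(z)=-\alpha/z+G(z)$ where $G=F_{\mu'}$ is again Herglotz with $\mu'=\mu-\alpha\delta_0$. The key is to use the representation $\chi_E\,dt\ll d\mu_{ac}$ (established in the text) together with the reflectionless condition $\operatorname{Re}F(t)=0$ a.e.\ on $E$. The cleanest route is the exponential Herglotz representation: write $F(z)=\exp\bigl(c+\int(\frac{1}{t-z}-\frac{t}{t^2+1})\,\xi(t)\,dt\bigr)$ with $0\le\xi\le 1$, where $\xi=\frac1\pi\arg F$ a.e. The reflectionless condition on $E$ forces $\xi(t)=1/2$ for a.e.\ $t\in E$ (since $\operatorname{Re}F=0$ with $\operatorname{Im}F>0$ means $\arg F=\pi/2$). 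An atom at $0$ of $\mu$ corresponds to $\lim_{y\to0+}y\,\operatorname{Im}F(iy)=\alpha>0$, equivalently $-\lim_{y\to0}iy\,F(iy)=\alpha$; through the exponential representation this pole forces $\int_{-1}^{1}\xi(t)/|t|\,dt<\infty$ (the logarithmic integral of $\xi$ near $0$ must converge for $|F(iy)|$ to blow up like $1/y$). But $\xi\ge\frac12\chi_E$ a.e.\ near $0$, so $\int_{-1}^1\chi_E(t)/|t|\,dt\le 2\int_{-1}^1\xi(t)/|t|\,dt<\infty$, contradicting (b). This gives (b)$\Rightarrow$(a).

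For (a)$\Rightarrow$(b), assume $\int_{-1}^{1}\chi_E(t)/|t|\,dt<\infty$; I must build $\mu\in\mathcal R(E)$ with $\mu(\{0\})>0$. The natural candidate is to prescribe $\xi(t)=\frac12\chi_E(t)$ and set
\[
F(z)=\exp\left(\frac{1}{2}\int_E\left(\frac{1}{t-z}-\frac{t}{t^2+1}\right)dt\right),
\]
possibly multiplied by a constant of modulus one chosen so $F$ maps into $\mathbb C^+$, and possibly adjusted by an additive real constant or a factor to make it genuinely Herglotz. Then $F$ is reflectionless on $E$ because $\arg F(t)=\pi\xi(t)=\pi/2$ for a.e.\ $t\in E$. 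The point is that the finiteness of $\int_{-1}^1\chi_E(t)/|t|\,dt$ ensures the exponent tends to $+\infty$ as $z\to 0$ along the imaginary axis at the rate $-\frac12\log|z|\cdot(\text{something})$—more precisely one checks $\int_E\frac{1}{t-iy}\,dt$ has real part staying bounded (by the convergence assumption) while contributing enough blow-up that $|F(iy)|\to\infty$; then $\mu=\frac1\pi\operatorname{Im}F\,dt+\mu_s$ will have $\lim_{y\to0}y\,\operatorname{Im}F(iy)>0$, i.e.\ an atom at $0$. One may need to tune the construction (e.g.\ replace $\frac12\chi_E$ by $\frac12\chi_E+\eta$ for a small bump $\eta$ supported near $0$, or rescale) to guarantee the limit is strictly positive rather than just $\liminf$; Theorem~\ref{T1.1}(c), applied with $\theta\equiv 1$, is also relevant here since $0$ lies outside the support set described there precisely when the truncated Hilbert transform integral diverges.

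The main obstacle I anticipate is the careful asymptotic analysis of the exponential representation near $z=0$: one must show that convergence of the weighted integral $\int\chi_E(t)/|t|\,dt$ is \emph{exactly} the threshold separating "$F$ has a pole at $0$" from "$F$ has no atom there," in both directions, and pin down that the relevant quantity is $\lim_{y\to0+}\log\bigl(y|F(iy)|\bigr)$, controlling it via $\int_E\bigl(\frac{t}{t^2+y^2}-\frac{t}{t^2+1}\bigr)dt$ and $\int_E\frac{y}{t^2+y^2}\,dt$ and relating these to $\int_E dt/|t|$ through standard but slightly delicate estimates (splitting $|t|\le y$ versus $|t|>y$). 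A secondary technical point is ensuring the constructed $F$ in (a)$\Rightarrow$(b) is actually Herglotz and that its measure $\mu$ genuinely satisfies $\int d\mu/(t^2+1)<\infty$, which should follow from boundedness of $\xi$ but needs to be stated.
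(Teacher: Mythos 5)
Your overall strategy---pass to the Krein function $\xi=\frac{1}{\pi}\arg F$, use that reflectionless means $\xi=1/2$ a.e.\ on $E$, and characterize an atom at $x$ by an integral condition on $\xi$ near $x$---is exactly the paper's. However, the integral condition you assert is wrong, and this breaks one of your two directions.

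You claim that an atom at $0$ forces $\int_{-1}^{1}\xi(t)/|t|\,dt<\infty$. Test this on $\mu=\delta_0$, $F(z)=-1/z$: there $\xi=\chi_{(0,\infty)}$, so $\int_0^1\xi(t)/t\,dt=\infty$ even though the atom is present. The correct criterion (the paper's Lemma \ref{L2.2}) is
\[
\mu(\{0\})>0 \iff \int_{-1}^{0}\frac{\xi(t)}{|t|}\,dt+\int_{0}^{1}\frac{1-\xi(t)}{t}\,dt<\infty ,
\]
i.e.\ the atom requires $\xi$ to be near $0$ to the \emph{left} of the point and near $1$ to the \emph{right}. The asymmetry arises because $\ln|F(iy)|$ must grow like $-\ln y$, and the divergent piece $\int_y^1 dt/t$ has to be cancelled by $\int_0^1 t\,\xi(t)/(t^2+y^2)\,dt$, which needs a full unit of $\xi$ on $(0,1)$; meanwhile any mass of $\xi$ on $(-1,0)$ only subtracts. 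Your direction (b)$\Rightarrow$(a) survives this correction, since on $E$ one has $\xi=1/2=1-\xi$ and either version of the criterion yields $\int_{x-1}^{x+1}\chi_E(t)/|t-x|\,dt<\infty$; but as written it rests on a false lemma.

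The direction (a)$\Rightarrow$(b) does not survive. With your candidate $\xi=\tfrac12\chi_E$, the second integral above is $\int_0^1\bigl(1-\tfrac12\chi_E(t)\bigr)/t\,dt\ge\tfrac12\int_0^1 dt/t=\infty$, so this $F$ \emph{never} has an atom at $0$ (for instance, for $E=(-1,-1/2)$ your $F$ is analytic and bounded near $0$). A small bump $\eta$ cannot repair this: you must put $\xi=1$ on essentially all of $E^c\cap(0,\infty)$ near the point (and $\xi=0$ on $E^c\cap(-\infty,0)$). This is precisely the paper's choice $\xi=\tfrac12\chi_E+\chi_{E^c\cap(x,\infty)}$, which keeps $\xi=1/2$ on $E$ (so the measure is still reflectionless) while making $|\xi-\chi_{(x,\infty)}|=\tfrac12\chi_E$ near $x$, so that the atom appears exactly when the integral in (b) converges.
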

Our final result on the singular part of reflectionless measures is of a conditional nature.
It says that if $\mu$ is also non-zero outside $E$, then this will only make it more difficult
to produce a singular part on $E$. To be able to formulate this concisely, we introduce
\[
\mathcal R_0(E) = \left\{ \mu\in \mathcal R(E): \mu(E^c)=0 \right\} .
\]
\begin{Theorem}
\label{T1.3}
Let $E\subset\R$ be a closed set. Suppose that
$\mu_s(E)=0$ for $\mu\in\mathcal R_0(E)$. Then $\nu_s(E)=0$ for {\rm all}
$\nu\in\mathcal R(E)$.
\end{Theorem}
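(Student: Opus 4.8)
The plan is to argue by contraposition: given $\nu\in\mathcal R(E)$ with $\nu_s(E)>0$, I will produce a measure $\mu\in\mathcal R_0(E)$ with $\mu_s(E)>0$. (If $|E|=0$ and $E\neq\emptyset$, then $\delta_x\in\mathcal R_0(E)$ has $(\delta_x)_s(E)=1$, so the hypothesis fails; hence we may assume $|E|>0$, and in particular that $E$ is not polar.)

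First I would split off the part of $\nu$ living outside $E$. Put $\nu=\nu|_E+\nu|_{E^c}$, let $G_2=F_{\nu|_{E^c}}$ (Herglotz representation \eqref{Fmu} with $a=b=0$), and set $G_1:=F_\nu-G_2$; then $G_1$ is again Herglotz and its measure in \eqref{Fmu} is $\nu|_E$. Since $\nu|_{E^c}$ sits off $E$, the function $G_2$ has no a.c.\ part on $E$, so $G_2(t)\in\R$ for a.e.\ $t\in E$, and comparing real parts in $F_\nu=G_1+G_2$ with $F_\nu\in\mathcal N(E)$ gives
\[
\mathrm{Re}\,G_1(t)=-G_2(t)\qquad\text{for a.e. }t\in E .
\]
Now I would look for a positive measure $\mu_3$, supported on $E$ with $\int\frac{d\mu_3(t)}{t^2+1}<\infty$, such that $\mu:=\nu|_E+\mu_3\in\mathcal R_0(E)$. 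Unwinding the definition of $\mathcal R_0(E)$ and using the identity just displayed, this amounts to asking that, for some constant $c$,
\[
(\widetilde H_E\mu_3)(t)=G_2(t)+c\qquad\text{for a.e. }t\in E ,
\]
i.e.\ the principal-value Cauchy integral of $\mu_3$ over $E$ should match that of $\nu|_{E^c}$ on $E$, up to an additive constant. Granting such a $\mu_3$, the proof is finished: $\mu\geq\nu|_E$ as measures, hence $\mu_s\geq(\nu|_E)_s$ and so $\mu_s(E)\geq\nu_s(E)>0$; and the imaginary parts are harmless, since $\mathrm{Im}\,F_\mu\geq\pi\,d\nu_{ac}/dt>0$ a.e.\ on $E$ (as $\nu\in\mathcal R(E)$ already forces $\chi_E\,dt\ll d\nu_{ac}$).

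For the construction of $\mu_3$ the natural candidate is the balayage (sweeping-out) of $\nu|_{E^c}$ onto the closed set $E$: the balayage of a positive measure is positive and supported on $E$, and it is designed precisely so that its logarithmic potential agrees with that of $\nu|_{E^c}$ quasi-everywhere on $E$; differentiating that identity yields exactly the required matching of Cauchy integrals on $E$ (quasi-everywhere, hence a.e., since $E$ is non-polar). This would give $\mu=\nu|_E+\mu_3\in\mathcal R_0(E)$.

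The hard part will be making the sweeping rigorous. When $E$ is bounded but $\nu(E^c)=\infty$, the plain balayage of $\nu|_{E^c}$ onto $E$ has infinite mass and is not admissible; one then has to split $\nu|_{E^c}$ into a piece compactly supported in $E^c$ (whose balayage is a genuine finite measure) and a ``far'' piece, whose contribution to $G_2$ on $E$ is a bounded real-analytic function that must instead be realized as $\widetilde H_E$ of a \emph{finite} positive measure on $E$ (again up to a constant). Pinning this down --- that the relevant real functions on $E$ really do lie in the positive range of $\widetilde H_E$ --- together with the precise dictionary between balayage/logarithmic potentials (correctly normalized for measures satisfying $\int\frac{d\mu(t)}{t^2+1}<\infty$) and boundary values of Herglotz functions, is where the genuine work lies. (For the purely atomic part there is a shortcut: by Theorems~\ref{T1.1}(a),(c) --- the latter applied to $\theta(t)=\mathrm{sgn}(t-x_0)$ --- together with Theorem~\ref{T1.2}, one can reduce to producing, for a single point, a reflectionless measure with an atom there and finite mass off $E$, which a single balayage then moves into $\mathcal R_0(E)$; but some care with the singular continuous part is still needed.)
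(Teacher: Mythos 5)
Your strategy --- contraposition plus an \emph{additive} construction, sweeping $\nu|_{E^c}$ onto $E$ so that $\mu=\nu|_E+\mu_3\in\mathcal R_0(E)$ dominates $\nu|_E$ and hence inherits its singular part --- is genuinely different from the paper's argument. The paper instead works \emph{multiplicatively}, at the level of the Krein function $\xi=\tfrac1\pi\operatorname{Im}\ln F_\nu$: it first shows via Poltoratski's theorem (Theorem \ref{TPol}) that under the hypothesis, $F_\mu/F_\nu\to0$ $\nu_s$-a.e.\ for every $\mu\in\mathcal R_0(E)$, and then builds one $\mu\in\mathcal R_0(E)$ (by setting $\xi_0=1/2$ on $E$ and rearranging $\xi$ into an indicator on each gap, using the monotonicity Lemma \ref{L2.1}) for which $|F_\mu|\gtrsim|F_\nu|$ near every $x\in E$, so the ratio cannot vanish at any point of $E$. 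Your outline, the reduction to matching $\operatorname{Re}F^0_{\mu_3}$ with $G_2$ on $E$ up to a constant and a linear term, and the mass-comparison $\mu_s\ge(\nu|_E)_s$ are all correct as far as they go; in a sense your $\mu_3$ is the additive shadow of the paper's $\xi_0$.

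However, the proposal stops exactly where the difficulty begins, and both of the steps you defer are genuine mathematical obstacles, not bookkeeping. First, balayage gives $U^{\hat\sigma}=U^{\sigma}+c$ only \emph{quasi-everywhere on} $E$, and a pointwise identity on a set that may have empty interior cannot simply be differentiated to yield $\operatorname{Re}F^0_{\hat\sigma}=\operatorname{Re}F^0_{\sigma}+c'$ a.e.\ on $E$; some argument (harmonicity of $U^{\hat\sigma}-U^{\sigma}$ off $E\cup\operatorname{supp}\sigma$ together with a boundary-value analysis, or a direct computation with the Cauchy transform of the swept measure) is required, and this is comparable in delicacy to the approximate-differentiation issues the paper confronts in Theorem \ref{T4.2}. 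Second, since only $\int\frac{d\nu(t)}{t^2+1}<\infty$ is guaranteed, the far part of $\nu|_{E^c}$ may have infinite mass; as you note, its balayage is then not an admissible measure (its mass, and even its $\frac{1}{t^2+1}$-integral, diverges when $E$ is bounded), and your proposed substitute --- that the resulting bounded, increasing, real-analytic function on $E$ lies in the range of $\widetilde H_E$ applied to \emph{finite positive} measures on $E$, up to a constant --- is asserted but not proved, and is not obviously true for a general closed $E$ of positive measure. Until these two points are settled, the construction of $\mu_3$, and with it the whole proof, is incomplete; the paper's rearrangement of the Krein function neatly sidesteps both issues, which is presumably why that route was taken.
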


As our next topic, we would like to address the following question: Given a set $E$,
how can we produce examples of measures that are reflectionless on $E$? Two quick
answers are immediately available: As already mentioned above, \cite[Theorem 1.4]{Remac}
says that if we start out with a Jacobi matrix with $\Sigma_{ac}\supset E$ and then
take $\omega$ limit points, then we can be sure that these will be reflectionless on $E$.
A different answer to our question was obtained in \cite[Theorem 5.4]{MelPVol}
(see also \cite{NazVolYud}):
the potential theoretic equilibrium measure is reflectionless on its support.

These two results are not totally unrelated. More precisely, the equilibrium measure
frequently arises as the density of states measure of Jacobi matrices with some
absolutely continuous spectrum. Observations of this type are not new; see \cite{Simpot}
for a recent survey on the use of potential theoretic notions. We will develop this
and related material quite systematically in the last three sections of this paper.

This discussion will begin in Section 3.
We will introduce Lyapunov exponents, density of states measures and
other related objects in a \textit{general }setting; these quantities, of course,
are in common use, but only for \textit{ergodic }operators.
We will do this in the obvious way by taking limits on subsequences.

The following general result on the Lyapunov exponent $\gamma$, which will be
established in Section 4, is the main reason why these quantities are of interest
to us here.
We will make use of the function $g(z)=\int\frac{dk(t)}{t-z}$, where $dk$
is a density of states measure. Please see Section 3 for the precise definitions.
Also recall that since $g\in\mathcal H$, the limit $g(x)\equiv\lim_{y\to 0+} g(x+iy)$
exists for almost every $x\in\R$.

We will need the following definition.
\begin{Definition}
\label{D4.1}
Let $f:\R\to\R$ be a (Lebesgue) measurable function. We say that $f$ is
\textit{approximately differentiable }at $x\in\R$ if there exists $d\in\R$ so that
\[
\lim_{h\to 0+}\frac{1}{2h} \left| \left\{
y\in(-h,h): \left| \frac{f(x+y)-f(x)}{y} - d \right| \ge \epsilon
\right\} \right| = 0
\]
for all $\epsilon>0$. In this case, we call $d$ the \textit{approximative derivative }of
$f$ at $x$, and we write $(D_{\textrm{ap}}f)(x)=d$.
\end{Definition}
Please see \cite{Bruck,Saks,Thom} for much more on this and related topics.
\begin{Theorem}
\label{T4.2}
For almost all $x\in\R$, we have that
\begin{equation}
\label{Dapgamma}
(D_{\textrm{\rm ap}}\gamma)(x)=-\textrm{\rm Re}\; g(x) .
\end{equation}
In particular, $\gamma$ is approximately differentiable almost everywhere.
\end{Theorem}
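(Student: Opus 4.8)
The plan is to reduce \eqref{Dapgamma} to a real-variable statement about logarithmic potentials and then prove it by a difference-quotient estimate that is \emph{averaged over the increment} --- this averaging being exactly what forces the conclusion to hold only for the approximate derivative.

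First I would use the Thouless-type formula relating $\gamma$ to $dk$ (from Section 3), which expresses $\gamma$ as the logarithmic potential of the density of states: $\gamma(x)=\int\log|x-t|\,dk(t)+c$ for a.e.\ $x$, with $c$ a constant. Together with the elementary a.e.\ identity $\mathrm{Re}\,g(x)=\lim_{y\to0}\int\frac{(t-x)\,dk(t)}{(t-x)^2+y^2}=\mathrm{p.v.}\int\frac{dk(t)}{t-x}$, this shows \eqref{Dapgamma} is equivalent to
\[
(D_{\mathrm{ap}}U_\nu)(x)=\mathrm{p.v.}\int\frac{d\nu(t)}{x-t}\quad\text{for a.e.\ }x,\qquad U_\nu(x):=\int\log|x-t|\,d\nu(t),
\]
for the finite positive measure $\nu=k$. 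Splitting off the part of $\nu$ supported far from the point in question --- its potential is real-analytic there and contributes the corresponding piece of the Cauchy transform as an honest derivative --- I may assume $\nu$ has compact support.

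Next I would fix a \emph{good} point $x$: not an atom of $\nu$; such that $d:=\mathrm{p.v.}\int\frac{d\nu(t)}{x-t}$ exists and $M^*(x):=\sup_{\varepsilon>0}\bigl|\int_{|t-x|>\varepsilon}\frac{d\nu(t)}{x-t}\bigr|<\infty$; such that $C_x:=\sup_{0<r<1}\nu(x-r,x+r)/r<\infty$; such that the symmetric derivative of $\nu_s$ vanishes (so $\varepsilon_s(r):=\nu_s(x-r,x+r)/r\to0$); and a Lebesgue point of $f:=d\nu_{ac}/dt$ (so $\omega(r):=\frac1{2r}\int_{|\sigma|<r}|f(x-\sigma)-f(x)|\,d\sigma\to0$). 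All of these hold a.e., by standard differentiation theory and the a.e.\ existence of principal-value Cauchy integrals of finite measures. By Chebyshev's inequality it suffices to show
\[
\frac1{2H}\int_{-H}^{H}\bigl|q(h)-d\bigr|\,dh\longrightarrow0\quad(H\to0),\qquad q(h):=\frac{U_\nu(x+h)-U_\nu(x)}{h}=\int\frac1h\log\Bigl|1+\frac h{x-t}\Bigr|\,d\nu(t).
\]
Fixing a large parameter $R\ge2$, I would split the $t$-integral at $|x-t|=R|h|$, $q(h)=P_h+Q_h$. On the far part $Q_h$ one has $|h/(x-t)|<1/R\le\frac12$, so $\frac1h\log|1+\frac h{x-t}|=\frac1{x-t}+O(|h|/(x-t)^2)$; integrating in $h$ and using $C_x<\infty$ controls the $O$-term by $O(C_xH/R)$, while $\int_{|x-t|>R|h|}\frac{d\nu(t)}{x-t}\to d$ with $|\cdot|\le M^*(x)$, so $\frac1{2H}\int_{-H}^H|Q_h-d|\,dh\le O(C_x/R)+o_H(1)$ by dominated convergence. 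On the near part $P_h$, the substitution $s=(x-t)/h$ gives $P_h=\mathrm{sgn}(h)\int_{|s|\le R}\log|1+\tfrac1s|\,f(x-hs)\,ds+\int_{|x-t|\le R|h|}\frac1h\log|1+\tfrac h{x-t}|\,d\nu_s(t)$; the main term of the first integral is $\mathrm{sgn}(h)f(x)\,\Theta_R$ with $\Theta_R:=\int_{|s|\le R}\log|1+\tfrac1s|\,ds$, and the decisive point is that $\int_{-R}^R\log|1+\tfrac1s|\,ds\to0$ as $R\to\infty$ (a cancellation of the two logarithmic tails, essentially a second-difference estimate for $s\mapsto s\log s$), so $|f(x)\Theta_R|\to0$. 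What remains of $\frac1{2H}\int_{-H}^H|P_h-\mathrm{sgn}(h)f(x)\Theta_R|\,dh$ is, after Tonelli, at most $\int_{|s|\le R}|\log|1+\tfrac1s||\,\omega(H|s|)\,ds$ (the $\nu_{ac}$-error, which tends to $0$ as $H\to0$ by dominated convergence, since $\omega\to0$, $\omega$ is bounded, and $\log|1+\tfrac1\cdot|\in L^1$ locally) plus a term bounded by $c(R)\varepsilon_s(RH)+C'(R)H^{-1}\int_{|x-t|<RH}(\log\tfrac H{|x-t|})^2\,d\nu_s(t)$, which an integration by parts against the distribution function of $\nu_s$ shows to be $o_H(1)$ because $\varepsilon_s(r)\to0$. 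Collecting everything, $\limsup_{H\to0}\frac1{2H}\int_{-H}^H|q(h)-d|\,dh\le|f(x)\Theta_R|+O(C_x/R)$; letting $R\to\infty$ finishes the proof.

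The main obstacle, and the reason the statement is about the \emph{approximate} derivative, is the near region $|x-t|\lesssim|h|$: there the logarithmic kernel is genuinely large, of size $\log\frac{|h|}{|x-t|}$, and since $x+h$ ranges over a full interval as $h$ varies one also meets the singularity of $\log|x+h-t|$ at $t=x+h$. No pointwise-in-$h$ bound survives; the estimate only closes after integrating over $h$, where the elementary principle that the average over $(0,H)$ of a quantity tending to $0$ tends to $0$ converts the a.e.\ vanishing of symmetric derivatives of $\nu_s$ (and the Lebesgue-point property of $f$) into the required $o(H)$ and vanishing-in-$H$ bounds. The only genuinely delicate computations are the limit $\int_{-R}^R\log|1+\tfrac1s|\,ds\to0$ and the integration-by-parts bookkeeping for the $\nu_s$-term; everything else is routine.
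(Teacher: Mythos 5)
Your argument is correct, and although it shares the paper's basic strategy---both proofs rest on the Thouless formula and on averaging the logarithmic difference quotient over the increment $h$, splitting $dk$ into absolutely continuous and singular parts at Lebesgue points of $k'$ resp.\ points where the symmetric derivative of $k_s$ vanishes---the reduction and the decomposition are genuinely different. The paper compares the difference quotient with $\textrm{Re}\,g(ih)$, restricts to increments $h\in[\delta y,y]$, averages the kernel $\phi_h(t)=\frac1h\phi(t/h)$ over an \emph{arbitrary} subset $B_y\subset[\delta y,y]$ of measure $\ge\delta y$, bounds the averaged kernel by $y^{-1}\ln(1+y/|t|)$ resp.\ $y/t^2$ (its Lemma 4.1), and deduces approximate differentiability by a contradiction argument; keeping $h\ge\delta y$ is what spares it the $1/h$ blow-up. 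You instead prove the stronger statement that the difference quotient converges to $d=\mathrm{p.v.}\int\frac{dk(t)}{x-t}$ in $L^1$-average over $(-H,H)$ and conclude by Chebyshev; the price of admitting $h$ near $0$ is paid by the split at $|x-t|=R|h|$, where the far region is controlled by the a.e.\ finiteness of the maximal truncated Cauchy transform $M^*(x)$ (an extra, but standard, a.e.\ ingredient the paper does not need) and the near region by the two-parameter limit ($H\to0$ first, then $R\to\infty$), with $\int_{-R}^R\log|1+\tfrac1s|\,ds\to0$ playing the role of the paper's identity $\int_{\R}\phi=0$. Your route buys a stronger conclusion ($L^1$-differentiability, obtained two-sidedly in one pass rather than as two one-sided derivatives) at the cost of a longer list of a.e.\ hypotheses; the paper's kernel estimate has the advantage of being reusable verbatim for Theorem \ref{T4.3}. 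Two points worth writing out explicitly: the error term in the far region requires the dyadic bound $\int_{|x-t|>\rho}(x-t)^{-2}\,d\nu\lesssim C_x/\rho$, and in the singular near part the inner $h$-integral genuinely produces $(\log(H/|x-t|))^2$ rather than a single logarithm (because of the extra $1/h$), so the dyadic sum becomes $\sum_n (n+1)^2 2^{-n}\varepsilon_s(2^{-n}H)$---still $o_H(1)$ by dominated convergence, but this is where the bookkeeping must be done carefully.
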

This is a development of \cite[Theorem 5.4]{MelPVol}. See also \cite{NazVolYud} for
subsequent work inspired by the same result.
Theorem \ref{T4.2} may be viewed as a result on interchanging limits because,
as we will discuss in Section 3, $\textrm{Re }g(x+iy)=-\partial_x \gamma(x+iy)$ for $y>0$,
so, for almost every $x\in\R$, $\textrm{Re }g(x)=-\lim_{y\to 0+} \partial_x\gamma(x+iy)$.
This raises the question of whether it is possible to perform these operations in the opposite
order; in other words, can we first take the boundary value of $\gamma$ to obtain
$\gamma(x)$ and then take the derivative? Theorem \ref{T4.2} provides an affirmative answer
if the derivative is taken in the approximate sense.

However, for us here, Theorem \ref{T4.2} is significant mainly because it identifies
sets on which $g$ is reflectionless; in particular, this set will contain
the points of constancy of $\gamma$. More precisely, we obtain the following:
\begin{Corollary}
\label{C4.1}
Let
\[
K = \left\{ c\in\R : \left| \gamma^{-1}(\{ c \} ) \right| > 0 \right\} ;
\quad\quad C=\gamma^{-1}(K) .
\]
Then $g\in\mathcal N(C)$.
\end{Corollary}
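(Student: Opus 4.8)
The plan is to derive Corollary \ref{C4.1} directly from Theorem \ref{T4.2}. First I would observe that the sets $\gamma^{-1}(\{c\})$ for distinct $c\in K$ are disjoint, so $K$ is at most countable (since each has positive Lebesgue measure and $\gamma$ takes values in a bounded interval), and therefore $C=\gamma^{-1}(K)=\bigcup_{c\in K}\gamma^{-1}(\{c\})$ is a genuine Borel set of the type to which the notion of reflectionlessness applies. Since $g\in\mathcal H$, to show $g\in\mathcal N(C)$ I must check that $\operatorname{Re} g(x)=0$ for almost every $x\in C$.

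The key step is the classical fact that a measurable function is approximately differentiable almost everywhere on any level set with the approximative derivative equal to zero there; more precisely, if $A=\gamma^{-1}(\{c\})$ for a fixed $c$, then $(D_{\textrm{ap}}\gamma)(x)=0$ for almost every $x\in A$. This is because at a point $x\in A$ of Lebesgue density one for $A$, the difference quotient $(\gamma(x+y)-\gamma(x))/y=(\gamma(x+y)-c)/y$ is zero whenever $x+y\in A$, and the set of such $y$ in $(-h,h)$ has relative measure tending to $1$; hence the density condition in Definition \ref{D4.1} holds with $d=0$. Combining this with Theorem \ref{T4.2}, which asserts $(D_{\textrm{ap}}\gamma)(x)=-\operatorname{Re} g(x)$ for a.e.\ $x\in\R$, we conclude that $\operatorname{Re} g(x)=0$ for almost every $x\in A$. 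Taking the countable union over $c\in K$ gives $\operatorname{Re} g(x)=0$ for a.e.\ $x\in C$, which is exactly the assertion $g\in\mathcal N(C)$.

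I expect the main (and essentially only) obstacle to be the lemma that $\gamma$ has vanishing approximative derivative almost everywhere on each of its level sets of positive measure. The argument sketched above via Lebesgue density points is standard — it appears in the references \cite{Bruck,Saks,Thom} cited in the paper — but one should be a little careful: the conclusion $(D_{\textrm{ap}}\gamma)(x)=0$ requires not only that $x$ be a density point of $A$ but that the difference quotient be controlled (here, identically zero) on the set $\{y: x+y\in A\}$, which it is, so the only genuine input is the Lebesgue density theorem. Everything else — countability of $K$, measurability of $C$, and the final assembly — is routine.
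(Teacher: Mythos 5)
Your proposal is correct and follows exactly the argument in the paper: $K$ is at most countable, $C$ decomposes as a countable union of level sets, the approximate derivative of $\gamma$ vanishes at every Lebesgue density point of each level set, and Theorem \ref{T4.2} then gives $\textrm{Re}\, g=0$ almost everywhere on $C$. No gaps; the density-point lemma you flag as the only nontrivial input is precisely the one the paper invokes.
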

In particular, this will imply that
\[
g\in\mathcal N(\Sigma_{ac});
\]
equivalently, density of states measures are always reflectionless on $\Sigma_{ac}$. See
Theorem \ref{T4.1} below for this conclusion.
\begin{proof}
$K$ is countable and thus $C$ is an at most countable union
of sets $C_j$ of the form $C_j=\gamma^{-1}(\{ c_j\} )$. Almost every point of $C_j$
is a point of density, and at such points, clearly $D_{\textrm{ap}}\gamma = 0$.
Theorem \ref{T4.2} now gives the Corollary.
\end{proof}
The method that we will use to prove Theorem \ref{T4.2} also gives
the following companion result on the regularity of $\gamma$.
In analogy to Definition \ref{D4.1}, introduce
\[
A(x,\epsilon;f) = \{ y\in\R: |f(y)-f(x)|\ge \epsilon\} ,
\]
and call a (measurable) function $f$
\textit{approximately continuous }at $x\in\R$ if
\[
\frac{1}{2h} \left| A(x,\epsilon;f) \cap (x-h,x+h) \right| \to 0
\quad\quad (h\to 0+)
\]
for all $\epsilon >0$. See again \cite{Bruck,Saks,Thom} for background information.
We will show that $\gamma$ satisfies the following related, but considerably stronger condition.
\begin{Theorem}
\label{T4.3}
For every $x\in\R$,
\[
\textrm{\rm cap}\left[ A(x,\epsilon;\gamma)\cap (x-h,x+h)\right] = o(h^N)
\quad\quad (h\to 0+)
\]
for arbitrary $N\ge 1$, $\epsilon>0$.
\end{Theorem}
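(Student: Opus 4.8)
The plan is to represent $\gamma$ as a logarithmic potential and then localize at scale $h$. As set up in Section 3, $\gamma(z)=\int\log|z-t|\,dk(t)+c_0$ for a real constant $c_0$, where $k$ is a probability measure with bounded support (Thouless' formula; equivalently, integrate the relation $\operatorname{Re}g=-\partial_x\gamma$). Writing $U^\mu(z)=\int\log\frac{1}{|z-t|}\,d\mu(t)$ for the logarithmic potential, we have $\gamma(y)-\gamma(x)=U^k(x)-U^k(y)$, so $c_0$ is irrelevant. The coefficient bound $a(n)\ge(C+1)^{-1}$ yields $\gamma\ge0$ (Section 3; e.g.\ $\|T_n(z)\|^2\ge|\det T_n(z)|\ge(C+1)^{-2}$), hence $\gamma(x)$ is finite, i.e.
\[
\int_{|t-x|<1}\log\frac{1}{|t-x|}\,dk(t)<\infty ,
\]
for \emph{every} $x\in\R$. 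Put $m(\delta):=k\bigl((x-\delta,x+\delta)\bigr)$. Two consequences of this finiteness will be used: (i) $\int_{|t-x|<\delta}\log\frac{1}{|t-x|}\,dk(t)\to0$ as $\delta\to0$, being the tail of a convergent integral; and (ii) since this integral dominates $m(\delta)\log\frac1\delta$ for $\delta<1$, we obtain the crucial rate $m(\delta)=o\bigl(1/\log\tfrac1\delta\bigr)$.

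Now fix $x$, fix $\epsilon\in(0,1]$, let $h$ be small, and set $L=\lceil 3/\epsilon\rceil$, $\nu=\nu_h:=k|_{(x-Lh,\,x+Lh)}$, $k'=k-\nu$. If $|t-x|\ge Lh$ and $y\in(x-h,x+h)$ then $|y-t|/|x-t|\in[1-1/L,1+1/L]$, so $|U^{k'}(y)-U^{k'}(x)|\le\log\frac{L}{L-1}\le\epsilon/2$; since $U^k=U^\nu+U^{k'}$, this gives
\[
A(x,\epsilon;\gamma)\cap(x-h,x+h)\ \subseteq\ \bigl\{\,y:\,|U^\nu(y)-U^\nu(x)|\ge\epsilon/2\,\bigr\}.
\]
I would estimate the two one-sided parts separately. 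The part $\{U^\nu(y)\le U^\nu(x)-\epsilon/2\}$ is \emph{empty} for small $h$: for $y\in(x-h,x+h)$ every point of $\operatorname{supp}\nu$ is within $2Lh<1$ of $y$, so $U^\nu(y)\ge m(Lh)\log\frac{1}{2Lh}\ge0$, whereas $U^\nu(x)=\int_{|t-x|<Lh}\log\frac{1}{|t-x|}\,dk(t)<\epsilon/2$ by (i); hence $U^\nu(y)>U^\nu(x)-\epsilon/2$.

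For the part $\{U^\nu(y)\ge U^\nu(x)+\epsilon/2\}$ I would use the standard capacity bound for superlevel sets of a potential (essentially Cartan's lemma): putting $E=\{z\in\C:U^\nu(z)\ge H\}$, for any compact $K\subseteq E$ with $\operatorname{cap}(K)>0$, equilibrium measure $\omega_K$ and Robin constant $V_K=-\log\operatorname{cap}(K)$,
\[
H=H\,\omega_K(K)\le\int U^\nu\,d\omega_K=\int U^{\omega_K}\,d\nu\le V_K\,m(Lh),
\]
so $\operatorname{cap}(K)\le e^{-H/m(Lh)}$ for every compact $K\subseteq E$; since the relevant real set lies in $E$, its capacity obeys the same bound. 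Taking $H=U^\nu(x)+\epsilon/2\ge\epsilon/2$, we get $\operatorname{cap}\bigl[A(x,\epsilon;\gamma)\cap(x-h,x+h)\bigr]\le e^{-\epsilon/(2m(Lh))}$ for all small $h$. To finish, insert the rate (ii): given $N\ge1$, since $m(Lh)\log\frac{1}{Lh}\to0$ we have $\epsilon/(2m(Lh))\ge(N+1)\log\frac{1}{Lh}$ once $h$ is small, so $e^{-\epsilon/(2m(Lh))}\le(Lh)^{N+1}=L^{N+1}h^{N+1}=o(h^N)$, as claimed.

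I expect the genuine content to sit entirely in observation (ii): it is the super-polynomial ($o(1/\log)$) decay of the local mass of $k$ that lets the Cartan-type bound beat every power of $h$, and this decay is available precisely because $a$ is bounded below, i.e.\ because $\gamma\ge0$; at a hypothetical point where $k$ concentrated faster (so $\gamma(x)=-\infty$) the statement would simply fail there. The other steps — the elementary far-field estimate for $U^{k'}$, the exclusion of the lower deviation via (i), and the equilibrium-measure estimate — are routine, and this localization is in the spirit of the method used for Theorem \ref{T4.2}.
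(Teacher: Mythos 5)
Your proof is correct, and it reaches the conclusion by a genuinely different organization of the same potential-theoretic ingredients. The paper argues by contradiction: it assumes $\textrm{cap}(B_n)\ge\delta y_n^N$ for the sets $B_n=\{t\in(-y_n,y_n):\gamma(t)\le\gamma(0)-\epsilon\}$ (the upward deviation being excluded by upper semicontinuity of $\gamma$), averages $\gamma(h)-\gamma(0)$ against the equilibrium measures $\mu_n$ of compact subsets of $B_n$, and uses Frostman's bound $\int\ln|s-t|\,d\mu_n(s)\ge C\ln y_n$ together with $\ln|t|\in L_1(dk)$ near $0$ and dominated convergence to show $\int(\gamma(h)-\gamma(0))\,d\mu_n(h)\to 0$, contradicting $\le-\epsilon$. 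You instead localize $dk$ at scale $Lh\sim h/\epsilon$, extract from the finiteness of $\gamma(x)$ the decay rate $k((x-\delta,x+\delta))=o(1/\log(1/\delta))$, and apply the Cartan-type superlevel-set estimate $\textrm{cap}\{U^{\nu}\ge H\}\le e^{-H/\nu(\R)}$ (which is the same Frostman-plus-Fubini step, run in the other direction); your treatment of the upward deviation of $\gamma$ is a direct quantitative substitute for the paper's appeal to upper semicontinuity. What your route buys is an explicit bound, $\textrm{cap}\bigl[A(x,\epsilon;\gamma)\cap(x-h,x+h)\bigr]\le e^{-\epsilon/(2k((x-Lh,x+Lh)))}$ for small $h$, from which the $o(h^N)$ statement (and in fact any rate dictated by the local mass of $dk$) follows immediately; the paper's contradiction argument yields only the stated asymptotics but reuses the averaging framework already set up for Theorem \ref{T4.2}. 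All the steps you label routine do check out: the far-field estimate, the positivity $U^{\nu}(x)\ge 0$ and $U^{\nu}(y)\ge 0$ for small $h$, and the legitimacy of $\int U^{\nu}\,d\omega_K=\int U^{\omega_K}\,d\nu$ with $U^{\omega_K}\le V_K$ everywhere.
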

Here, $\textrm{cap}(S)$ denotes the logarithmic capacity of a set $S$;
please consult \cite{Ran} for the definition and basic properties.

In fact, Theorem \ref{T4.3} is not really new: $\gamma$ is subharmonic and
thus continuous in the fine topology, so $A(x,\epsilon;\gamma)$ cannot have a
fine accumulation point at $x$, and now Wiener's criterion for thinness
\cite[Theorem 5.4.1]{Ran} can be used to deduce Theorem \ref{T4.3}. Our treatment
is direct and does not depend on any machinery, so perhaps it is of some interest also.

If $S\subset\R$ is a Borel set, then $|S|\le 4\, \textrm{cap}(S)$
(see \cite[Theorem 5.3.2(c)]{Ran}), so Theorem \ref{T4.3} implies that
\[
\left| A(x,\epsilon;\gamma)\cap (x-h,x+h)\right| = o(h^N)
\quad\quad (h\to 0+)
\]
for arbitrary $N\ge 1$, $\epsilon>0$. In particular, $\gamma(x)$ is approximately continuous
at all points $x\in\R$.

To round off our discussion, we will also gather a few applications of potential
theoretic notions (capacities, equilibrium measures) in the final section of this
paper. While these applications
are straightforward and also very similar (or identical) to previous work
(see \cite{Simpot,StT} for more
systematic expositions of this),
they do seem to illuminate our earlier discussions.
In particular, we hope that the material from Section 5 will
reinforce a point we are trying to make in Sections 3 and 4,
namely, that Lyapunov exponents and density of states measures can be very useful
tools for \textit{general }Jacobi matrices.

Among other things, we will point out that the equilibrium measure frequently
occurs as the (unique) density of states measure, and this measure is reflectionless
on its support. We also collect some general inequalities and inclusions involving
the support of the density of states and the sets $\Sigma_{ac}$, $\{t\in\R : \gamma(t)=0 \}$.
Please see Section 5 for the details.

\textit{Acknowledgment: }We thank David Damanik for illuminating discussions on the
theory of ergodic operators.
\section{The singular part of reflectionless measures}
Please recall the notation $F_{\mu}$ introduced in \eqref{Fmu}. Also, if
$f\ge 0$ is a Borel function, then, as expected, $f\mu$ will
denote the measure $(f\mu)(A)=\int_A f\, d\mu$. The following result
from \cite{Pol} will be our main tool in this section.
\begin{Theorem}[\cite{Pol}]
\label{TPol}
\[
\lim_{y\to 0+} \frac{F_{f\mu}(x+iy)}{F_{\mu}(x+iy)} = f(x)
\]
for $\mu_s$-almost every $x\in\R$.
\end{Theorem}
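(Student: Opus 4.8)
The plan is to derive this result --- Poltoratski's theorem on the nontangential boundary behaviour of normalized Cauchy transforms --- from rank-one perturbation theory and the Aleksandrov--Clark calculus. Write $\nu^\wedge(z)=\int\frac{d\nu(t)}{t-z}$; after a localization we may assume $\mu$ is finite and then work with $\mu^\wedge$, $(f\mu)^\wedge$ in place of $F_\mu$, $F_{f\mu}$ (these differ only by terms that stay bounded near a fixed real point, hence negligible against the part of the denominator that blows up). I would begin with a chain of reductions. The Radon--Nikodym factorization $d\mu=h\,d|\mu|$, $|h|\equiv 1$, reduces the complex case to positive $\mu$ (apply the positive case to the functions $hf$ and $h$ and divide). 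Splitting $\mu=\mu_s+\mu_{ac}$, dividing numerator and denominator of $(f\mu)^\wedge/\mu^\wedge$ by $\mu_s^\wedge$, and using that $\mu_s$ lives on the set where its symmetric derivative is $+\infty$, so that $\operatorname{Im}\mu_s^\wedge(x+iy)=\int\frac{y\,d\mu_s(t)}{(t-x)^2+y^2}\to\infty$ there, it then suffices to prove the theorem for singular $\mu$ together with the auxiliary fact
\begin{equation}
\label{star}
\sigma\perp\tau\ \text{(finite, positive)}\quad\Longrightarrow\quad \frac{\tau^\wedge(x+iy)}{\sigma^\wedge(x+iy)}\longrightarrow 0\ \text{ for }\ \sigma_s\text{-a.e. }x ,
\end{equation}
invoked with $\sigma=\mu_s$ and $\tau$ equal to $\mu_{ac}$, resp.\ to the positive and negative parts of $f\mu_{ac}$. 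For singular $\mu$, linearity reduces the statement to indicators $f=\chi_A$, where $(\chi_A\mu)^\wedge/\mu^\wedge=(\mu|_A)^\wedge\big/\big((\mu|_A)^\wedge+(\mu|_{A^c})^\wedge\big)$; since $\mu|_A\perp\mu|_{A^c}$ and $\mu_s=(\mu|_A)_s+(\mu|_{A^c})_s$, two applications of \eqref{star} give the limit $\chi_A(x)$. General $f\in L^1(\mu)$ is then reached from simple $f$ by approximation, for which one needs a weak-type maximal estimate $\mu_s\{\,x:\sup_{y>0}|(g\mu)^\wedge(x+iy)/\mu^\wedge(x+iy)|>\lambda\,\}\le C\lambda^{-1}\|g\|_{L^1(\mu)}$, which I would prove by a Calder\'on--Zygmund / good-$\lambda$ argument adapted to $\mu$.

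The real content is \eqref{star}. Here the naive estimate fails: bounding $|\tau^\wedge(x+iy)|\le\int\frac{d\tau(t)}{|t-x-iy|}$ from above and $|\sigma^\wedge(x+iy)|\ge\operatorname{Im}\sigma^\wedge(x+iy)$ from below is too crude, because when $\sigma$ is only barely singular and $\tau$ only barely $\sigma$-absolutely continuous (say $\tau(x-h,x+h)/\sigma(x-h,x+h)$ decays only logarithmically at a non-density point of $\sigma$), the two half-potentials blow up at comparable rates and one must exploit the cancellation in $\tau^\wedge$. The device for that is Clark theory. For singular $\sigma$ the Cayley transform $b=(F_\sigma-i)(F_\sigma+i)^{-1}$ is inner (because $\operatorname{Im}F_\sigma=0$ a.e.), and the rank-one family $F_\gamma=F_\sigma(1+\gamma F_\sigma)^{-1}$, $\gamma\in\R$, with spectral measures $\mu_\gamma$ and $\mu_0=\sigma$, is precisely the Clark family of $b$: the singular parts $(\mu_\gamma)_s$ are mutually singular and carried by the level sets $\{x:F_\sigma(x)=-1/\gamma\}$ (Aronszajn--Donoghue), the spectral averaging identity $\int_\R\mu_\gamma(E)\,d\gamma=|E|$ disintegrates the family over Lebesgue measure, and each normalized Cauchy transform $V_\gamma g=(g\mu_\gamma)^\wedge/(\mu_\gamma)^\wedge$ is a unitary map of $L^2(\mu_\gamma)$ onto the model space $K_b$. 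The plan is to combine the unitarity with the disintegration to get $V_\gamma g(x+iy)\to g(x)$ at $(\mu_\gamma)_s$-a.e.\ $x$ first for \emph{Lebesgue-almost every} $\gamma$ --- this is where the $L^2$ model-space analysis (a square-function / Plessner-type argument) does the real work --- and then to promote this to \emph{every} $\gamma$, in particular $\gamma=0$, i.e.\ to $\sigma$ itself, using the rigidity of the Aronszajn--Donoghue level sets together with a weak-$\ast$ / normal-families argument.

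I expect the genuine obstacle to be precisely this last promotion, from ``almost every parameter'' to ``the given measure'': the $L^2$/disintegration argument is naturally quantified over the perturbation parameter, and extracting the pointwise $\sigma_s$-a.e.\ conclusion for one fixed $\sigma$ is what forces one to invoke the structure theory of the perturbation family. A secondary, self-contained difficulty is the weak-type maximal inequality used in the approximation step. By contrast, the reductions to finite, positive, and then singular $\mu$ are bookkeeping once \eqref{star} and that maximal bound are in hand, so essentially all the difficulty sits in the regime of \eqref{star} where $\tau^\wedge$ and $\sigma^\wedge$ grow comparably fast.
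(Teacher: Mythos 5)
First, a point of comparison: the paper does not prove this statement at all. It is Poltoratski's theorem, imported verbatim from \cite{Pol} (see also \cite{JL}) and used as a black box, so there is no internal proof to measure yours against. Your outline is nevertheless recognizably the right circle of ideas: the reduction to positive, finite, singular $\mu$; the equivalence of the indicator case with your statement $(\star)$ that $\tau^\wedge/\sigma^\wedge\to 0$ $\sigma_s$-a.e.\ when $\tau\perp\sigma$ (which is exactly Proposition \ref{P2.1} of the paper, there \emph{deduced from} the theorem rather than used to prove it); and the Clark/rank-one machinery, Aronszajn--Donoghue and spectral averaging, which are the engines of both known proofs (Poltoratski's original model-space argument and the Jaksic--Last proof).

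As a proof, however, the proposal has two genuine gaps. The essential one you name yourself: all of the analytic content sits in $(\star)$, and your plan for it --- obtain convergence for Lebesgue-a.e.\ parameter $\gamma$ from the unitarity of $L^2(\mu_\gamma)\to K_b$ plus spectral averaging, then ``promote'' to the fixed parameter $\gamma=0$ --- stops exactly where the difficulty begins. The singular parts $(\mu_\gamma)_s$ are mutually singular, so an almost-every-$\gamma$ statement says nothing about $\gamma=0$ without a quantitative, $\gamma$-uniform input; no mechanism for this promotion is supplied, and without it the theorem is not proved. Secondly, the weak-type maximal bound $\mu_s\{x:\sup_{y}|F_{g\mu}(x+iy)/F_\mu(x+iy)|>\lambda\}\lesssim\lambda^{-1}\|g\|_{L^1(\mu)}$ that you invoke to pass from simple to general $f$ is not a routine Calder\'on--Zygmund exercise ``adapted to $\mu$'': it is itself a deep theorem of Poltoratski (J.\ Amer.\ Math.\ Soc.\ 16 (2003)), arguably harder than the a.e.\ convergence statement. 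It is also avoidable: once the theorem is known for bounded $f$, the case $f\in L^1(\mu)$, $f\ge 0$, follows by applying it to the bounded density $g=(1+f)^{-1}=d\mu/d\nu$ of $\mu$ with respect to $d\nu=(1+f)\,d\mu$ and inverting, since $\nu_s$ and $\mu_s$ are mutually absolutely continuous and $F_{f\mu}/F_\mu=F_\nu/F_\mu-1$. So the reductions can be repaired, but the core analytic step remains missing.
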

A clarifying comment is in order: Given $\nu$, the function $F_{\nu}$ is
of course not completely determined yet (we don't know $a$, $b$). This, however,
is not an issue here; the statement from the Theorem holds for \textit{all }such
functions. This follows because $|F_{\mu}(x+iy)|\to\infty$ as $y\to 0+$ for
$\mu_s$-almost every $x\in\R$. In Theorem \ref{TPol}, we of course implicitly assume
that $1/(t^2+1)$ is integrable for all measures involved here.
See also \cite{CMT,JL} for further discussion of this theorem.

We will also use the following consequence of Theorem \ref{TPol}.
\begin{Proposition}
\label{P2.1}
Suppose that $\rho=\rho_s$ and $\sigma\perp\rho$. Then
\[
\lim_{y\to 0+} \frac{F_{\sigma}(x+iy)}{F_{\rho}(x+iy)} = 0
\]
for $\rho$-almost every $x\in\R$.
\end{Proposition}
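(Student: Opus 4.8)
The plan is to reduce Proposition~\ref{P2.1} to Theorem~\ref{TPol} by passing to the combined measure $\nu=\rho+\sigma$. Since $\sigma\perp\rho$, I would first fix a Borel set $A\subset\R$ with $\rho(\R\setminus A)=0$ and $\sigma(A)=0$. Then, as a standard computation with the definitions shows, $\rho=\chi_A\nu$ (and $\sigma=\chi_{\R\setminus A}\nu$). Moreover $\nu_s=\rho+\sigma_s$ because $\rho=\rho_s$, so $\rho\ll\nu_s$, and hence any assertion that holds $\nu_s$-almost everywhere in particular holds $\rho$-almost everywhere. The integrability requirement $\int d\nu(t)/(t^2+1)<\infty$ is inherited from the corresponding (implicit) assumptions on $\rho$ and $\sigma$.

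Next I would apply Theorem~\ref{TPol} with $\mu=\nu$ and the bounded function $f=\chi_A$, which gives
\[
\lim_{y\to 0+}\frac{F_{\rho}(x+iy)}{F_{\nu}(x+iy)}=\chi_A(x)
\]
for $\nu_s$-almost every $x$, hence for $\rho$-almost every $x$. Since $\rho(\R\setminus A)=0$, for $\rho$-almost every $x$ we have $x\in A$, so the right-hand side equals $1$. As noted in the discussion following Theorem~\ref{TPol}, $|F_{\nu}(x+iy)|\to\infty$ as $y\to0+$ for $\nu_s$-almost every (hence $\rho$-almost every) $x$, so all the ratios here are well defined for small $y>0$ and the statement is independent of the choice of the constants $a$, $b$ in the representation~\eqref{Fmu}.

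Finally, I would choose the constants so that $F_{\nu}=F_{\rho}+F_{\sigma}$ (take $a_\nu=a_\rho+a_\sigma$, $b_\nu=b_\rho+b_\sigma\ge0$). Then, for $\rho$-almost every $x$ and all small $y>0$,
\[
\frac{F_{\sigma}(x+iy)}{F_{\rho}(x+iy)}=\frac{F_{\nu}(x+iy)}{F_{\rho}(x+iy)}-1 ,
\]
and since $F_{\rho}/F_{\nu}\to 1\neq 0$ the reciprocal satisfies $F_{\nu}/F_{\rho}\to 1$ (this also forces $|F_{\rho}(x+iy)|\to\infty$, so the left-hand quotient is legitimate). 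Letting $y\to0+$ yields $F_{\sigma}/F_{\rho}\to0$, as claimed. I do not expect a genuine obstacle here: the only points that require care are the measure-theoretic identity $\rho=\chi_A\nu$, the passage from ``$\nu_s$-a.e.'' to ``$\rho$-a.e.'' via $\rho\ll\nu_s$, and the verification that the limiting denominator is nonzero so that a quotient of limits is valid.
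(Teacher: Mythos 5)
Your proposal is correct and takes essentially the same route as the paper: both reduce the claim to Theorem \ref{TPol} applied to the sum measure $\mu=\rho+\sigma$ with characteristic-function densities $\chi_A$, $\chi_{A^c}$, using $\rho=\rho_s\ll\mu_s$ to transfer the $\mu_s$-a.e.\ conclusion to a $\rho$-a.e.\ one. The only cosmetic difference is in the final algebra: the paper factors $F_{\sigma}/F_{\rho}=\bigl(F_{\chi_{A^c}\mu}/F_{\mu}\bigr)\bigl(F_{\mu}/F_{\chi_A\mu}\bigr)$ and invokes the theorem twice, while you invoke it once and finish with the additive identity $F_{\nu}=F_{\rho}+F_{\sigma}$; both are valid.
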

\begin{proof}
Pick a Borel set $T\subset\R$ with $\rho(T^c)=\sigma(T)=0$, and abbreviate
$\rho+\sigma=\mu$. Then
\[
\frac{F_{\sigma}}{F_{\rho}} = \frac{F_{\chi_{T^c}\mu}}{F_{\chi_T\mu}}
= \frac{F_{\chi_{T^c}\mu}}{F_{\mu}}\, \frac{F_{\mu}}{F_{\chi_T\mu}} ,
\]
and $F_{\chi_{T^c}\mu}/F_{\mu}\to \chi_{T^c}$ $\mu_s$-almost everywhere by Theorem \ref{TPol}.
In particular, this ratio goes to zero $\rho$-almost everywhere. Similarly,
$F_{\chi_T\mu}/F_{\mu}\to 1$ $\rho$-almost everywhere, so the Proposition follows.
\end{proof}
\begin{proof}[Proof of Theorem \ref{T1.1}]
Let $\mu\in\mathcal R(E)$. Write $F_{\mu}$ for the associated Herglotz
function $F_{\mu}\in\mathcal N(E)$, as in \eqref{Fmu}, and
let $\xi$ be the Krein function of $F_{\mu}$, that is,
\[
\xi(x) = \frac{1}{\pi} \lim_{y\to 0+} \textrm{Im}\, \ln F_{\mu}(x+iy) ,
\]
where we take the logarithm with $0<\textrm{Im}\, \ln w<\pi$ for $w\in\C^+$.
Since $\ln F_{\mu}$ is a Herglotz function, the limit defining $\xi$ exists
almost everywhere and $0\le \xi(x)\le 1$.

If, conversely, a measurable function $\zeta$ with values in $[0,1]$ is given, then
$\zeta$ is the Krein function of some Herglotz function $G$. We can in fact recover
$\ln G$, up to an additive real constant, from $\zeta$, using the Herglotz representation
of $\ln G$. Here, we make use of the fact that since $\ln G$ has bounded imaginary part,
the associated measure is purely absolutely continuous.

The condition that $F_{\mu}\in\mathcal N(E)$ means that $\xi=1/2$ (almost everywhere) on $E$.
Given an arbitrary function $\theta\in L_{\infty}(\R)$,
with $-1\le\theta\le 1$ and $\theta=0$ on $E^c$,
we can therefore introduce two new Krein functions $\xi_{\pm}$, as follows:
\[
\xi_{\pm}(x) = \xi(x) \pm \frac{1}{2}\theta(x)
\]
As just explained, this also defines two new Herglotz functions $F_{\pm}$,
up to multiplicative constants.
We fix these constants by demanding that $|F_{\pm}(i)|=|F_{\mu}(i)|$. Call the measures
associated with these functions $\mu_+$ and $\mu_-$, respectively.
Since $\xi = (\xi_+ +\xi_-)/2$, we then have that
\begin{equation}
\label{Fpm}
F_{\mu} = \sqrt{ F_{\mu_+} F_{\mu_-}} .
\end{equation}

Our first aim is to show that
\begin{equation}
\label{mupmac}
\mu_s\ll \mu_{\pm} .
\end{equation}
Suppose this were wrong and write
\[
\mu_s = g\mu_{+,s}+ \nu ,
\]
with $\nu\perp \mu_{+,s}$, $\nu\not= 0$. We can then find
a Borel set $T$ so that $\nu(T)>0$, $\mu_{+,s}(T)=\nu(T^c)=|T|=0$. Theorem \ref{TPol} now
shows that
\[
\frac{F_{\nu}(x+iy)}{F_{\mu_s}(x+iy)} = \frac{F_{\chi_T\mu_s}(x+iy)}{F_{\mu_s}(x+iy)} \to 1
\]
for $\mu_s$-almost every $x\in T$, and, similarly,
\[
\frac{F_{\nu}(x+iy)}{F_{\mu_++\nu}(x+iy)} =
\frac{F_{\chi_T(\mu_++\nu)}(x+iy)}{F_{\mu_++\nu}(x+iy)} \to 1
\]
for $(\mu_{+,s}+\nu)$-almost every $x\in T$ and thus also for $\mu_s$-almost every $x\in T$.
Put differently, this means that
\[
\frac{F_{\mu_+}(x+iy)}{F_{\nu}(x+iy)} \to 0
\]
for $\mu_s$-almost every $x\in T$. So on a set of positive $\mu_s$-measure,
\begin{equation}
\label{2.1}
\frac{F_{\mu_+}(x+iy)}{F_{\mu_s}(x+iy)} =
\frac{F_{\mu_+}}{F_{\nu}} \frac{F_{\nu}}{F_{\mu_s}} \to 0 .
\end{equation}
We also have that for $\mu_s$-almost every $x\in\R$,
\begin{equation}
\label{2.2}
\sup_{0<y\le 1} \left| \frac{F_{\mu_-}(x+iy)}{F_{\mu_s}(x+iy)} \right| < \infty .
\end{equation}
This follows quickly from Proposition \ref{P2.1} with $\rho=\mu_s$ if we write
$\mu_-=h\mu_s+\sigma$, with $\sigma\perp\mu_s$. Indeed, $F_{\mu_-}/F_{\mu_s}\to h$
at $\mu_s$-almost every point by the Proposition and Theorem \ref{TPol}, and
$h<\infty$ $\mu_s$-almost everywhere.

Finally, Theorem \ref{TPol} also implies that
\[
\lim_{y\to 0+} \frac{F_{\mu_s}(x+iy)}{F_{\mu}(x+iy)} = 1
\]
for $\mu_s$-almost every $x\in\R$, and if this is combined with \eqref{2.1}, \eqref{2.2},
we obtain that
\[
\frac{\sqrt{F_{\mu_+}F_{\mu_-}}}{F_{\mu}} \to 0
\]
on a set of positive $\mu_s$-measure, but by \eqref{Fpm}, this ratio is
identically equal to one, so we reach a contradiction if \eqref{mupmac} fails.

Thus we can write
\[
\mu_s = f_{\pm}\mu_{\pm} = f_{\pm}\mu_{\pm , s} ,
\]
with $f_{\pm}\ge 0$ and in fact $0<f_{\pm}<\infty$ at
$\mu_s$-almost all points. By Theorem \ref{TPol},
\[
\lim_{y\to 0+} \frac{F_{\mu_s}(x+iy)}{F_{\mu_{\pm}}(x+iy)} \to f_{\pm}(x+iy)
\]
for $\mu_{\pm,s}$-almost every $x\in\R$ and thus also $\mu_s$-almost everywhere.
It follows that for $\mu_s$-almost every $x$,
\begin{equation}
\label{2.3}
\lim_{y\to 0+} \frac{F_{\mu_+}(x+iy)}{F_{\mu_-}(x+iy)} \quad
\textrm{exists and is positive;}
\end{equation}
in fact, this limit is equal to $f_-(x)/f_+(x)$ $\mu_s$-almost everywhere.

By definition of $\xi_{\pm}$, we have that $\xi_+-\xi_-=\theta$, so, if we introduce
\[
L(z) = \int_E \left( \frac{1}{t-z} - \frac{t}{t^2+1}\right)\theta(t) \, dt ,
\]
then $F_{\mu_+}/F_{\mu_-} = e^L$. Since $-1\le\theta\le 1$, we have that
$\textrm{Im }L(z)\in (-\pi,\pi)$ on $\C^+$, and thus \eqref{2.3} implies that for
$\mu_s$-almost every $x\in\R$,
\[
L(x)\equiv \lim_{y\to 0+} L(x+iy) \:\;\; \textrm{exists,} \quad
\textrm{Im }L(x)=0 .
\]
In particular, if we take $\theta=\chi_E$, then
\[
\textrm{Im }L(x+iy) = \int_E \frac{y}{(t-x)^2+y^2}\, dt \ge \frac{1}{2y}
\left| E \cap (x-y,x+y) \right| ,
\]
so part (a) of the Theorem follows. Part (b) is now also immediate, from the
fact that $\textrm{Re }L(x+iy)$ approaches a finite limit as $y\to 0+$.
\end{proof}
The set from part (a) of Theorem \ref{T1.1} contains $\overline{E}^c$,
so this result really addresses the question of whether $\mu$ can have a singular
part on $\overline{E}$.
In particular, it says that no such singular part can be present for
closed sets of the following type.
\begin{Definition}
\label{D2.1}
Call a Borel set $E\subset\R$
\textit{weakly homogeneous }if
\[
\limsup_{h\to 0+} \frac{1}{2h} \left| E\cap (x-h,x+h) \right| >0
\]
for all $x\in E$.
\end{Definition}
This condition is much weaker than the following, which
is used to define \textit{homogeneous }sets:
\[
\inf_{x\in E} \inf_{0<h\le 1} \frac{1}{2h} \left| E\cap (x-h,x+h) \right| >0 .
\]
From the work of Sodin-Yuditskii \cite{SodYud} it was previously known that if $E$ is
a compact (strongly) homogeneous set
and $\mu\in\mathcal R_0(E)$, then $\mu_s=0$. By using
Theorem \ref{T1.1}, we can go considerably beyond this:
\begin{Corollary}
\label{C2.1}
Suppose that $E$ is a weakly homogeneous set. If $\mu\in\mathcal R(E)$, then $\mu_s(E)=0$.
\end{Corollary}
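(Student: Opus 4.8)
The plan is to derive the Corollary directly from Theorem \ref{T1.1}(a); essentially all the work has already been done, and what remains is to observe that weak homogeneity is precisely the negation, on $E$, of the density condition appearing there.

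First I would recall that, by Theorem \ref{T1.1}(a), for any $\mu\in\mathcal R(E)$ the singular part $\mu_s$ is supported by
\[
S = \left\{ x\in\R : \lim_{h\to 0+} \frac{|E\cap (x-h,x+h)|}{2h} = 0 \right\} ,
\]
which by our conventions means $\mu_s(S^c)=0$. Hence it suffices to show that $E\subset S^c$, i.e.\ that no point of $E$ lies in $S$.

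Next I would fix $x\in E$ and invoke Definition \ref{D2.1}: weak homogeneity gives
\[
\limsup_{h\to 0+} \frac{1}{2h}\left| E\cap (x-h,x+h) \right| > 0 .
\]
If $x$ were in $S$, then $\lim_{h\to 0+}\frac{1}{2h}|E\cap(x-h,x+h)|$ would exist and equal $0$, forcing the $\limsup$ above to vanish as well, a contradiction. Therefore $x\notin S$. Since $x\in E$ was arbitrary, $E\cap S=\emptyset$, so $E\subset S^c$ and
\[
\mu_s(E) \le \mu_s(S^c) = 0 ,
\]
as claimed.

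The only point requiring a little care is the distinction between the limit and the limit superior: Theorem \ref{T1.1}(a) is phrased in terms of the ordinary (two-sided) Lebesgue density of $E$, whereas Definition \ref{D2.1} only asserts positivity of the upper density; but this suffices, since existence of the limit together with a positive $\limsup$ is already contradictory, and if the limit fails to exist the point simply is not in $S$. I do not expect any genuine obstacle in this argument — the substance is entirely contained in Theorem \ref{T1.1}(a), whose proof (via the Krein function and Theorem \ref{TPol}) is where the real difficulty lies.
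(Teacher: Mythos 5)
Your proposal is correct and coincides with the paper's own (implicitly stated) argument: Theorem \ref{T1.1}(a) supports $\mu_s$ on the set of points where $E$ has Lebesgue density zero, and weak homogeneity excludes every point of $E$ from that set. The handling of the limit versus limsup distinction is exactly right.
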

This is more general in two respects: $E$ is only assumed to be weakly homogeneous
(rather than homogeneous), and we can treat measures from $\mathcal R(E)$,
not just from $\mathcal R_0(E)$. This latter improvement, of course, can also be
obtained from the general principle that we formulated as Theorem \ref{T1.3}.

We now move on to proving
Theorem \ref{T1.2}. This will follow quickly from the following known characterization
of the point part of $\mu$ in terms of the Krein function $\xi$ of $F_{\mu}$.
See, for example, \cite[pg.\ 201]{MP}. We include a proof for the reader's convenience.
\begin{Lemma}
\label{L2.2}
$\mu(\{ x\})>0$ if and only if
\begin{equation}
\label{2.6}
\int_{x-1}^{x+1} \frac{|\xi(t)-\chi_{(x,\infty)}(t)|}{|t-x|}\, dt < \infty .
\end{equation}
\end{Lemma}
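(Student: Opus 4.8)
The plan is to extract everything from the exponential representation of $F_\mu$ that was recalled earlier: since $\ln F_\mu\in\mathcal H$ has bounded imaginary part, its linear term vanishes and its Herglotz measure equals $\xi(t)\,dt$, so
\[
\ln F_\mu(z) = A + \int_{-\infty}^{\infty}\left(\frac{1}{t-z} - \frac{t}{t^2+1}\right)\xi(t)\,dt ,\qquad A\in\R .
\]
I will also use the elementary fact that $\mu(\{x\}) = \lim_{y\to 0+} y\,\mathrm{Im}\,F_\mu(x+iy)$; this follows from \eqref{Fmu} by dominated convergence, the integrand $\frac{y^2}{(t-x)^2+y^2}$ being bounded by $1$ on $|t-x|\le 1$ and by $C(t^2+1)^{-1}$ on $|t-x|\ge 1$. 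So it suffices to control $|F_\mu(x+iy)|$ and $\arg F_\mu(x+iy)$ as $y\to 0+$ in terms of the integral in \eqref{2.6}, which I abbreviate by $J\in[0,\infty]$.

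Set $\eta = \xi - \chi_{(x,\infty)}$ on $(x-1,x+1)$. Then $|\eta|\le 1$, and, crucially, $(t-x)\eta(t)\le 0$ for a.e.\ $t\in(x-1,x+1)$: to the left of $x$, $\xi\ge 0$ forces $\eta\ge 0$, while to the right, $\xi\le 1$ forces $\eta\le 0$. Taking real parts in the representation above, splitting the integral at $|t-x|=1$, and writing $\xi = \chi_{(x,\infty)}+\eta$ on the inner piece, I obtain
\[
\ln|F_\mu(x+iy)| = -\ln y + I(y) + \rho(y),\qquad I(y) = \int_{x-1}^{x+1}\frac{(t-x)\eta(t)}{(t-x)^2+y^2}\,dt ,
\]
where the divergent term $-\ln y$ comes from $\int_x^{x+1}\frac{(t-x)\,dt}{(t-x)^2+y^2}$, and $\rho(y)$ collects all remaining contributions; since the combination $\frac{t-x}{(t-x)^2+y^2}-\frac{t}{t^2+1}$ decays like $t^{-2}$ uniformly for $y\le 1$, dominated convergence shows $\rho(y)\to\rho_0$ for some finite $\rho_0$. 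By the sign property the integrand of $I(y)$ is $\le 0$ and decreases as $y\downarrow 0$, so monotone convergence gives $I(y)\downarrow -J$. Therefore $y|F_\mu(x+iy)| = e^{I(y)+\rho(y)}\to e^{\rho_0-J}$, which is positive if and only if $J<\infty$.

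It remains to pass from $y|F_\mu(x+iy)|$ to $\mu(\{x\})$. If $J=\infty$, then $y|F_\mu(x+iy)|\to 0$, hence $y\,\mathrm{Im}\,F_\mu(x+iy)\to 0$ and $\mu(\{x\})=0$. If $J<\infty$, I look at the argument: taking imaginary parts in the representation, $\arg F_\mu(x+iy) = \int \frac{y\xi(t)}{(t-x)^2+y^2}\,dt = \frac{\pi}{2} + \int\frac{y\eta(t)}{(t-x)^2+y^2}\,dt$, and now the elementary bound $(t-x)^2+y^2\ge 2y|t-x|$ produces the integrable majorant $|\eta(t)|/(2|t-x|)$ on $|t-x|\le 1$ (the part $|t-x|\ge 1$ is $O(y)$), so that $\arg F_\mu(x+iy)\to \pi/2$. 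Hence $\mathrm{Im}\,F_\mu(x+iy)\sim |F_\mu(x+iy)|$ and $\mu(\{x\}) = e^{\rho_0-J}>0$. Combining the two cases proves the stated equivalence.

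I expect the main obstacle to be the bookkeeping in the identity for $\ln|F_\mu(x+iy)|$: one has to split the integral so that precisely one piece yields the divergent $-\ln y$, a second piece is handled by monotone convergence via the sign of $(t-x)\eta(t)$, and every remaining piece is shown to converge by exhibiting a $y$-uniform integrable majorant — for the tail this rests on the $t^{-2}$-decay of $\frac{t-x}{(t-x)^2+y^2}-\frac{t}{t^2+1}$. A secondary point worth isolating is that the hypothesis $J<\infty$ is used a second time, in the $J<\infty$ direction, to force $\arg F_\mu(x+iy)\to\pi/2$.
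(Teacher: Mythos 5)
Your proof is correct and follows essentially the same route as the paper's: both rest on the Krein-function (exponential Herglotz) representation of $\ln F_\mu$, the identification of $\mu(\{x\})$ with $\lim_{y\to 0+}y|F_\mu(x+iy)|$, and a monotone-convergence computation showing $\ln|F_\mu(x+iy)|+\ln y\to \mathrm{const}-J$. The only cosmetic difference is that you verify $\arg F_\mu(x+iy)\to\pi/2$ explicitly in the case $J<\infty$, whereas the paper sidesteps this step by using that $-iyF_\mu(x+iy)$ already converges to the nonnegative number $\mu(\{x\})$.
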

\begin{proof}
First of all, we can recover the point part as
\[
\mu(\{x \}) = -i \lim_{y\to 0+} y F_{\mu}(x+iy) ;
\]
this is well known and follows quickly from the dominated convergence theorem. So $\mu(\{ x\})>0$
if and only if
\begin{equation}
\label{2.5}
\limsup_{y\to 0+} \left( \textrm{\rm Re}\,\ln F_{\mu}(x+iy) + \ln y \right) > -\infty .
\end{equation}
To slightly simplify the notation, we will now assume that $x=0$. In terms of the Krein function $\xi$,
the expression from \eqref{2.5} equals
\[
\int_{-1}^1 \frac{t}{t^2+y^2}\, \xi(t)\, dt - \int_y^1 \frac{dt}{t} + O(1) \quad\quad (y\to 0+) .
\]
By monotone convergence,
\[
\int_{-1}^0 \frac{t}{t^2+y^2}\, \xi(t)\, dt \to \int_{-1}^0 \frac{\xi(t)}{t}\, dt
\]
(and, of course, this limit could equal $-\infty$). Also,
\begin{align*}
\int_0^1 \frac{t}{t^2+y^2}\, \xi(t)\, dt - \int_y^1 \frac{dt}{t} & =
\int_y^1 \frac{t(\xi(t)-1)}{t^2+y^2}\, dt - \int_y^1 \frac{y^2}{t(t^2+y^2)}\, dt \\
& \quad\quad
+ \int_0^y \frac{t\xi(t)}{t^2+y^2}\, dt \\
& = \int_y^1 \frac{t(\xi(t)-1)}{t^2+y^2}\, dt + O(1) ,
\end{align*}
and, by monotone convergence again,
\[
\int_y^1 \frac{t(\xi(t)-1)}{t^2+y^2}\, dt \to \int_0^1 \frac{\xi(t)-1}{t}\, dt \ge -\infty .
\]
These calculations have shown that \eqref{2.5}, for $x=0$, holds if and only if
\[
\int_{-1}^0 \frac{\xi(t)}{|t|}\, dt + \int_0^1 \frac{1-\xi(t)}{t}\, dt < \infty ,
\]
as asserted by the Lemma.
\end{proof}
\begin{proof}[Proof of Theorem \ref{T1.2}]
Suppose that condition (b) from Theorem \ref{T1.2} fails. Put
\[
\xi(t) = \frac{1}{2} \chi_E(t) + \chi_{E^c\cap (x,\infty)}(t) .
\]
Let $F\in\mathcal H$ be the corresponding Herglotz function. Since $\xi=1/2$ on $E$,
we have that $F\in\mathcal N(E)$, but it is also clear that \eqref{2.6} holds,
so the corresponding measure has a point mass at $x$.

The converse is an immediate consequence of Theorem \ref{T1.1}(c), with
$\theta(t)=\textrm{sgn}(t-x)$. Furthermore, we can also obtain this statement
conveniently from Lemma \ref{L2.2}, as follows:
If $F_{\mu}\in\mathcal N(E)$, then $\xi=1/2$ almost everywhere on $E$, so
the integrand from \eqref{2.6} equals $1/(2|t-x|)$ on $E\cap (x-1,x+1)$ and thus
\eqref{2.6} can not hold if we have condition (b) from Theorem \ref{T1.2}.
\end{proof}
\begin{proof}[Proof of Theorem \ref{T1.3}]
Let $\nu\in\mathcal R(E)$. We claim that if $\mu\in\mathcal R_0(E)$,
then we must have that
\begin{equation}
\label{2.18}
\lim_{y\to 0+} \frac{F_{\mu}(x+iy)}{F_{\nu}(x+iy)} = 0
\end{equation}
for $\nu_s$-almost every $x\in\R$. Indeed, $\mu_s=0$, $\mu(E^c)=0$ by assumption, and,
as discussed above, the condition that
$\nu\in\mathcal R(E)$ forces the absolutely continuous part of $\nu$
to be equivalent to $\chi_E\, dt$ on $E$. So $\mu\ll\nu$, $\mu_s=0$,
and thus \eqref{2.18} follows
immediately from Theorem \ref{TPol}.

Starting from $\nu$, we will now construct a measure $\mu\in\mathcal R_0(E)$ for which
\eqref{2.18} cannot hold at any point $x\in E$. This will prove that $\nu_s(E)=0$, as claimed.

We will again work with the Krein functions; the following simple monotonicity property
is at the heart of the matter.
\begin{Lemma}
\label{L2.1}
For $\xi\in L_{\infty}(a,b)$, $0\le\xi\le 1$, and $x\notin [a,b]$, define
\[
I_x(\xi)= \int_a^b \frac{\xi(t)\, dt}{t-x} .
\]
Let $c=\int_a^b \xi(t)\, dt$. Then
\[
I_x(\xi) \le I_x\left( \chi_{(a,a+c)} \right) \quad \textrm{ for all } x\notin [a,b] .
\]
\end{Lemma}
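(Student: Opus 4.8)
The plan is to reduce the inequality to a statement about how mass distributed over an interval should be pushed toward the endpoint nearest to $x$. Observe that the kernel $t\mapsto 1/(t-x)$ is, for a fixed $x\notin[a,b]$, a monotone function of $t$ on $[a,b]$: if $x<a$ it is positive and decreasing, while if $x>b$ it is negative and decreasing; in either case it is strictly decreasing. Write $K(t)=1/(t-x)$. We want to show that among all measurable $\xi$ with $0\le\xi\le 1$ and $\int_a^b\xi=c$, the integral $\int_a^b K(t)\xi(t)\,dt$ is maximized by $\xi=\chi_{(a,a+c)}$. Since $K$ is decreasing, the mass should be concentrated where $K$ is largest, namely on the left end $(a,a+c)$; this is exactly a bathtub-type rearrangement inequality.

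The key steps I would carry out are: first, set $\eta=\chi_{(a,a+c)}-\xi$, so that $\int_a^b\eta(t)\,dt=0$, $\eta\ge 0$ on $(a,a+c)$ (where $\chi_{(a,a+c)}=1\ge\xi$), and $\eta\le 0$ on $(a+c,b)$ (where $\chi_{(a,a+c)}=0\le\xi$). Then
\[
I_x\bigl(\chi_{(a,a+c)}\bigr)-I_x(\xi)=\int_a^b K(t)\,\eta(t)\,dt
=\int_a^b \bigl(K(t)-K(a+c)\bigr)\,\eta(t)\,dt,
\]
where in the last equality I used $\int_a^b\eta=0$ to subtract the constant $K(a+c)$. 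Now on $(a,a+c)$ we have $t<a+c$, hence $K(t)\ge K(a+c)$ (as $K$ is decreasing), and there $\eta\ge 0$, so the integrand is nonnegative; on $(a+c,b)$ we have $t>a+c$, hence $K(t)\le K(a+c)$, and there $\eta\le 0$, so again the integrand is nonnegative. Therefore the whole integral is $\ge 0$, which is the claimed inequality. Finally one should note the degenerate cases $c=0$ and $c=b-a$ are trivial, and that when $x<a$ versus $x>b$ the monotonicity direction of $K$ is the same (decreasing), so no case split is actually needed beyond checking the sign of $K$ is irrelevant to the argument.

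I do not expect a serious obstacle here; the only thing to be careful about is the sign bookkeeping, i.e.\ making sure that ``$K$ decreasing'' genuinely holds in both the $x<a$ and $x>b$ regimes (it does: $K'(t)=-1/(t-x)^2<0$ throughout), and that the constant $K(a+c)$ may be replaced by any constant without changing the integral since $\eta$ has zero mean — choosing $K(a+c)$ is what makes the integrand pointwise nonnegative. One small point worth stating explicitly is that $a+c\le b$, so $\chi_{(a,a+c)}$ really is supported in $(a,b)$ and $K(a+c)$ is well defined; this is immediate from $c=\int_a^b\xi\le b-a$.
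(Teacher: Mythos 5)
Your argument is correct, and it is a cleaner and more self-contained route than the one in the paper. Both proofs rest on the same underlying fact — the kernel $K(t)=1/(t-x)$ is strictly decreasing on $[a,b]$ for every $x\notin[a,b]$, so the extremal configuration pushes all the mass to the left endpoint — but the executions differ. The paper first reduces to step functions by an $L_1$-density argument, then performs two successive rearrangements: on each interval of constancy where $\xi=s$ it replaces $\xi$ by the characteristic function of the left sub-interval of proportional length (an ``elementary argument'' left to the reader), and finally it passes to the non-increasing rearrangement. Your proof avoids both the approximation step and the unproved local replacement: writing $\eta=\chi_{(a,a+c)}-\xi$ and using $\int\eta=0$ to subtract the constant $K(a+c)$ makes the integrand $(K(t)-K(a+c))\eta(t)$ pointwise nonnegative, so the inequality drops out in one line for arbitrary measurable $\xi$. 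This is exactly the standard proof of the bathtub principle, and it also handles the sign issue for $x>b$ (where $K$ is negative) automatically, whereas the paper's appeal to rearrangement monotonicity requires the small extra observation that one may first subtract a constant from the kernel to make it nonnegative. Your closing remarks — that $a+c\le b$ so the comparison function is admissible, and that $K'<0$ in both regimes so no case split is needed — are the right points to make explicit.
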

\begin{proof}
It suffices to prove this for step functions $\xi$ because these are dense in $L_1$.
So assume that $\xi = \sum_{j=1}^N s_j \chi_{I_j}$, with disjoint intervals $I_j$.
If $(c,c+h)$ is such an interval of constancy of $\xi$ and $\xi=s$ on $(c,c+h)$, with $0<s<1$, then,
as an elementary argument shows, $I_x(\xi)$ will go up if we redefine $\xi$ on $(c,c+h)$ as
$\chi_{(c,c+sh)}$.
Use this procedure on all intervals of constancy. Since $I_x(\xi)$ clearly also increases if
we pass to the non-increasing rearrangement of $\xi$, we obtain the Lemma.
\end{proof}
Let $\xi$ be the Krein function of $F_{\nu}$, and, motivated by Lemma \ref{L2.1},
define $\xi_0$ as follows: $\xi_0=1/2$ on $E$, and if $(a,b)$ is one of the bounded components
of the open set $E^c$, set $\xi_0=\chi_{(a,a+c)}$ on $(a,b)$, where
$c=\int_a^b \xi\, dt$, as in the Lemma.
If $E^c$ has unbounded components, put $\xi_0=1$ (say) on these.
Notice that $\xi_0$ is the Krein function of a Herglotz function
$F_{\mu}$ whose associated measure satisfies $\mu\in\mathcal R_0(E)$. Indeed,
$\mu$ is reflectionless on $E$ because this property is equivalent to $\xi_0=1/2$ on $E$,
and $\mu(E^c)=0$ because $F_{\mu}(x)\equiv\lim_{y\to 0+} F_{\mu}(x+iy)$ exists and is real
at all points of $E^c$, except possibly at the jumps of $\xi_0$. However, these can't be
discrete points of $\mu$ either because in order for this to happen, $\xi_0$ would have
to jump from $0$ to $1$, not the other way around, by Lemma \ref{L2.2}.

Now fix $x\in E$ and look at $\ln |F_{\mu}/F_{\nu}|$. As $y\to 0+$,
\begin{equation}
\label{2.19}
\ln |F_{\mu}(x+iy)| - \ln |F_{\nu}(x+iy)| =
\int_{y<|t-x|\le 1} \frac{\xi_0(t)-\xi(t)}{t-x}\, dt + O(1) .
\end{equation}
Since $\xi=\xi_0=1/2$ on $E$, this set doesn't contribute to the integral. Moreover,
by Lemma \ref{L2.1} and construction of $\xi_0$, those components of $E^c$ that are
contained in the region of integration make non-negative contributions. This more or less
finishes the proof except that there might also be up to four truncated components
of $E^c$ contributing to the integral. Suppose for example that $(a,b)$ is such a component and
$x\le a<x+y<b\le x+1$. Suppose also, for simplicity, that $x=0$. We claim that then
\[
\int_y^b \frac{\xi_0(t)-\xi(t)}{t}\, dt \ge -1 .
\]
This follows because Lemma \ref{L2.1} says that this integral will only become smaller if we
replace the actual $\xi$ on $(y,b)$ by $\chi_{(y,y+h)}$, where again $h$ is chosen so that
the integral of $\xi$ over $(y,b)$ is left unchanged. A similar process was used to construct
$\xi_0$, so after this replacement, $\xi_0$ and $\xi$ are both characteristic functions of
an interval, and the interval of $\xi_0$ is not smaller than the one corresponding to $\xi$,
so the difference $\xi_0-\xi$ is zero, except perhaps on an interval of at most the size of
the truncated piece $(a,y)$, and this is obviously $\le y$.

Similar discussions of course apply to the other cases, so \eqref{2.19} is bounded below
as $y\to 0+$ and \eqref{2.18} cannot hold.
\end{proof}
\section{General Lyapunov exponents and density of states measures}
In this section, we define basic objects such as the density of states, Lyapunov exponents etc.\ in a
general setting. Usually, these quantities are considered only for Jacobi matrices that come
from an ergodic dynamical system, but pretty much the same setup also works in a general situation,
provided the limits are taken on suitable subsequences.

Let us make this precise. We are given a half line Jacobi matrix $J$ on
$\ell_2(\Z_+)$, where $\Z_+=\{1,2,\ldots \}$. It will be convenient to define $a(0)=1$.
For $z\in\C^+$, let $f_{\pm}(n,z)$ be the solutions of
\begin{equation}
\label{se}
a(n)f(n+1)+a(n-1)f(n-1)+b(n)f(n)=zf(n)
\end{equation}
that satisfy the initial conditions:
\begin{align*}
f_-(0,z) & = 0  & f_+(0,z) =1\quad \\
f_-(1,z) & = 1 & f_+(1,z) =-m_+(z)
\end{align*}
Here, $m_+(z)=\langle \delta_1, (J-z)^{-1} \delta_1 \rangle$ denotes the Titchmarsh-Weyl
$m$ function of $J$. Notice that $f_+\in\ell_2(\Z_+)$; put differently, $f_{\pm}$ are
in the domain of $J$ near $+\infty$ and $1$, respectively.
Of course, $f_-$ can be defined in this way for all $z\in\C$, and we will in fact
use this function for $z=t\in\R$ later on. Also, recall that if $z\in\C^+$,
then $f_{\pm}(n,z)\not= 0$ for all $n\ge 1$.

We write $f_{\pm}$ in polar coordinates:
\[
f_{\pm}(n,z) = R_{\pm}(n,z)e^{\mp i\varphi_{\pm}(n,z)}
\]
Here, we demand that $R_{\pm}>0$ and
\begin{equation}
\label{phi}
0<\varphi_{\pm}(n+1,z)-\varphi_{\pm}(n,z)<\pi .
\end{equation}
Moreover, the initial values are
\[
R_-(1,z)=1, \: \varphi_-(1,z)=0, \quad R_+(0,z)=1, \: \varphi_+(0,z) = 0 .
\]
These conditions can be satisfied because the functions
\begin{equation}
\label{mpm}
m_{\pm}(n,z) := \mp \frac{f_{\pm}(n+1,z)}{a(n)f_{\pm}(n,z)} =
\mp \frac{R_{\pm}(n+1,z)}{a(n)R_{\pm}(n,z)} e^{\mp i(\varphi_{\pm}(n+1,z)-\varphi_{\pm}(n,z))}
\end{equation}
are Herglotz functions (see \cite{Teschl}).

We are now ready to give the basic definitions. Define Herglotz functions $w_{\pm}^{(N)}$, $g_N$
and (probability) measures $d\nu_N$, $dk_N$, as follows:
\[
w_{\pm}^{(N)}(z) = \frac{1}{N} \sum_{n=1}^N \ln \left[ a(n)m_{\pm}(n,z) \right]
\]
More precisely, we again use the logarithm with $\textrm{Im}(\ln\zeta)\in (0,\pi)$ for
$\zeta\in\C^+$ here. It follows from \eqref{phi}, \eqref{mpm} that
\begin{align}
\label{w+}
w_+^{(N)}(z) & = \frac{1}{N} \ln R_+(N+1,z) + i \left( \pi - \frac{\varphi_+(N+1,z)}{N} \right) \\
\nonumber
& \quad\quad -\frac{1}{N}\ln R_+(1,z) + \frac{i}{N} \varphi_+(1,z) , \\
\label{w-}
w_-^{(N)}(z) & = \frac{1}{N} \ln R_-(N+1,z) + i\, \frac{\varphi_-(N+1,z)}{N} .
\end{align}
Next, let
\[
g_N(z) = \frac{1}{N} \sum_{n=1}^N \langle \delta_n, (J-z)^{-1} \delta_n \rangle =
\int_{-\infty}^{\infty} \frac{d\nu_N(t)}{t-z} ,
\]
where
\[
d\nu_N(t) = \frac{1}{N} \sum_{n=1}^N d\| E(t)\delta_n\|^2 .
\]
Here, $E$ denotes the spectral resolution of $J$. Finally, let $\lambda_n^{(N)}$
($n=1,\ldots, N$) be the eigenvalues of $J$ on $\ell_2\left(
\{ 1,\ldots, N \} \right)$ with boundary conditions
$u(0)=u(N+1)=0$, and put
\[
dk_N(t) = \frac{1}{N} \sum_{n=1}^N \delta_{\lambda_n^{(N)}} .
\]
The following pair of theorems describes basic properties of these quantities; of course,
Theorem \ref{T3.2} below is an analog of similar results for ergodic operators,
and the proof also proceeds along these well trodden paths. Analogs of other
familiar basic results (such as Thouless formula, support of the density of states etc.)
will be discussed later in this section.
\begin{Theorem}
\label{T3.1}
Fix a sequence $N_j\to\infty$, and consider the following statements:\\
{\rm (WP) }$w_+^{(N_j)}(z) \to w_+(z)$ uniformly on compact subsets of $\C^+$, for some
$w_+\in\mathcal H$.\\
{\rm (WM) }$w_-^{(N_j)}(z) \to w_-(z)$ uniformly on compact subsets of $\C^+$, for some
$w_-\in\mathcal H$.\\
{\rm (G) }$g_{N_j}(z)\to g(z)$ uniformly on compact subsets of $\C^+$, for some $g\in\mathcal H$.\\
{\rm (N) }$d\nu_{N_j}\to d\nu$ in weak $*$ sense, for some probability (Borel) measure $\nu$ on $\R$.\\
{\rm (K) }$dk_{N_j}\to d\widetilde{k}$ in weak $*$ sense, for some probability (Borel) measure
$d\widetilde{k}$ on $\R$.\\
{\rm (A) }$\left( a(1) \cdots a(N_j) \right)^{1/N_j}\to A$, for some $A>0$.\\

Then {\rm (WP)}$\iff${\rm (WM) }and {\rm (G)}$\iff${\rm (N)}$\iff${\rm (K). }Moreover,
{\rm (WP) }or {\rm (WM) }implies {\rm (G), (N), (K)}
and also {\rm (A). }Conversely, if {\rm (A) }holds, then each of {\rm (G), (N), (K)}
implies {\rm (WP), (WM).}
\end{Theorem}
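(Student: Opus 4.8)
The plan is to exploit the relations \eqref{w+}, \eqref{w-} together with the standard identities linking the transfer matrix, the $m$-functions, the Green function and the finite-volume eigenvalue counting measure. First I would record the elementary algebraic links. From \eqref{mpm} one has $a(n)m_\pm(n,z) = -f_\pm(n+1,z)/f_\pm(n,z)$, so summing the logarithms telescopes and gives the closed forms \eqref{w+}, \eqref{w-}; comparing them shows at once that $w_+^{(N)} - w_-^{(N)}$ depends on the solutions only through $\frac1N\ln|f_+(N+1)/f_-(N+1)|$ and the phase difference, which are controlled by the Wronskian $W(f_+,f_-) = a(0) = 1$ and by $m_+(z)$. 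Since the Wronskian is constant and $m_+$ is a fixed Herglotz function, $w_+^{(N)}$ and $w_-^{(N)}$ differ by a sequence that converges (uniformly on compacts of $\C^+$) without any hypothesis; this is how I would get (WP)$\iff$(WM). For the link with $g_N$ and $\nu_N$, recall the classical formula $g_n(z) = \langle\delta_n,(J-z)^{-1}\delta_n\rangle = -1/(a(n)m_+(n,z)+1/(a(n-1)m_-(n,z))^{-1}\cdots)$ — more usefully, $\partial_z$ of $\ln\det(J_{[1,N]}-z)$ equals $-\sum_{n=1}^N g_n^{(N)}(z)$, which ties $dk_N$ to $g_N$ directly: $g_N(z) = \int dk_N(t)/(t-z) + o(1)$ uniformly on compacts of $\C^+$, because replacing the half-line resolvent diagonal by the box resolvent diagonal changes each term by a quantity summable after division by $N$ (a rank-two decoupling estimate, using the two-sided bounds on $a,b$). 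That gives (G)$\iff$(N)$\iff$(K) via the Stieltjes–Grommer–Hamburger correspondence between locally uniform convergence of Herglotz functions and weak$*$ convergence of their measures (all the $\nu_N$, $k_N$ are probability measures supported in a fixed compact, so the family is tight and uniform boundedness on compacts of $\C^+$ is automatic).

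Next I would relate the two groups. The key analytic fact is the Thouless-type formula: $\mathrm{Re}\,w_\pm^{(N)}(z) = \frac1N\ln|f_\pm(N+1,z)| + O(1/N)$, and $\frac1N\ln\|T_N(z)\|$ (the norm of the $N$-step transfer matrix) equals $\int \ln|t-z|\,dk_N(t) - \ln(a(1)\cdots a(N))^{1/N} + o(1)$, by expressing $\det(J_{[1,N]}-z) = (a(1)\cdots a(N))\cdot(\text{polynomial with roots }\lambda_j^{(N)})$ and identifying that polynomial with $f_-(N+1,z)$ up to the product of the $a$'s. Taking imaginary parts of the logarithm recovers the phase $\varphi_\pm(N+1,z)/N$ as a harmonic conjugate, i.e. as $\pi\int$ of the same measure. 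Concretely: $w_-^{(N)}(z) = \int\ln\!\big(\frac{1}{t-z}\big)\,dk_N(t) \;+\; \text{(something involving }\tfrac1N\ln(a(1)\cdots a(N))\text{)} \;+\; o(1)$, with the logarithm branch fixed as in the statement. This single identity is the hub: it shows (WM) $\Rightarrow$ (K) and (A) simultaneously (split $w_-^{(N)}$ into the potential of $k_N$ and the constant $\frac1N\ln\prod a$; the left side converges, and I must argue the two pieces converge separately — here I would use that $\mathrm{Im}\,w_-^{(N)}(z) = \pi\int(\text{phase density})$ pins down the measure part, forcing the constant part to converge too). Conversely, if (A) holds and, say, (K) holds, then the right side of the hub identity converges, hence $w_-^{(N)}$ converges, giving (WM) and then (WP); and (G)$\iff$(K) was already established, so each of (G),(N),(K) plus (A) yields (WP),(WM).

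The step I expect to be the main obstacle is the \emph{separation of the two contributions} $\int\ln|t-z|\,dk_N$ and $\ln(a(1)\cdots a(N))^{1/N}$ inside $w_-^{(N)}$ when only (WM) is assumed. Locally uniform convergence of the sum on $\C^+$ does not formally imply convergence of each summand, so I would extract the constant by evaluating an appropriate difference that kills the potential part — e.g. use that $\mathrm{Im}\,w_-^{(N)}(z) = \varphi_-(N+1,z)/N$ is, up to $o(1)$, $\pi$ times the $k_N$-mass of a half-line depending on $z$, so convergence of $\mathrm{Im}\,w_-^{(N)}$ forces $k_N \to \widetilde k$ weak$*$ (monotone functions, Helly), and then $\int\ln|t-z|\,dk_N \to \int\ln|t-z|\,d\widetilde k$ by dominated convergence (the log singularity is integrable and $z\in\C^+$ keeps it bounded), leaving $\frac1N\ln\prod_{n=1}^N a(n)$ as the difference of two convergent sequences, hence convergent — this is (A). A parallel but slightly more delicate point is that $w_+^{(N)}$ carries the extra term $\frac1N\ln R_+(N+1,z) = \frac1N\ln|f_+(N+1,z)|$, which is the \emph{Lyapunov exponent} direction; controlling it requires the Wronskian relation $f_+(n+1)f_-(n) - f_+(n)f_-(n+1) = -1/a(n)$ (so that $f_+ = f_-\cdot(\text{Weyl combination})$ with explicitly bounded correction), and it is here that the two-sided bounds $(C+1)^{-1}\le a(n)\le C+1$ are essential to keep all the $O(1/N)$ error terms genuinely uniform on compacts of $\C^+$. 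Once these uniformity estimates are in hand, all the implications in the theorem follow by chasing the hub identity and invoking the Herglotz–measure correspondence.
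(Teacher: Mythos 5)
Your overall architecture is essentially the paper's: the exact ``hub identity'' $w_-^{(N)}(z)=\int\ln(z-t)\,dk_N(t)-\frac1N\ln(a(1)\cdots a(N))$, coming from $f_-(N+1,z)=(a(1)\cdots a(N))^{-1}\prod_j(z-\lambda_j^{(N)})$, is exactly how the paper derives the Thouless formula and identifies the constant in the Herglotz representation of $w_+$ with $\ln A$; the Herglotz--measure correspondence for (G)$\iff$(N)$\iff$(K) and the Wronskian for relating $w_+$ to $w_-$ are likewise the paper's moves. The one genuinely different detail is (N)$\iff$(K): you compare $\frac1N\textrm{tr}\,P_N(J-z)^{-1}P_N$ with $\frac1N\textrm{tr}\,(J_N-z)^{-1}$ by a rank-two resolvent estimate, where the paper compares moments $\frac1N\textrm{tr}\,J_N^n$ with $\frac1N\sum_{j\le N}\langle\delta_j,J^n\delta_j\rangle$; both work.

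There is, however, a genuine gap in your (WP)$\iff$(WM) step. You assert that $w_+^{(N)}-w_-^{(N)}$ ``converges without any hypothesis'' because of the Wronskian. This is backwards: by \eqref{w+}--\eqref{w-}, $\textrm{Re}\,(w_+^{(N)}-w_-^{(N)})=\frac1N\ln\bigl(R_+(N+1,z)/R_-(N+1,z)\bigr)+O(1/N)$, which equals $\frac1N\ln(R_+R_-)-\frac2N\ln R_-(N+1,z)+O(1/N)$, i.e.\ essentially $-2\gamma_N(z)$ --- the very quantity whose convergence is equivalent to (WM). What the constancy of $W(f_+,f_-)=1$ controls is the \emph{sum}: $R_+(n,\cdot)R_-(n,\cdot)=\bigl(a(n)^2|m_+(n,\cdot)+m_-(n,\cdot)|\bigr)^{-1}$ is bounded above and below on compacts of $\C^+$, whence $\textrm{Re}\,(w_+^{(N)}+w_-^{(N)})\to0$. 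Even granting that, the imaginary part $\pi-\frac1N(\varphi_+-\varphi_-)(N+1,z)$ is not yet under control: the Wronskian determines $\varphi_--\varphi_+$ only mod $2\pi$, so a priori different subsequences could give $w_++w_-\equiv iB\pi$ with different constants $B$, and then (WP) would not determine $w_-$. The paper closes exactly this hole with oscillation theory on the real line (the phases count sign changes of $f_\mp$, and any two nontrivial solutions of \eqref{se} have sign-change counts differing by at most one, forcing $B=1$); alternatively one can track the branch of $\varphi_--\varphi_+$ directly in $\C^+$, using that consecutive increments lie in $(-\pi,\pi)$ while $e^{i(\varphi_--\varphi_+)}$ stays in the open lower half of the unit circle. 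Some such argument must be supplied: since your hub identity lives entirely on the $f_-$ side, the implication (WP)$\Rightarrow$(G),(N),(K),(A) rests on (WP)$\Rightarrow$(WM), and as written that step fails.
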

Note that we will always be able to achieve convergence on suitable subsequences;
for example, we can use the Banach-Alaoglu Theorem in conditions {\rm (N), (K) }if we also
observe that the measures $d\nu_N$, $dk_N$ have supports contained in a fixed compact set.

It is natural to ask if perhaps the first five conditions are all equivalent to each other;
unfortunately, we have not been able to clarify this.
\begin{Theorem}
\label{T3.2}
Let $N_j$ be a sequence so that the conditions from Theorem \ref{T3.1} hold. Introduce
$\gamma, k: \C^+\to\R$ by writing $w_+$ as
\[
w_+(z) = -\gamma(z) + i\pi k(z) .
\]
Then $k(t)\equiv\lim_{y\to 0+} k(t+iy)$ exists for every $t\in\R$, $k(t)$ is an increasing
function, $-iw_+,w_+'\in\mathcal H$, and the following identities hold:
\begin{gather*}
w_+(z)+w_-(z) \equiv i\pi, \quad g(z)=w_+'(z), \quad d\widetilde{k}=d\nu=dk\\
g(z) = \int_{-\infty}^{\infty} \frac{dk(t)}{t-z}\\
w_+(z) = \ln A - \int_{-\infty}^{\infty} \ln (t-z)\, dk(t)
\end{gather*}
\end{Theorem}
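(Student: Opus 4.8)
The plan is to establish the six assertions in roughly the order listed, exploiting the fact that uniform convergence on compact subsets of $\C^+$ passes to derivatives and that the equivalences in Theorem \ref{T3.1} give us all of (WP), (WM), (G), (N), (K), (A) simultaneously. First I would record the telescoping identity: summing \eqref{mpm} over $n$ and using $\ln[a(n)m_+(n,z)]+\ln[a(n)m_-(n,z)]$, one checks from the polar decomposition that $w_+^{(N)}(z)+w_-^{(N)}(z) = i\pi$ exactly (not just in the limit), since the $\ln R_\pm$ and $\varphi_\pm$ contributions cancel against each other after telescoping and the surviving constant is the $i\pi$ coming from the branch convention in \eqref{w+}. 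Passing to the limit gives $w_+(z)+w_-(z)\equiv i\pi$; in particular $\operatorname{Im} w_+(z) = \pi - \operatorname{Im} w_-(z)$, and since $w_+\in\mathcal H$ has $\operatorname{Im}w_+>0$ while $w_-\in\mathcal H$ forces $\operatorname{Im}w_-< \pi$ (because $w_-$ is a uniform limit of $w_-^{(N)} = \frac1N\ln R_-(N+1) + \frac{i}{N}\varphi_-(N+1)$ whose imaginary part is $(\varphi_-(N+1)-\varphi_-(1))/N < \pi$ by \eqref{phi}), we get $0<\operatorname{Im}w_+ < \pi$, i.e. $\operatorname{Im}(-\gamma + i\pi k) = \pi\operatorname{Im}k\in(0,\pi)$, so $\pi k$ has imaginary part in $(0,\pi)$, which is exactly the statement that $\pi k$ is the logarithm-type Herglotz function whose boundary values $k(t)$ exist everywhere and are increasing (boundary values of $\operatorname{Im}$ of a bounded-imaginary-part Herglotz function exist everywhere after integration; more directly, $k(z)$ has nonnegative imaginary part so $k\in\mathcal H$ up to normalization and $k(t)=\lim k(t+iy)$ exists a.e., and monotonicity in $t$ on $\R$ for $y>0$ plus the limit gives monotonicity of $k(t)$; I would phrase this via $\pi k = \operatorname{Im}w_+$ being a positive harmonic function with the correct boundary behavior). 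The claim $-iw_+\in\mathcal H$ is immediate from $\operatorname{Im}(-iw_+) = -\operatorname{Re}(w_+) = \gamma \ge 0$ together with $-iw_+$ mapping $\C^+$ to $\C^+$ once we know $\gamma>0$ there; that $\gamma>0$ on $\C^+$ (strict) follows because $\gamma(z) = -\operatorname{Re}w_+(z) = \frac1N\ln(1/R_+(N+1,z))\cdot(\cdots)$ — more cleanly, $\gamma = -\lim \frac1N\ln|a(1)\cdots a(N)| + \lim\frac1N\ln\|$transfer matrix$\|$-type quantity, but the cleanest route is: $w_+'\in\mathcal H$ and $g = w_+'$ force $\operatorname{Im}w_+$ to be nonconstant, hence $\gamma$ nonconstant and harmonic, and $\gamma\ge 0$ with equality impossible on a set of positive measure.

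Second, I would prove $g(z) = w_+'(z)$. Here the key is that $w_+^{(N)}$ converges uniformly on compacts of $\C^+$, so $\frac{d}{dz}w_+^{(N)}(z)\to w_+'(z)$ uniformly on compacts; thus it suffices to show $\frac{d}{dz}w_+^{(N)}(z) = g_N(z)$, i.e. that $\frac{d}{dz}\ln[a(n)m_+(n,z)] = \frac{d}{dz}\ln m_+(n,z)$ sums to $\frac1N\sum_{n=1}^N\langle\delta_n,(J-z)^{-1}\delta_n\rangle$. This is the standard "$m'$-formula": differentiating the Riccati-type recursion for $m_+(n,z) = -f_+(n+1,z)/(a(n)f_+(n,z))$ one finds $\frac{d}{dz}\ln m_+(n,z) = \langle\delta_n,(J_+^{(n)}-z)^{-1}\delta_n\rangle$ where $J_+^{(n)}$ is the half-line restriction starting at site $n$; a short induction (or the standard Green's function identity for truncations) then shows the telescoped sum over $n=1,\dots,N$ collapses to $\sum_{n=1}^N g_n(z)$ with $g_n(z) = \langle\delta_n,(J-z)^{-1}\delta_n\rangle$. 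I expect this to be routine given \cite{Teschl}. Then (G) gives $g_N\to g$, so $w_+' = g$, and feeding $g = w_+'$ back shows $g\in\mathcal H$ is the Herglotz function with $\operatorname{Im}g = \partial_y(-\gamma) \cdot(\cdots)$; at this point $g(z) = \int dk(t)/(t-z)$ will follow once we identify the measure.

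Third, the measure identification $d\widetilde k = d\nu = dk$. That $d\nu = dk$ on a subsequence is the classical eigenvalue-counting versus resolvent-trace computation: $\operatorname{tr}(J_N - z)^{-1}/N = \int dk_N/(t-z)$ where $J_N$ is the $N\times N$ truncation, while $g_N(z) = \operatorname{tr}P_N(J-z)^{-1}P_N/N$, and the rank-two perturbation bound $|\operatorname{tr}(J_N-z)^{-1} - \operatorname{tr}P_N(J-z)^{-1}P_N| \le C/\operatorname{Im}z$ shows $g_N - \int dk_N/(t-z)\to 0$ uniformly on compacts of $\C^+$; hence their weak-$*$ limits agree, $d\nu = d\widetilde k$. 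Wait — I should be careful: $d\nu_N$ is defined via the spectral measures of the half-line operator $J$ itself, not a truncation, so $g_N(z) = \int d\nu_N/(t-z)$ holds by definition, and the content is $\int d\nu_N/(t-z) - \int dk_N/(t-z)\to 0$, which is exactly the rank-two estimate just described. So (G)$\iff$(N) is the correspondence between convergence of Herglotz functions and weak-$*$ convergence of their representing measures (Stieltjes–Perron), and (N)$\iff$(K) is the rank-two bound. Having pinned down $dk = d\nu = d\widetilde k$ and $g(z) = \int dk(t)/(t-z)$, the last identity $w_+(z) = \ln A - \int\ln(t-z)\,dk(t)$ follows by integrating $w_+' = g = \int dk(t)/(t-z)$ in $z$: an antiderivative of $1/(t-z)$ in $z$ is $-\ln(t-z)$ (consistent branch, since $t\in\R$ and $z\in\C^+$ means $t-z\in\C^-$, a slit plane where $\ln$ is holomorphic), so $w_+(z) = c - \int\ln(t-z)\,dk(t)$ for a constant $c$, and one evaluates $c$ by examining the behavior as $z\to i\infty$: $\ln(t-z) = \ln(-z) + \ln(1-t/z) = \ln(-z) + O(1/z)$, and comparing with the known asymptotics $w_+^{(N)}(z) = \frac1N\ln R_+(N+1,z) + i\pi + \cdots$ where $R_+(N+1,z)\sim |z|^?\cdot|a(1)\cdots a(N)|^{-1}$ as $|z|\to\infty$ (from the leading behavior of the solution $f_+$), one extracts $c = \ln A$ with $A = \lim(a(1)\cdots a(N))^{1/N}$, matching condition (A). The need to track the branch of $\ln$ and the precise large-$z$ asymptotics of $w_+$ — especially reconciling the constant $i\pi$ in \eqref{w+} with the $\int\ln(t-z)\,dk$ representation — is the one spot requiring genuine care.

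The main obstacle I anticipate is not any single deep step but rather the bookkeeping around branches of the logarithm and the normalization constants: getting $w_+ + w_- = i\pi$ exactly (with the right constant), getting $\gamma > 0$ strictly on $\C^+$ so that $-iw_+\in\mathcal H$ genuinely, and extracting $\ln A$ as the additive constant in the Thouless-type formula all hinge on correctly propagating the $\operatorname{Im}\ln\zeta\in(0,\pi)$ convention through \eqref{w+}, \eqref{w-}, \eqref{mpm} and through the large-$|z|$ asymptotics of $f_+$. Everything else — the equivalences (G)$\iff$(N)$\iff$(K) via Stieltjes inversion and the rank-two trace bound, the identity $g_N = \frac{d}{dz}w_+^{(N)}$ via the standard $m$-function derivative formula, and termwise differentiation of a locally uniformly convergent sequence of holomorphic functions — is standard and I would cite \cite{Teschl} and classical Herglotz theory freely.
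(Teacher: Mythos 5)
Your proposal founders at its very first step. The identity $w_+^{(N)}(z)+w_-^{(N)}(z)=i\pi$ does \emph{not} hold exactly: from \eqref{mpm}, $a(n)^2m_+(n,z)m_-(n,z)=-f_+(n+1,z)f_-(n+1,z)/\bigl(f_+(n,z)f_-(n,z)\bigr)$, and $f_+(n,z)f_-(n,z)$ is (up to the constant Wronskian) the diagonal Green function $\langle\delta_n,(J-z)^{-1}\delta_n\rangle$, which is not constant in $n$. So the real part of the sum telescopes to $\frac1N\ln\bigl|f_+(N+1,z)f_-(N+1,z)/(f_+(1,z)f_-(1,z))\bigr|$, which is not zero; showing it is $o(1)$ is precisely the content of the paper's Wronskian--compactness argument (pass to a further subsequence along which $a(N_j')$ and the Herglotz functions $m_\pm(N_j',\cdot)$ converge to nondegenerate limits, so that $R_+(N_j',z)R_-(N_j',z)\to\alpha(z)>0$). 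Even granting this, one only gets $w_++w_-\equiv iB\pi$ for some constant $B\ge0$; your bounds $\mathrm{Im}\,w_\pm\in(0,\pi)$ (which come from \eqref{phi}, \eqref{w+}, \eqref{w-}) give only $B\in[0,2]$. The paper pins down $B=1$ by oscillation theory on the real axis: $(1/\pi)\varphi_\mp(N+1,t)$ counts sign changes of $f_\mp(\cdot,t)$, any two solutions of the same equation have the same number of sign changes up to a bounded error, hence $(\varphi_+(N+1,t)-\varphi_-(N+1,t))/N\to0$ weakly, forcing $B=1$ (and yielding, as a byproduct, $\frac{1}{\pi N}d(-\varphi_-(N+1,t))=dk_N(t)$ and thus $d\widetilde k=dk$). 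None of this is in your proposal, and it is the core of the proof.

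A second error: the ``$m'$-formula'' $\frac{d}{dz}\ln m_+(n,z)=\langle\delta_n,(J_+^{(n)}-z)^{-1}\delta_n\rangle$ is false (already for the free Jacobi matrix, $m_+'/m_+\ne m_+$), so $\frac{d}{dz}w_+^{(N)}=g_N$ is not an exact identity. The correct exact identity in this circle of ideas attaches to the \emph{other} solution: $a(1)\cdots a(N)f_-(N+1,z)=\det(z-J_N)$ up to sign, whence $\frac{d}{dz}w_-^{(N)}(z)=-\int dk_N(t)/(t-z)$ (branch ambiguities disappear under differentiation); the paper exploits exactly this polynomial structure to identify the constant $\ln A$. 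Combined with $w_++w_-\equiv i\pi$ and a comparison of $g_N$ with $\int dk_N(t)/(t-z)$ one recovers $g=w_+'$, but only after the first gap is filled. On the positive side, two of your substitutions are sound and genuinely different from the paper: the rank-two resolvent/trace estimate showing $g_N(z)-\int dk_N(t)/(t-z)\to0$ is a valid replacement for the paper's moment computation proving $d\nu=dk$, and extracting the constant $\ln A$ from the $z\to i\infty$ asymptotics $w_+(z)=\ln A-\ln(-z)+o(1)$ (uniform in $N$ for bounded coefficients) is a legitimate alternative to the paper's evaluation via $\frac{1}{N_j}\ln|f_-(N_j+1,z)|\to\gamma(z)$. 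These do not compensate for the missing Wronskian and phase-counting arguments.
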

\begin{proof} The following proof will establish both Theorem \ref{T3.1} and
Theorem \ref{T3.2}. First of all, notice that if a sequence $N_j$ is given, we can
always pass to a subsequence $N'_j$ so that all the limits from Theorem \ref{T3.1}
exist on that subsequence. Indeed, as already pointed out above, this follows from
the Banach-Alaoglu Theorem in parts (N), (K), and
in parts (WP), (WM), (G), we can use a normal families argument.
Note that in this latter case, the limiting function could in principle be identically
equal to a constant $c\in\R\cup\{ \infty \}$ (rather than lie in $\mathcal H$). However,
this does not actually happen here; we will rule out this possibility in a moment.

Now fix a sequence $N_j\to\infty$ for which all limits from Theorem \ref{T3.1} exist.
As anticipated in the statement of Theorem \ref{T3.2}, define $\gamma$, $k$ by writing
\[
w_+(z) = -\gamma(z) + i\pi k(z) \quad\quad (z\in\C^+) .
\]
Since $R_+\in\ell_2$, \eqref{w+} shows that $\gamma(z)\ge 0$. Moreover, $0\le k(z)\le 1$
and in fact both inequalities are strict unless $w_+$ is a constant. The \textit{Wronskian}
of two functions $u,v$ on $\Z_+$ is defined as
\[
W(u,v)=a(n)\left( u(n)v(n+1)-u(n+1)v(n) \right) .
\]
If $u,v$ both solve the same equation \eqref{se}, then $W(u,v)$ is independent of $n$.
From \eqref{mpm}, we obtain that
\begin{multline*}
W(f_+,f_-)= a(n)^2 R_+(n,z)R_-(n,z) e^{i(\varphi_-(n,z)-\varphi_+(n,z))} \\
\times (m_+(n,z)+m_-(n,z)) .
\end{multline*}
On a suitable subsequence $N'_j$, the coefficients $a(N'_j)$ as well as the
Herglotz functions $m_{\pm}(N'_j,\cdot)$ will also
converge, and these latter limits have to be genuine Herglotz functions (not real constants)
because the associated measures are finite measures whose supports are contained
in a fixed compact set. It follows that
\[
R_+(N'_j,z)R_-(N'_j,z)\to \alpha(z)>0
\]
as $j\to\infty$, and thus, by \eqref{w+}, \eqref{w-},
$\textrm{Re }w_+(z) = -\textrm{Re }w_-(z)$. In other words, $w_++w_-$ is
a Herglotz function whose real part is identically equal to zero. Hence
$w_++w_-\equiv iB\pi$ for some $B\ge 0$, or, equivalently,
\begin{equation}
\label{3.1}
w_-(z) = \gamma(z)+i\pi (B-k(z)).
\end{equation}
We will now use oscillation theory to prove that $B=1$ here.

Write $k(t)\equiv \lim_{y\to 0+} k(t+iy)$ and
$\varphi_-(N,t)\equiv \lim_{y\to 0+} \varphi_-(N,t+iy)$; by general facts about Herglotz functions,
these limits exist for almost every $t\in\R$. Moreover, by combining \eqref{3.1} with
\eqref{w-}, we see that
\begin{equation}
\label{3.12}
(B-k(t))\, dt = \lim_{j\to\infty} \frac{\varphi_-(N_j+1,t)}{\pi N_j}\, dt ;
\end{equation}
the limit is in weak $*$ sense, and we are also using the fact that, since
the Herglotz functions we are currently discussing have bounded imaginary parts,
their associated measures are purely absolutely continuous. Similarly, \eqref{w+}
implies that
\begin{equation}
\label{3.14}
(1-k(t))\, dt = \lim_{j\to\infty} \frac{\varphi_+(N_j+1,t)}{\pi N_j}\, dt .
\end{equation}

Suppose now that $f_-(n,t)\not=0$ for $n=1,2,\ldots, N+1$. Notice that for fixed $N$,
this will fail only at finitely many $t\in\R$. So $R_-(n,t)>0$, and the continuity
of $f_-(n,\cdot)$ together with the normalization $\varphi_-(n+1,z)-\varphi_-(n,z)
\in (0,\pi)$ (for $z\in\C^+$) now imply the following:
\[
\varphi_-(n,t) = \lim_{y\to 0+} \varphi_-(n,t+iy)\:\; \textrm{exists, }
\quad \varphi_-(1,t)=0,
\]
and
\[
\varphi_-(n+1,t)-\varphi_-(n,t) = \begin{cases} 0 &\textrm{ if }
f_-(n+1,t)f_-(n,t) > 0 \\ \pi & \textrm{ if }f_-(n+1,t)f_-(n,t) < 0 \end{cases} .
\]
In particular, $(1/\pi)\varphi_-(N+1,t)$ equals the number of sign changes
of $f_-(\cdot, t)$ on $\{ 1,2, \ldots, N+1 \}$.

By oscillation theory \cite[Chapter 4]{Teschl}, $\varphi_-(N+1,\cdot)$ is a
decreasing function; the jumps occur at the zeros of $f_-(N+1,t)$, but these
are precisely the eigenvalues $\lambda_j^{(N)}$ of the problem on $\{ 1,2, \ldots, N\}$. Therefore,
\begin{equation}
\label{3.2}
\frac{1}{\pi N}\, d(-\varphi_-(N+1,t)) = dk_N(t) .
\end{equation}

Similar reasoning may be applied to $f_+$. If $m_+(t)\equiv\lim_{y\to 0+}m(t+iy)$
exists and $f_+(n,t)\not= 0$ for $n=1,2,\ldots, N+1$, then
$\varphi_+(n,t)\equiv \lim_{y\to 0+} \varphi_+(n,t+iy)$ exists, too, and
$\varphi_+(N+1,t)$ again essentially counts sign changes. More precisely,
$(1/\pi)\varphi_+(N+1,t)$ differs by at most $2$ from the number of sign
changes of $\textrm{Re }f_+(\cdot, t)$ on $\{ 1,2,\ldots, N\}$.

By oscillation theory again, any two non-trivial solutions to the same equation
have essentially the same number of sign changes; more precisely, the difference
is at most $1$ in absolute value. Thus
\[
\frac{\varphi_-(N+1,t)}{\pi N}\, dt - \frac{\varphi_+(N+1,t)}{\pi N}\, dt \to 0
\]
as $N\to\infty$, in the weak $*$ topology. By taking the limits on the sequence $N_j$
and recalling \eqref{3.12}, \eqref{3.14}, we now see that indeed $B=1$ in \eqref{3.1}.
We have established the first identity from Theorem \ref{T3.2}.

Let $\psi\in C_0^{\infty}(\R)$.
Then, by \eqref{3.2},
\begin{align*}
\frac{1}{\pi N_j} \int_{-\infty}^{\infty} \varphi_-(N_j+1,t) \psi'(t)\, dt &
= \frac{1}{\pi N_j} \int_{-\infty}^{\infty} \psi(t)\, d(-\varphi_-(N_j+1,t))\\
& \to \int_{-\infty}^{\infty} \psi(t)\, d\widetilde{k}(t)
= - \int_{-\infty}^{\infty} \psi'(t) \widetilde{k}(t)\, dt .
\end{align*}
Here, we define the measure $d\widetilde{k}$ as the limit from part (K)
of Theorem \ref{T3.1}; we then also obtain a corresponding increasing (and,
let's say: right-continuous) function $\widetilde{k}(t)=\int_{(-\infty,t]}d\widetilde{k}(s)$.

On the other hand, we see from \eqref{3.12} that this integral also converges to
$\int_{-\infty}^{\infty} \psi'(t) (1-k(t))\, dt$, so we deduce that
$\widetilde{k}(t)=k(t)+c$, for some constant $c\in\R$. In fact, it would be more cautious to say that
we obtain this relation off a set of Lebesgue measure zero
(at this point, the only thing we can say for sure is that $k(t)\equiv\lim_{y\to 0+} k(t+iy)$
has been defined almost everywhere, and $k$, $\widetilde{k}$ might have
discontinuities). We will see later that these precautions
are actually unnecessary: $c=0$ and $k(t)$ exists everywhere and is continuous.
In any event, we now know that $dk=d\widetilde{k}$.

Next, we analyze the Herglotz representation of $w_+$.
Recall again that $\textrm{Im }w_+$ is bounded, so the associated measure is
purely absolutely continuous and the Herglotz representation reads
\[
w_+(z) = C_0 + \int_{-\infty}^{\infty} \left( \frac{1}{t-z} - \frac{t}{t^2+1} \right)
k(t)\, dt .
\]
An integration by parts shows that
\begin{align*}
w_+(z) & = C_0 + \int_{-\infty}^{\infty} \frac{\partial}{\partial t}
\left[ \ln (t-z) - \frac{1}{2}\ln(t^2 + 1) \right] k(t)\, dt \\
& = C_0 + \lim_{R\to\infty} k(t)\ln\frac{t-z}{\sqrt{t^2+1}} \Bigr|_{t=-R}^{t=R} \\
& \quad\quad
-\int_{-\infty}^{\infty} \left[ \ln (t-z) - \frac{1}{2}\ln(t^2 + 1) \right]\, dk(t)\\
& = C - \int_{-\infty}^{\infty} \ln (t-z) \, dk(t) .
\end{align*}
Some comments on this calculation are in order: As we saw above, $dk$ is a compactly
supported finite
measure on $\R$. This also implies that the function $k(t)$ approaches finite limits
as $t\to\pm\infty$. Finally, as usual, $\textrm{Im}(\ln\zeta)\in (0,\pi)$
for $\zeta\in\C^+$, and if $\zeta\in\C^-$, we interpret $\ln\zeta = -\ln\zeta^{-1}$.

We in particular obtain the following pair of formulae:
\begin{align*}
w'_+(z) & = \int_{-\infty}^{\infty} \frac{dk(t)}{t-z} ,\\
\gamma(z) & = -C + \int_{-\infty}^{\infty}\ln |t-z|\, dk(t)
\end{align*}
This latter identity also lets us identify the constant $C$: Since
$dk_{N_j}\to dk$ in the weak $*$ topology and these (probability) measures have
supports contained in a fixed compact set, we see that for $z\in\C^+$,
\begin{align*}
\gamma(z) & = -C + \lim_{j\to\infty} \int_{-\infty}^{\infty}\ln |t-z|\, dk_{N_j}(t) \\
& = -C + \lim_{j\to\infty} \frac{1}{N_j} \ln \prod_{n=1}^{N_j}
\left| z-\lambda_n^{(N_j)} \right| \\
& = -C + \lim_{j\to\infty} \frac{1}{N_j} \ln \left| a(1)\cdots a(N_j) f_-(N_j+1,z)\right| .
\end{align*}
To pass to the second line, we just used the definition of $dk_{N_j}$, and
the last equality follows because $f_-(N_j+1,z)$ is a polynomial of degree $N_j$,
with leading term $z^{N_j}/(a(1)\cdots a(N_j))$ and zeros precisely at the $\lambda_n^{(N_j)}$. Now
\[
\frac{1}{N_j} \ln |f_-(N_j+1,z)| \to \textrm{Re }w_-(z)=\gamma(z) ,
\]
so it follows that $\lim (1/N_j)\ln (a(1)\cdots a(N_j))$ exists and equals $C$. Thus
\begin{align}
\label{thou}
\gamma(z) & = - \ln A + \int_{-\infty}^{\infty}\ln |t-z|\, dk(t) ,\\
\nonumber
w_+(z) & = \ln A - \int_{-\infty}^{\infty}\ln(t-z)\, dk(t) ,
\end{align}
where we have defined
\[
A = \lim_{j\to\infty} \left( a(1)\cdots a(N_j) \right)^{1/N_j} .
\]

Identity \eqref{thou} (the \textit{Thouless formula; }see also Corollary \ref{C3.1} below)
has a number of consequences, which we now develop. First of all,
\begin{equation}
\label{defgammat}
\gamma(t) \equiv \lim_{y\to 0+} \gamma(t+iy)
\end{equation}
exists for \textit{all }$t\in\R$. Indeed, since $dk$ is a finite measure of compact support,
\[
\lim_{y\to 0+} \int_{|s-t|>1/2} \ln |s-t-iy|\, dk(s)
\]
exists by dominated convergence. Moreover, monotone convergence shows that the
integrals over $|s-t|\le 1/2$ also converge, and
\[
\lim_{y\to 0+} \int_{|s-t|\le 1/2} \ln |s-t-iy|\, dk(s) = \int_{|s-t|\le 1/2}\ln |s-t|\, dk(s)
\ge -\infty .
\]
Since $\gamma(t+iy)\ge 0$, the limit cannot be equal to $-\infty$, and
\[
\int_{\R} \ln |s-t|\, dk(s) > -\infty .
\]
Our claim follows, and we have also shown that \eqref{thou} in fact holds for $z\in\C^+\cup\R$,
if $\gamma(t)$ for $t\in\R$ is defined by \eqref{defgammat}.
Also, $k(t)$ must be continuous. (It is well known that we actually obtain the
somewhat stronger conclusion that $k$ is log H\"older continuous; see again Corollary \ref{C3.1}
below.) The Poisson integral representation for the harmonic extension $k(z)$ to $\C^+$ now
shows that $k$ is actually continuous on $\C^+\cup\R$; in particular, $k(t)=\lim_{y\to 0+}
k(t+iy)$ exists everywhere, as claimed.

Our next goal is to show that $d\nu=dk$; recall that
we have already proved that $dk=d\widetilde{k}$.
Since the supports of $dk_N$, $d\nu_N$ are contained in a fixed
compact set and since every continuous function on a compact subset of $\R$
can be uniformly approximated by polynomials, it suffices to show that
\begin{equation}
\label{3.3}
\lim_{j\to\infty} \int_{\R} t^n\, dk_{N_j}(t) = \lim_{j\to\infty} \int_{\R} t^n\, d\nu_{N_j}(t)
\end{equation}
for all $n\ge 0$. Now
\[
\int_{\R} t^n\, dk_N(t) = \frac{1}{N} \sum_{j=1}^N \left( \lambda_j^{(N)} \right)^n
= \frac{1}{N}\,\textrm{tr}\: J_N^n = \frac{1}{N}\sum_{j=1}^N \langle \delta_j, J_N^n \delta_j \rangle ;
\]
here $J_N$ denotes the restriction of $J$ to $\ell_2(\{ 1, \ldots, N \})$. In other words,
if $P_N$ denotes the projection onto this subspace, then $J_N=P_NJP_N$. On the other hand,
\[
\int_{\R} t^n\, d\nu_N(t)= \frac{1}{N}\sum_{j=1}^N \langle \delta_j, J^n \delta_j \rangle ,
\]
and since $J^n\delta_j=J_N^n\delta_j$ if $n+j\le N$, \eqref{3.3} indeed follows.

It follows that
\[
g(z) = \int_{\R} \frac{dk(t)}{t-z} .
\]
This also shows that $g=w'_+$, and we have now established all identities from
Theorem \ref{T3.2}. In particular, this implies that
each of the quantities $g$, $dk$, $d\nu$
determines the other two from this list,
and also $w_{\pm}$, up to a constant, and this constant, in turn, is determined by $A$.
Also, $w_+$ or $w_-$ clearly determines everything else.

As pointed out above, existence of each of the
limits from Theorem \ref{T3.1} can always be achieved by passing to
a suitable subsequence (of a given sequence), so the relations between these
conditions that were spelled out in Theorem \ref{T3.1} follow now.
\end{proof}
The \textit{Thouless formula} \eqref{thou} will play a particularly important role
in our subsequent discussion, so we state this again, for emphasis, and add a well known consequence.
\begin{Corollary}
\label{C3.1}
For $z\in\C^+\cup\R$, we have that
\[
\gamma(z) = -\ln A + \int_{\R} \ln |t-z|\, dk(t) .
\]
The function $k(t)$ is log H\"older continuous:
\[
k(t+h)-k(t) \le \frac{C}{-\ln h} \quad\quad (0<h\le 1/2)
\]
\end{Corollary}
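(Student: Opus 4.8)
The first identity of the Corollary requires no new work: it is exactly formula \eqref{thou}, and the paragraph following \eqref{defgammat} already extended its validity from $z\in\C^+$ to $z\in\C^+\cup\R$. So here I would simply point back to that computation.

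For the log-H\"older bound my plan is the standard one: feed the Thouless formula the single extra piece of information that $\gamma\ge 0$. Recall that \eqref{w+} gives $\gamma(z)\ge 0$ on $\C^+$ (because $R_+\in\ell_2$), and since $\gamma$ extends continuously to $\R$ (this is \eqref{defgammat} and the lines below it), we also have $\gamma(t)\ge 0$ for every real $t$. Fix $t\in\R$ and $0<h\le 1/2$; I may assume that $[t,t+h]$ meets $K:=\operatorname{supp}(dk)$, for otherwise $k(t+h)-k(t)=0$ and there is nothing to prove. Put $c=t+\tfrac h2$, so that $[c-\tfrac h2,c+\tfrac h2]=[t,t+h]$, and apply the Thouless formula at the real point $c$:
\[
0\le \gamma(c)=-\ln A+\int_K\ln|s-c|\,dk(s)\quad\Longrightarrow\quad \int_K\ln|s-c|\,dk(s)\ge \ln A .
\]

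The next — and really the only — step is an elementary two-piece estimate of this integral. On $K\setminus[t,t+h]$ I bound $\ln|s-c|\le\ln R$, where $R\ge e$ is chosen larger than $\sup_{s\in K}|s-c|$; since $[t,t+h]$ meets $K$, the point $c$ lies within $\tfrac14$ of $K$, so $R$ may be taken to depend only on $\operatorname{diam}K$ (and $A$), and this piece then contributes at most $\ln R$ (using that $dk$ is a probability measure). On $[t,t+h]$ we have $|s-c|\le\tfrac h2<1$, hence $\ln|s-c|\le\ln\tfrac h2<0$, and that piece contributes at most $\ln\tfrac h2\cdot dk([t,t+h])$. Combining the two bounds and solving for $dk([t,t+h])$ — which is legitimate because $\ln\tfrac h2<0$ and, provided $R>A$, also $\ln A-\ln R<0$ — yields
\[
k(t+h)-k(t)\le dk([t,t+h])\le \frac{\ln(R/A)}{\ln(2/h)}\le\frac{\ln(R/A)}{-\ln h}\qquad(0<h\le\tfrac12) ,
\]
where the last inequality uses $\ln(2/h)=\ln 2-\ln h>-\ln h>0$. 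This is the assertion, with $C=\ln(R/A)$.

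I expect the argument to be short, and the only thing needing any care to be the bookkeeping with the constants: one must make sure that $R>A$ (so that $\ln(R/A)>0$) and that $R$ can genuinely be chosen uniformly in $t$ and $h$. Both are harmless since $dk$ is compactly supported and $A\in[(C+1)^{-1},C+1]$ is bounded; restricting attention to those $t$ with $[t,t+h]\cap K\neq\emptyset$ keeps $c$ within distance $\tfrac14$ of $K$, so one may take, say, $R=\operatorname{diam}K+A+e$. (One could instead deduce log-H\"older continuity from the subharmonicity of $\gamma$, in the manner of Craig and Simon, but the direct estimate above seems more elementary in the present setup.)
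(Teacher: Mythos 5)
Your proposal is correct. For the first identity you do exactly what the paper does: the formula is \eqref{thou}, and its extension to real $z$ is the content of the paragraph surrounding \eqref{defgammat}, so there is nothing to add. For the log H\"older bound the paper gives no argument at all---it simply refers the reader to \cite{CSim}---whereas you write out the standard Craig--Simon argument: evaluate the Thouless formula at the midpoint $c$ of $[t,t+h]$, use $\gamma(c)\ge 0$ to get $\int\ln|s-c|\,dk(s)\ge\ln A$, and split the logarithmic potential into the part over $[t,t+h]$ (bounded above by $\ln(h/2)\,dk([t,t+h])$) and the rest (bounded above by $\ln R$ since $dk$ is a probability measure of compact support). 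The bookkeeping with $R$ is handled correctly: restricting to intervals meeting $\operatorname{supp}(dk)$ keeps $c$ at bounded distance from the support, and $A$ is bounded above and below by the standing assumptions on the coefficients, so $C=\ln(R/A)$ is a legitimate uniform positive constant. The only (harmless) imprecision is the phrase ``$\gamma$ extends continuously to $\R$'': $\gamma$ is in general only upper semicontinuous on $\C^+\cup\R$, and what you actually use---and what the paper provides---is merely that the limit \eqref{defgammat} exists at every real point and is nonnegative, which is all the argument needs.
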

For the proof of the log H\"older continuity from the Thouless formula,
see \cite{CSim}.

The Thouless formula displays the \textit{Lyapunov exponent }$\gamma$ as
the logarithmic potential of the \textit{density of states }measure $dk$,
so we may expect potential theoretic notions to be useful here.
This will be a recurring theme in the sequel. For now, we need these
tools to give a precise bound on the support of $dk$. A very accessible source
for general information about potential theory is \cite{Ran}.

In \cite{Simpot}, Simon proposes the following refinement
of the familiar decomposition of the spectrum
into essential and discrete spectrum: Denote the (logarithmic) capacity
of a set $E$ by $\textrm{cap}(E)$, and define, for Borel sets $S\subset\R$,
\begin{equation}
\label{Scap}
S_{\textrm{cap}} = \left\{ x\in\R : \textrm{\rm cap}(S\cap (x-h,x+h))>0\textrm{ for all }
h> 0 \right\} .
\end{equation}
It is clear from the definition and basic properties of capacities
(as discussed in Chapter 5 of \cite{Ran}) that this set is closed
and $\textrm{cap}(S\setminus S_{\textrm{cap}})=0$.
We will use this definition mainly for $S=\sigma$, the spectrum of the Jacobi matrix $J$.
Note that then $\sigma_{\textrm{cap}}\subset\sigma_{ess}$, and the inclusion
can be strict.
\begin{Proposition}
\label{P3.1}
If $E\subset\R$ is a Borel set with $\textrm{\rm cap}(E)=0$, then $k(E)=0$. Moreover,
$dk$ is supported by $\sigma_{\textrm{\rm cap}}$.
\end{Proposition}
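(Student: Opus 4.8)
The argument will rest on the Thouless formula (Corollary~\ref{C3.1}), which displays $\gamma$ as a constant plus the logarithmic potential of $dk$: for $t\in\R$,
\[
p_{dk}(t):=\int_{\R}\ln|s-t|\,dk(s)=\gamma(t)+\ln A\ge\ln A ,
\]
since $\gamma\ge 0$. In particular the potential of $dk$ is bounded below, and hence $dk$ has finite logarithmic energy,
\[
I(dk)=\int_{\R}p_{dk}(t)\,dk(t)=\ln A+\int_{\R}\gamma(t)\,dk(t)<\infty
\]
(the integral is finite because $\gamma$ is continuous and $dk$ is compactly supported). The plan is to combine this with the standard fact that a compactly supported measure of finite logarithmic energy gives zero mass to every set of zero capacity, and then obtain the support statement as a formal corollary.

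For the first assertion, suppose $\textrm{cap}(E)=0$ but $k(E)>0$. Since $dk$ lives on some interval $[-R,R]$ we have $k(E\cap[-R,R])>0$, and inner regularity of the finite measure $dk$ gives a compact set $K\subset E\cap[-R,R]$ with $k(K)>0$; by monotonicity of capacity, $\textrm{cap}(K)=0$. The restriction $dk|_K$ still has finite energy: for $t\in K$ the part of $p_{dk}(t)$ coming from $dk|_{K^c}$ is bounded above (on the bounded set $[-R,R]$ the kernel $\ln|s-t|$ is bounded above), so the part coming from $dk|_K$, namely $\int_K\ln|s-t|\,dk(s)$, is bounded below by a constant, whence $I(dk|_K)\ge\textrm{const}\cdot k(K)>-\infty$. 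But $\textrm{cap}(K)=0$ means precisely that every nonzero finite measure on $K$ has infinite energy (the Robin constant of $K$ is $+\infty$; see \cite{Ran}). This contradiction forces $k(E)=0$.

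The support statement is then bookkeeping. By Theorem~\ref{T3.2}, $dk=d\nu$, and each $d\nu_N=\tfrac1N\sum_{n=1}^N d\|E(t)\delta_n\|^2$ is an average of spectral measures of $J$, hence supported by $\sigma=\sigma(J)$; as $\sigma$ is closed and $d\nu_N\to d\nu$ in the weak $*$ sense, $dk$ is supported by $\sigma$ too. Now $\sigma_{\textrm{cap}}\subset\sigma$, and $\textrm{cap}(\sigma\setminus\sigma_{\textrm{cap}})=0$ by the property of $S\mapsto S_{\textrm{cap}}$ recalled above, so the first part gives $k(\sigma\setminus\sigma_{\textrm{cap}})=0$. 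Together with $k(\sigma^c)=0$ this yields $k((\sigma_{\textrm{cap}})^c)=0$, i.e.\ $dk$ is supported by $\sigma_{\textrm{cap}}$.

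The only step with real content is the first part, and the one point there requiring care is the potential-theoretic input that a zero-capacity set is $dk$-null; I would either quote this from \cite{Ran} or verify it by the restriction argument above (the mechanism being that, since the kernel is bounded above on bounded sets, restricting a finite-energy measure to a compact subset leaves the energy finite, while no nonzero measure on a polar compact set can have finite energy). The remaining ingredients---the inner-regularity reduction, the elementary lower bound on the truncated potential, and the weak $*$ support argument---are routine.
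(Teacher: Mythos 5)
Your proof is correct. For the first assertion ($\textrm{cap}(E)=0$ implies $k(E)=0$) you follow essentially the paper's route: both arguments pass to a compact $K\subset E$ with $k(K)>0$ and derive a contradiction from the Thouless formula by showing that $dk|_K$ has energy $>-\infty$, which no nonzero measure on a compact set of zero capacity can have. You in fact spell out the one step the paper leaves implicit, namely why $\int_K\int_K\ln|s-t|\,dk(s)\,dk(t)>-\infty$ follows from the pointwise bound $\int_{\R}\ln|s-t|\,dk(s)=\gamma(t)+\ln A\ge\ln A$ (split off the $K^c$ part of the potential, which is bounded above since the kernel is bounded above on bounded sets). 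One small slip in the framing: with the kernel $\ln|s-t|$ the nontrivial direction is $I(dk)>-\infty$, not $I(dk)<\infty$, and $\gamma$ is only upper semicontinuous rather than continuous; but your restriction argument uses only the correct lower bound, so nothing is affected. Where you genuinely diverge is the support statement. The paper shows that $dk$ is supported by $\sigma$ via oscillation theory: if $[s,t]\cap\sigma=\emptyset$, then $\varphi_-(N,t)-\varphi_-(N,s)=O(1)$, so $k_N([s,t])\to 0$ by \eqref{3.2}. You instead invoke the identity $dk=d\nu$ from Theorem~\ref{T3.2} and observe that each $d\nu_N$ is an average of spectral measures of $J$, hence supported by the closed set $\sigma$, a property inherited by the weak $*$ limit (test against $f\in C_c(\sigma^c)$). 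Both are legitimate since Theorem~\ref{T3.2} is already established at this point; your version is softer and avoids the solution-counting machinery, at the price of leaning on the (nontrivial) identification $dk=d\nu$, whereas the paper works directly with the eigenvalue counting measures $dk_N$. The final reduction from $\sigma$ to $\sigma_{\textrm{cap}}$ via $\textrm{cap}(\sigma\setminus\sigma_{\textrm{cap}})=0$ is the same in both.
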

\begin{proof}
The assumption that $\textrm{cap}(E)=0$
means that
\[
\int\!\!\int\ln |s-t|\, d\mu(s)\, d\mu(t) =-\infty
\]
for all positive Borel measures $\mu$ that are supported by a compact subset of $E$.
If we had $k(E)>0$, then also $k(C)>0$ for some compact $C\subset E$ by regularity,
but it follows from the Thouless formula that
\[
\int_C\int_C \ln |s-t|\, dk(s)\, dk(t) > -\infty .
\]
Thus $k(E)=0$. To prove the second claim, it now suffices to show that
$dk$ is supported by $\sigma$, because we only split off a capacity zero set when
passing to $\sigma_{\textrm{cap}}$. This, however, is clear from oscillation theory:
If $[s,t]\cap\sigma = \emptyset$, then $\varphi_-(N,t)-\varphi_-(N,s)=O(1)$ as $N\to\infty$,
so, by \eqref{3.2}, $k_N([s,t])\to 0$ and thus $k((s,t))=0$.
\end{proof}
In the theory of ergodic operators, one has that almost surely,
$\sigma=\sigma_{\textrm{cap}}$ and the topological support of
$dk$ (which is defined as the smallest closed support)
is \textit{exactly }this set. Of course, this in no longer true
in the general setting of this section, where we just take limits on
subsequences. For example, if there are sufficiently long intervals with
$a(n)=1$, $b(n)=0$, then on suitable subsequences, the operator will approximately
look like the free Jacobi matrix ($a=1$, $b=0$ identically). In particular,
$\textrm{supp }dk = [-2,2]$ for these limits, but of course $\sigma$ can be
much larger, depending on what happens on the complement of these intervals.
\section{Further properties of generalized Lyapunov exponents}
We continue our discussion of the quantities introduced in the preceding section.
Throughout this section and the next, we fix a sequence $N_j\to\infty$ for which the limits
from Theorem \ref{T3.1} exist. Note, however, that this sequence is otherwise
arbitrary, so our results will apply to \textit{all }such limits.

We have defined $\gamma(t)$ for $t\in\R$ as a limit of Lyapunov exponents
$\gamma(t+iy)$. It is natural to ask if we can also obtain $\gamma(t)$ directly
from the solutions to the Jacobi difference equation \eqref{se} for $z=t$.
\begin{Proposition}
\label{P4.1}
Let $N_j\to\infty$ be a sequence for which the conditions from Theorem \ref{T3.1} hold. Then
\[
\gamma(t) = \limsup_{j\to\infty} \frac{1}{N_j} \ln R_-(N_j+1,t)
\]
for quasi every $t\in\R$.
\end{Proposition}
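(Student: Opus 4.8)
The plan is to compare the real-axis growth $\frac{1}{N_j}\ln R_-(N_j+1,t)$ with the quantity $\gamma(t)=\textrm{Re }w_-(t)$, which by Theorem \ref{T3.2} and \eqref{w-} is the limit of $\frac{1}{N_j}\ln R_-(N_j+1,z)$ as we approach $z=t$ from $\C^+$. One inequality is essentially a subharmonicity/upper-semicontinuity statement: each function $h_j(z):=\frac{1}{N_j}\ln|f_-(N_j+1,z)|$ is subharmonic on $\C$ (it is $\frac{1}{N_j}\log$ of a polynomial), and on $\C^+$ we have $h_j(z)=\frac{1}{N_j}\ln R_-(N_j+1,z)\to\gamma(z)$. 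Since $\gamma$ is itself subharmonic (being the logarithmic potential of $dk$ by the Thouless formula, Corollary \ref{C3.1}) and extends continuously to $\R$, a standard argument with the sub-mean-value property should give $\limsup_j h_j(t)\le\gamma(t)$ for \emph{every} $t\in\R$; here one averages $h_j$ over a small disc, uses that the average is close to the average of $\gamma$, and lets the radius shrink using continuity of $\gamma$ on $\C^+\cup\R$. So the upper bound is the easy direction and holds everywhere, not just quasi-everywhere.

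The reverse inequality $\limsup_j h_j(t)\ge\gamma(t)$ is where the quasi-everywhere exception enters, and I expect this to be the main obstacle. The natural approach is to argue by contradiction: the set $S_\delta=\{t:\limsup_j h_j(t)\le\gamma(t)-\delta\}$ should have capacity zero for every $\delta>0$, and then $S=\bigcup_n S_{1/n}$ is a polar set, i.e. the desired inequality holds quasi-everywhere. To show $\textrm{cap}(S_\delta)=0$, I would suppose not, pick a compact $K\subset S_\delta$ with $\textrm{cap}(K)>0$ carrying an equilibrium (or otherwise finite-energy) measure $\tau$, and integrate: $\int h_j\,d\tau=\frac{1}{N_j}\int\ln|f_-(N_j+1,t)|\,d\tau(t)$. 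Because $f_-(N_j+1,\cdot)$ is a polynomial of degree $N_j$ with zeros $\lambda_n^{(N_j)}$ and leading coefficient $1/(a(1)\cdots a(N_j))$, this equals $-\frac{1}{N_j}\ln(a(1)\cdots a(N_j))+\frac{1}{N_j}\sum_n\ln|t-\lambda_n^{(N_j)}|$ integrated against $\tau$, i.e. $-\frac{1}{N_j}\ln(a(1)\cdots a(N_j))+\int\!\!\int\ln|t-s|\,dk_{N_j}(s)\,d\tau(t)$. Using the weak-$*$ convergence $dk_{N_j}\to dk$ together with the finite-energy property of $\tau$ (which makes $s\mapsto\int\ln|t-s|\,d\tau(t)$ a bounded continuous function of $s$, or at least controllable by an easy truncation-and-domination argument since $\tau$ has finite logarithmic energy), one passes to the limit and obtains $\lim_j\int h_j\,d\tau=-\ln A+\int\!\!\int\ln|t-s|\,dk(s)\,d\tau(t)=\int\gamma(t)\,d\tau(t)$, where the last equality is again the Thouless formula.

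Now combine the pieces: by the already-established upper bound, $h_j\le\gamma+o(1)$ uniformly on the compact set $K$ (or at least $\limsup h_j\le\gamma$ pointwise with a uniform upper envelope coming from the $o(1)$ in the disc-averaging estimate), so Fatou's lemma applied to $\gamma-h_j$ gives $\int\limsup_j h_j\,d\tau\ge\limsup_j\int h_j\,d\tau=\int\gamma\,d\tau$. But on $K\subset S_\delta$ we have $\limsup_j h_j\le\gamma-\delta$, forcing $\int\gamma\,d\tau-\delta\ge\int\gamma\,d\tau$, a contradiction. Hence $\textrm{cap}(S_\delta)=0$, and the proposition follows. The delicate points to get right are: justifying the interchange of limit and double integral (handled by splitting $\ln|t-s|$ into its part near the diagonal, which is dominated using finite energy of $\tau$, and its bounded part, handled by weak-$*$ convergence); and making sure the upper-bound estimate is uniform enough on $K$ to feed into Fatou — for this it is cleanest to record, as a lemma or within the proof, that for each $\varepsilon>0$ and each $r>0$ one has $h_j(t)\le \frac{1}{\pi r^2}\int_{|z-t|<r}\gamma(z)\,dA(z)+\varepsilon$ for all large $j$ uniformly in $t\in K$, which follows from the locally uniform convergence $h_j\to\gamma$ on $\C^+$ plus the sub-mean-value inequality, and then let $r\to 0$ using continuity of the potential $\gamma$.
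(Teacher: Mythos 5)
The paper's own proof is a one-line appeal to the upper envelope theorem of potential theory (\cite[Theorem A.7]{Simpot}) applied to $dk_{N_j}\to dk$; what you have written is essentially an attempt to prove that theorem from scratch. Your upper-bound half is correct and is the standard ``principle of descent'': $h_j$ is subharmonic, uniformly bounded above on compacts, and converges to $\gamma$ off $\R$, so disc averaging plus reverse Fatou plus upper semicontinuity of the potential gives $\limsup_j h_j(t)\le\gamma(t)$ for every $t$. One caveat: you should not invoke ``continuity of $\gamma$ on $\C^+\cup\R$'' --- the paper only shows that the boundary values exist everywhere and that $\gamma$ is approximately continuous; a finite logarithmic potential need not be continuous. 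You do not need continuity: upper semicontinuity gives the shrinking-radius step pointwise, and the reverse-Fatou step in your second half only needs the elementary uniform upper bound $h_j\le C$ on $K$, not a uniform $o(1)$.

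The genuine gap is in the hard direction. After choosing a compact $K\subset S_\delta$ with $\textrm{cap}(K)>0$ and a measure $\tau$ on $K$, you must show $\liminf_j\int\!\!\int\ln|t-s|\,dk_{N_j}(s)\,d\tau(t)\ge\int\!\!\int\ln|t-s|\,dk(s)\,d\tau(t)$. By Fubini the left side is $\int p_\tau\,dk_{N_j}$ with $p_\tau(s)=\int\ln|t-s|\,d\tau(t)$. This function is upper semicontinuous (a decreasing limit of the continuous truncations $\int\max(\ln|t-s|,-M)\,d\tau(t)$), and weak-$*$ convergence against a bounded u.s.c.\ function yields only $\limsup_j\int p_\tau\,dk_{N_j}\le\int p_\tau\,dk$ --- the wrong inequality. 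Your proposed fix, truncation plus ``finite energy of $\tau$'', does not close this: the error term is $\int g_M\,dk_{N_j}$ with $g_M(s)=\int_{|t-s|\le e^{-M}}\ln\frac{1}{|t-s|}\,d\tau(t)$, and since $dk_{N_j}$ is purely atomic its atoms may sit exactly where $g_M$ is large; finite energy controls $\int g_M\,d\tau$, not $\int g_M\,dk_{N_j}$. Nor is the equilibrium measure $\omega_K$ good enough in general, since its potential can be discontinuous at the irregular points of $K$. What is needed (and what the standard proof of the envelope theorem uses) is the lemma that every compact set of positive capacity carries a nonzero measure with \emph{continuous} potential; with such a $\tau$ the interchange is immediate from weak-$*$ convergence and the rest of your contradiction argument goes through. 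So either supply that lemma or, as the paper does, cite the envelope theorem directly.
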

A property is said to hold \textit{quasi everywhere }if it holds off a Borel set of
capacity zero.
\begin{proof}
This is an immediate consequence of the upper envelope theorem \cite[Theorem A.7]{Simpot} because,
as discussed in the proof of Theorem \ref{T3.2}, we have the identity
\[
\frac{1}{N_j} \ln R_-(N_j+1,t) = -\ln (a(1)\cdots a(N_j))^{1/N_j} +
\int_{-\infty}^{\infty} \ln |s-t|\, dk_{N_j}(s) ,
\]
and $(a(1)\cdots a(N_j))^{1/N_j}\to A$, $dk_{N_j}\to dk$ in weak $*$ sense as $j\to\infty$.
\end{proof}
Let $d\rho(t)=d\|E(t)\delta_1\|^2$ be the usual spectral measure of $J$. Then
\begin{equation}
\label{schnol}
R_-(n,t) \le C_t n \quad\quad (n\ge 1)
\end{equation}
for $\rho$-almost every $t\in\R$. This type of statement is well known and sometimes referred
to as \textit{Schnol's Theorem. }Here, \eqref{schnol} follows quickly from the identity
$\int_{\R} R_-^2(n,t)\, d\rho(t) = 1$. See \cite{LSim} for a detailed discussion of
these topics.

We need one more piece of notation: put
\[
Z=\{ t\in\R :\gamma(t) = 0 \} .
\]
\begin{Theorem}
\label{T4.1}
There exists a Borel set $N\subset\R$, with $\textrm{\rm cap}(N)=0$, so that
$\rho$ is supported by $Z\cup N$. In particular, $w_{\pm}, g\in\mathcal N(\Sigma_{ac})$.
\end{Theorem}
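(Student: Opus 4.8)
The plan is to prove the two assertions in turn. For the first, I would use Schnol's Theorem \eqref{schnol} together with Proposition \ref{P4.1}. The point is that \eqref{schnol} gives $R_-(n,t)\le C_t n$ for $\rho$-almost every $t$, so $\frac{1}{N_j}\ln R_-(N_j+1,t)\le \frac{1}{N_j}\ln(C_t(N_j+1))\to 0$ for those $t$; hence $\limsup_j \frac{1}{N_j}\ln R_-(N_j+1,t)\le 0$ for $\rho$-almost every $t$. On the other hand, Proposition \ref{P4.1} identifies this $\limsup$ with $\gamma(t)$ for quasi every $t$, i.e.\ off a Borel set $N_0$ with $\mathrm{cap}(N_0)=0$. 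Combining the two, $\gamma(t)\le 0$ for $t$ outside $N_0$ and outside a $\rho$-null set; since always $\gamma(t)\ge 0$ (as $\gamma(z)\ge 0$ on $\C^+$ and $\gamma$ extends continuously to $\R$ by the Thouless formula, Corollary \ref{C3.1}), this forces $\gamma(t)=0$, i.e.\ $t\in Z$, for $t$ outside $N_0$ and a $\rho$-null set. The slightly delicate point is bookkeeping: the exceptional set in Proposition \ref{P4.1} is capacity-zero but need not be $\rho$-null, and the exceptional set in Schnol is $\rho$-null but need not be capacity-zero. So write $N=N_0\setminus(\text{$\rho$-null set})$; more carefully, the set of $t$ with $\gamma(t)>0$ is contained in $N_0\cup S$ where $\rho(S)=0$. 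We want $\rho$ supported by $Z\cup N$ with $\mathrm{cap}(N)=0$: take $N$ to be (a Borel superset of) the capacity-zero set $N_0$, and note that $\{t:\gamma(t)>0\}\setminus N\subset S$ has $\rho$-measure zero, so $\rho$ is supported by $Z\cup N$. This handles the first claim.

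For the second claim, $w_\pm,g\in\mathcal N(\Sigma_{ac})$, I would apply Corollary \ref{C4.1}. That corollary says $g\in\mathcal N(C)$, where $C=\gamma^{-1}(K)$ and $K=\{c:|\gamma^{-1}(\{c\})|>0\}$. Since $0\in K$ iff $|Z|>0$, the set $Z$ is contained in $C$ precisely when $|Z|>0$; more robustly, I claim $\Sigma_{ac}\subset C$ up to a set of Lebesgue measure zero. Indeed, from the first part, $\rho_{ac}$ is supported by $(Z\cup N)$, and since $\mathrm{cap}(N)=0$ implies $|N|=0$ (by $|S|\le 4\,\mathrm{cap}(S)$, cited after Theorem \ref{T4.3}), we get that $\rho_{ac}$ lives on $Z$ up to Lebesgue-null sets; hence an essential support $\Sigma_{ac}$ of $\rho_{ac}$ satisfies $|\Sigma_{ac}\setminus Z|=0$. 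Now on any set of positive Lebesgue measure inside $Z$, the value $\gamma=0$ is attained on a positive-measure set, so $0\in K$ and that portion of $Z$ lies in $\gamma^{-1}(\{0\})\subset C$. Therefore $|\Sigma_{ac}\setminus C|=0$, and since reflectionlessness on a set depends only on the set up to Lebesgue-null modifications, $g\in\mathcal N(C)$ gives $g\in\mathcal N(\Sigma_{ac})$.

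It remains to upgrade from $g$ to $w_\pm$. Here I would use the structural identities from Theorem \ref{T3.2}: $g=w_+'$ and $w_+(z)+w_-(z)\equiv i\pi$, so $w_-'=-w_+'=-g$ as well; thus it suffices to treat $w_+$. Write $w_+(z)=-\gamma(z)+i\pi k(z)$, and note $-\gamma=\mathrm{Re}\,w_+$, $\pi k=\mathrm{Im}\,w_+$. Saying $w_+\in\mathcal N(\Sigma_{ac})$ means $\mathrm{Re}\,w_+(t)=0$, i.e.\ $\gamma(t)=0$, for a.e.\ $t\in\Sigma_{ac}$ — but that is exactly the content $|\Sigma_{ac}\setminus Z|=0$ established just above, and continuity of $\gamma$ on $\R$ (Corollary \ref{C3.1}) ensures the boundary value is the honest value $\gamma(t)$. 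So $w_+\in\mathcal N(\Sigma_{ac})$, and hence $w_-=i\pi-w_+\in\mathcal N(\Sigma_{ac})$ since subtracting $w_+$ from the real constant multiple $i\pi$ negates the real part of the boundary value, keeping it zero on $\Sigma_{ac}$. The main obstacle, and the only place requiring genuine care, is the first paragraph: reconciling the two different notions of ``small'' exceptional set (capacity-zero from the upper-envelope/Proposition \ref{P4.1} side, Lebesgue-$\rho$-null from the Schnol side) and checking that their union can be absorbed into a single capacity-zero Borel set $N$ together with a $\rho$-null remainder, as demanded by the statement.
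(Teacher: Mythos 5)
Your proposal is correct and follows essentially the same route as the paper: Schnol's bound \eqref{schnol} combined with Proposition \ref{P4.1} forces $\gamma(t)=0$ for $\rho$-a.e.\ $t$ off the capacity-zero exceptional set of the Proposition, and the reflectionless statement then follows from $\mathrm{cap}(N)=0\Rightarrow|N|=0$ (so $\Sigma_{ac}\subset Z$ up to null sets), the identity $\mathrm{Re}\,w_{\pm}=\mp\gamma$, and Corollary \ref{C4.1} for $g$. The extra bookkeeping you supply (capacity-zero versus $\rho$-null exceptional sets) is exactly what the paper's terse proof leaves implicit, and it is handled correctly.
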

The statement on the support of $\rho$ is a deterministic version
of \cite[Theorem 1.16]{Simpot}.
\begin{proof}
By \eqref{schnol} and Proposition \ref{P4.1}, $\gamma(t)=0$ for $\rho$-almost every
$t\in\R$, except perhaps on the capacity
zero set where the Proposition doesn't apply. This gives the first claim.
Clearly $w_{\pm}\in\mathcal N(Z)$,
but also $g\in\mathcal N(Z)$ by Corollary \ref{C4.1} (we haven't proved this yet, but
the proof of Theorem \ref{T4.2} is
the very next item on the agenda). Since the capacity zero set $N$
cannot support absolutely continuous measures, we have that $\Sigma_{ac}\subset Z$.
\end{proof}

Our next goal is to prove Theorems \ref{T4.2} and \ref{T4.3}. For a more streamlined
presentation, we isolate the following simple (but key) calculation.
\begin{Lemma}
\label{L4.1}
If $0<|t|\le 2y$, then
\[
\int_0^y \left| \ln \left|1-\frac{h}{t} \right| \right|\, dh \le 12y\ln\left( 1 + \frac{y}{|t|}
\right) .
\]
\end{Lemma}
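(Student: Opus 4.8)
The plan is to rescale away $t$ and then estimate the resulting elementary integral by splitting it at the logarithmic singularity. Put $a=y/|t|$; the hypothesis $0<|t|\le 2y$ is exactly the statement that $a\ge 1/2$. The substitution $h=|t|u$ turns the left-hand side into $|t|\int_0^a\bigl|\ln|1-(\operatorname{sgn}t)u|\bigr|\,du$, i.e. the integrand becomes $\ln(1+u)$ when $t<0$ and $\bigl|\ln|1-u|\bigr|$ when $t>0$. Since $|t|a=y$ and $\ln(1+y/|t|)=\ln(1+a)$, it suffices to prove
\[
\int_0^a \bigl|\ln|1-(\operatorname{sgn}t)u|\bigr|\,du \le 12\,a\ln(1+a)\qquad(a\ge 1/2).
\]

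For $t<0$ this is immediate: the integrand is $\ln(1+u)\ge 0$, so the integral equals $(1+a)\ln(1+a)-a\le(1+a)\ln(1+a)$, and $a\ge 1/2$ forces $1+a\le 12a$, which closes this case. For $t>0$ I would split the integral at $u=1$. On $(0,\min(a,1))$ the substitution $v=1-u$ gives $\int_0^{\min(a,1)}\bigl(-\ln(1-u)\bigr)\,du\le\int_0^1|\ln v|\,dv=1$. If $a>1$, the remaining piece is $\int_1^a\ln(u-1)\,du=(a-1)\ln(a-1)-(a-1)$, which is $\le 0$ for $a\le 2$ and is $\le(a-1)\ln(a-1)\le a\ln a\le a\ln(1+a)$ for $a\ge 2$. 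Hence in all cases $\int_0^a\bigl|\ln|1-u|\bigr|\,du\le 1+a\ln(1+a)$, and this is $\le 12\,a\ln(1+a)$ because $a\mapsto 11\,a\ln(1+a)$ is increasing and already exceeds $1$ at $a=1/2$ (there $11\cdot\tfrac12\ln\tfrac32>2$). Combining with the rescaling identity yields the Lemma.

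I do not anticipate a genuine obstacle; the only care needed is to route the constant bookkeeping so that the single constant $12$ works uniformly for $a\ge 1/2$ in both sign cases, and the margin is comfortable since $\ln(3/2)>1/12$. An alternative route that avoids separating the two signs is to bound the positive part of $\ln|1-(\operatorname{sgn}t)u|$ from above by $\ln(1+u)$ (using $|1-(\operatorname{sgn}t)u|\le 1+u$) and to treat the negative part separately — it is present only for $t>0$ and on $0<u<2$, where $\int_0^2\bigl|\ln|1-u|\bigr|\,du=2$ — but the split-at-$u=1$ estimate above seems cleaner.
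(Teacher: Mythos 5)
Your rescaling $h=|t|u$, $a=y/|t|\ge 1/2$, and the reduction to $\int_0^a\bigl|\ln|1-(\operatorname{sgn}t)u|\bigr|\,du\le 12a\ln(1+a)$ is exactly the paper's strategy (the paper substitutes $s=1-h/t$ and splits according to whether $y/t\le 2$, which is the same elementary computation in slightly different clothing), and the $t<0$ case and the piece over $(0,\min(a,1))$ are handled correctly. There is one slip in the $t>0$ case: on $(1,a)$ the integrand is $\bigl|\ln(u-1)\bigr|$, not $\ln(u-1)$, and for $1<u<2$ these differ in sign. So the "remaining piece" is $\int_0^{a-1}|\ln w|\,dw$, which is \emph{not} $\le 0$ for $1<a\le 2$ (it is $(a-1)(1-\ln(a-1))$, increasing up to the value $1$ at $a=2$), and for $a>2$ it equals $2+(a-1)\ln(a-1)-(a-1)$ rather than $(a-1)\ln(a-1)-(a-1)$. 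The correct conclusion of your decomposition is therefore $\int_0^a\bigl|\ln|1-u|\bigr|\,du\le 2+a\ln(1+a)$, not $1+a\ln(1+a)$. This is harmless: the constant check you already performed, $11\cdot\tfrac12\ln\tfrac32>2$, is precisely what is needed to absorb the corrected additive constant $2$, so the bound $12a\ln(1+a)$ still holds for all $a\ge 1/2$ and the Lemma follows. Just restore the absolute value and adjust the intermediate bound accordingly.
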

\begin{proof}
We will treat explicitly only the case $0<t\le 2y$ here; the case where $t<0$ is similar, but
easier. In the former case,
\begin{equation}
\label{4.12}
\int_0^y \left| \ln \left| 1 - \frac{h}{t} \right|\, \right|\, dh =
t \int_{1-y/t}^1 \left| \ln |s| \right|\, ds .
\end{equation}
If $y/t\le 2$, then
\[
\eqref{4.12} \le 2t \int_0^1 \left| \ln s \right| \, ds = 2t \le 4y .
\]
Similarly, if $y/t>2$, then
\[
\eqref{4.12} = 2t + t\int_1^{y/t-1}\ln s\, ds \le 2t + y\ln(1+y/t) \le 12 y\ln (1+y/t) .
\]
\end{proof}
\begin{proof}[Proof of Theorem \ref{T4.2}]
We will prove here that the one-sided right approximate derivative $D_{\textrm{ap}}^+\gamma$
exists almost everywhere and \eqref{Dapgamma} holds. The same argument can then
be used to establish the corresponding statement about the left derivative, and
these two statements together will give the full claim. Here, one-sided derivatives
are defined in the obvious way; for example, we say that $(D_{\textrm{ap}}^+f)(x)=d$ if
for all $\epsilon>0$,
\[
\lim_{h\to 0+}\frac{1}{h} \left| \left\{
y\in(0,h): \left| \frac{f(x+y)-f(x)}{y} - d \right| \ge \epsilon
\right\} \right| = 0 .
\]

Our basic strategy is modelled on the proof of \cite[Theorem 5.4]{MelPVol}.
The following statements hold at (Lebesgue) almost every
point $x\in\R$:
\begin{itemize}
\item $x$ is a Lebesgue point of $k'(t)$;
\item $\lim_{y\to 0+}g(x+iy)$ exists;
\item $\lim_{y\to 0+} k_s([x-y,x+y])/y=0$, where
$k_s$ is the singular part of $dk$;
\item
$\lim_{y\to 0+}\textrm{Im }g_s(x+iy)=0$, where $g_s(z)=\int_{\R}\frac{dk_s(t)}{t-z}$.
\end{itemize}
We will now show that if $x$ has all these properties,
then $(D_{\textrm{ap}}^+\gamma)(x)$ exists and \eqref{Dapgamma} holds.

So fix such an $x$. To simplify the notation, we will again assume that $x=0$. The basic
idea is to look at averages of
\[
F(y)\equiv \textrm{Re }g(iy) + \frac{\gamma(y)-\gamma(0)}{y} .
\]
By the formulae from Theorem \ref{T3.2}, $F(y)=\int_{\R} \phi_y(t)\, dk(t)$, where
\begin{align}
\label{4.6}
\phi(t) & = \frac{t}{t^2+1} + \ln \left| 1-\frac{1}{t} \right| ,\\
\nonumber
\phi_y(t) & = \frac{1}{y} \phi\left( \frac{t}{y} \right) .
\end{align}
Note that since $\phi(t)=O(t^{-2})$ for large $|t|$, we have that
$\phi,\phi_y\in L_1(\R)$. For later use, we also observe that
\begin{equation}
\label{4.5}
\int_{-\infty}^{\infty}\phi(t)\, dt = 0 .
\end{equation}
To prove this, look at $\int_{-R}^R \phi$. Clearly, the first term from
\eqref{4.6} is odd and thus doesn't contribute to this integral, and
\begin{align*}
\int_{-R}^R \ln \left| 1 - \frac{1}{t} \right| \, dt & =
\int_{-R}^R \ln \left|\frac{t-1}{t} \right| \, dt \\
& = \int_{-R-1}^{-R} \ln |t|\, dt - \int_{R-1}^R \ln |t|\, dt \to 0 \quad (R\to\infty) ,
\end{align*}
so we obtain \eqref{4.5}.

Suppose now that $B_y$ is a family of Borel sets with the following properties:
\begin{equation}
\label{By}
B_y \subset [\delta y, y], \quad \left| B_y \right| \ge \delta y ,
\end{equation}
for some fixed (but arbitrary) $0<\delta<1/2$. Define
\[
\psi_y(t) = \frac{1}{|B_y|} \int_{B_y} \phi_h(t)\, dh .
\]
We now claim that
\begin{equation}
\label{psiy}
\left| \psi_y(t) \right| \lesssim \begin{cases} y^{-1}\ln(1+y/|t|) & 0<|t|\le 2y \\
y/t^2 & |t|>2y \end{cases} .
\end{equation}
The constant implicit in \eqref{psiy} only depends on $\delta$. Indeed,
for $|t|\le 2y$ this follows immediately from Lemma \ref{L4.1} and the obvious bound
$|t|/(t^2+y^2)\le 2/y$ (if $|t|\le 2y$). If, on the other hand, $h\le y< |t|/2$, then
Taylor's theorem shows that
\[
\left| \phi_h(t) \right| = \frac{h^2}{|t|(t^2+h^2)} + O(h/t^2) \lesssim \frac{y}{t^2} ,
\]
and the second bound from \eqref{psiy} follows.

Next, \eqref{4.5} and the Fubini-Tonelli Theorem imply that
\begin{equation}
\label{4.10}
\int_{-\infty}^{\infty} \psi_y(t)\, dt = 0 .
\end{equation}
Our next goal is to show that
\[
\lim_{y\to 0+} \int_{-\infty}^{\infty} \psi_y(t)\, dk(t) = 0 .
\]
We rewrite this as
\begin{equation}
\label{4.8}
\left| \int_{\R} \psi_y(t)\, dk(t) \right| \le \int_{\R} \left| \psi_y(t)\right|\, dk_s(t)
+ \left| \int_{\R} \psi_y(t) k'(t)\, dt \right| .
\end{equation}
Our first step will be to show that
the first integral on the right-hand side of \eqref{4.8} goes to zero as $y\to 0$.
Start by considering the contributions coming from $|t|>2y$: By \eqref{psiy},
\[
\int_{|t|>2y} \left| \psi_y(t)\right|\, dk_s(t) \lesssim \int_{\R} \frac{y}{t^2+y^2}\, dk_s(t)
= \textrm{Im }g_s(iy) \to 0 ,
\]
by our choice of $x$ ($=0$). Next, if $\epsilon>0$ is given, we can find $\eta>0$ so that if
$h\le\eta$, then $k_s([-h,h])\le \epsilon h$. If $2y<\eta$, then this, \eqref{psiy}, and
the monotone convergence theorem imply that
\begin{align*}
\int_{|t|\le 2y} \left| \psi_y(t)\right|\, dk_s(t) & =
\sum_{n=0}^{\infty} \int_{2^{-n}y < |t| \le 2^{-n+1}y} \left| \psi_y(t)\right|\, dk_s(t) \\
& \lesssim \frac{1}{y} \sum_{n=0}^{\infty}
k_s\left( \left[ -2^{-n+1}y,2^{-n+1}y\right] \right) \ln \left( 1+2^n\right) \\
& \lesssim \epsilon \sum_{n=0}^{\infty}
(n+1)2^{-n} = C\epsilon .
\end{align*}
So $\limsup_{y\to 0+} \int_{\R} |\psi_y|\, dk_s \le C\epsilon$, but $\epsilon>0$
is arbitrary here, so the first integral from the right-hand side of \eqref{4.8}
goes to zero.

As for second integral, we recall \eqref{4.10} to estimate this as follows:
\[
\left| \int_{\R} \psi_y(t) k'(t)\, dt \right| \le \int_{\R} \left| \psi_y(t) \right|
| k'(t)-k'(0)|\, dt
\]
Now a very similar argument works, so we will just give a sketch of this.
First of all, for $|t|>2y$, $\psi_y(t)$ is dominated by
the Poisson kernel $y/(t^2+y^2)$, so this part goes to zero because $x=0$ is a Lebesgue
point of $k'$. For small $|t|$, on the other hand, we again have that the contributions
coming from $|t|\approx 2^{-n}y$ will be $\lesssim \epsilon n2^{-n}$, and the sum over
$n$ is still $\lesssim \epsilon$.

Let us summarize: We have shown that
\[
\lim_{y\to 0} \int \psi_y(t)\, dk(t) = 0 .
\]
By unwrapping the definitions,
we see that this means that
\[
\lim_{y\to 0} \left(
\frac{1}{|B_y|} \int_{B_y} \frac{\gamma(h)-\gamma(0)}{h} \, dh +
\frac{1}{|B_y|} \int_{B_y} \textrm{Re }g(ih) \, dh \right) = 0 .
\]
Since $g(ih)$ converges, to $g(0)$, by the choice of $x=0$ again, the second term
converges to $\textrm{Re }g(0)$, so we can also say that
\begin{equation}
\label{4.11}
\lim_{y\to 0} \frac{1}{|B_y|} \int_{B_y} \left( \frac{\gamma(h)-\gamma(0)}{h} + \textrm{Re }g(0)
\right) \, dh = 0 ,
\end{equation}
and this holds for any choice of sets $B_y$ as in \eqref{By}.
This implies that the (right) approximate derivative of $\gamma$ at $x=0$ exists
and \eqref{Dapgamma} holds.
Indeed, if this were \textit{not }true, then we could find $\delta, \epsilon>0$ and
a sequence of sets $A_n\subset[0, y_n]$, with $y_n\to 0$, such that $|A_n| \ge
3\delta y_n$ and
\[
\left| \frac{\gamma(h)-\gamma(0)}{h} + \textrm{Re }g(0) \right| \ge \epsilon
\]
for all $h\in A_n$.
But then we can also construct
sets $B_n\subset [\delta y_n, y_n]$, $|B_n|\ge \delta y_n$,
so that either
\[
\frac{\gamma(h)-\gamma(0)}{h} + \textrm{Re }g(0) \ge \epsilon
\]
for all $h\in B_n$ or $\ldots \le -\epsilon$ for all $h\in B_n$.
However, then \eqref{4.11} with $B_{y_n}=B_n$ leads to a contradiction, so we have
to admit that the (one-sided) approximate derivative exists and \eqref{Dapgamma} holds, as claimed.
\end{proof}
\begin{proof}[Proof of Theorem \ref{T4.3}]
This is similar to the previous proof. Again, we will explicitly discuss only
the point $x=0$, to simplify the notation.

We now define
\[
\phi(t) = \ln \left| 1- \frac{1}{t} \right| .
\]
Then, by the Thouless formula \eqref{thou},
\[
\gamma(y)-\gamma(0) = \int_{-\infty}^{\infty} \phi(t/y)\, dk(t) .
\]
We will again consider averages of this, but will have to set things up differently now.
Suppose the claim of the Theorem were wrong, that is,
\begin{equation}
\label{4.14}
\textrm{\rm cap}(B_n) \ge \delta y_n^N
\end{equation}
for some $\delta>0$, $N\ge 1$, and a sequence $y_n\to 0$ with $0<y_n\le 1/4$ (say) and sets
of the form
\[
B_n = \{ t\in (-y_n,y_n): \gamma(t)\le \gamma(0)-\epsilon \} .
\]
We are also using the fact that $\gamma$ is upper semicontinuous here,
that is, $\gamma(0)\ge \limsup_{x\to 0} \gamma (x)$, and thus for fixed
$\epsilon>0$ and small $t$, we can never have that $\gamma(t)\ge \gamma(0)+\epsilon$.

By \eqref{4.14}, the definition of the capacity of a set, and Frostman's Theorem
\cite[Theorem 3.3.4(a)]{Ran}, we can now find compact sets $K_n\subset B_n$ and
probability measures $\mu_n$ on $K_n$ (more precisely, these will be the equilibrium
measures of the sets $K_n$), such that
\[
\int_{\R} \ln|s-t|\, d\mu_n(s) \ge C\ln y_n
\]
for all $t\in\R$ and $n\in\N$. Now define
\[
\psi_n(t) = \int_{\R} \phi\left( \frac{t}{h} \right) \, d\mu_n(h) .
\]
Then, if $0<|t|\le 2y_n$ (recall also that $2y_n\le 1/2$),
\[
\left| \psi_n(t)\right| \le -\int_{\R} \ln |t-h|\, d\mu_n(h) - \ln |t| \lesssim -\ln |t| .
\]
Recall that $\ln |t|\in L_1(dk)$; indeed, this is part of what Corollary \ref{C3.1} asserts. Therefore,
by dominated convergence,
\[
\lim_{n\to\infty} \int_{|t|\le 2y_n} \left| \psi_n(t) \right| \, dk(t) = 0 .
\]
On the other hand, if $|t|>2y_n$ and $0<h\le y_n$, then $|\phi(t/h)|\le\ln 2$, and thus
also $|\psi_n(t)|\le\ln 2$ for these $t$. Moreover, $\phi(t/h)\to 0$ as $h\to 0+$ for
fixed $t\not= 0$, so $\lim_{n\to\infty}\psi_n(t)=0$. Therefore,
the dominated convergence theorem now shows that
\[
\lim_{n\to\infty} \int_{|t|> 2y_n} \left| \psi_n(t) \right| \, dk(t) = 0 ,
\]
too. So $\int_{\R} \psi_n\, dk \to 0$, but
\[
\int_{\R} \psi_n(t)\, dk(t) = \int_{\R} (\gamma(h)-\gamma(0))\, d\mu_n(h) \le -\epsilon
\]
by construction of $\mu_n$. It turns out that \eqref{4.14} is not tenable.
\end{proof}
\section{Some potential theory}
The Thouless formula says that $\gamma$ is essentially the logarithmic potential
of $dk$, so it's not surprising that notions from potential theory become relevant here.
In this section, we collect some results that can be obtained quite easily in this way.
While very little here is really new, we do feel that these facts complement and
illuminate the previous discussion. In particular, we will see that quite often,
the density of states will be the equilibrium measure of its support.

As mentioned above, the use of potential
theoretic notions in spectral theory is also the subject of a recent survey by Simon
\cite{Simpot}. This paper in fact motivated most of what we do in this section and
may be consulted for further information. Other useful references are \cite{SaT,StT}.

As before, we fix a sequence $N_j\to\infty$ for which the limits
from Theorem \ref{T3.1} exist. Since this sequence is otherwise
arbitrary, our results will again apply to \textit{all }possible limit points.

We introduce some notation: $K=\sigma(dk)$ will
denote the topological support of $dk$, that is, the smallest closed support of $dk$.
From its definition and Proposition \ref{P3.1},
we see that $K$ is compact and \textit{potentially perfect }in the
sense that $K_{\textrm{cap}}=K$.
If $E\subset\R$ is compact and of positive capacity,
we will denote its equilibrium measure by $\omega_E$.
This is defined as the (unique) probability (Borel) measure supported by $E$
that maximizes
\[
I(\nu) \equiv \int\!\!\int \ln |s-t|\, d\nu(s)\, d\nu(t)
\]
among all such measures. See \cite[Section 3.3]{Ran}. Alternatively,
$\omega_E$ may also be described as the harmonic measure for the region
$\C_{\infty}\setminus E$ and the point $\infty$. Also, by definition, for
a compact set $E\subset\R$, we have that
$\textrm{cap}(E)=e^{I(\omega_E)}$.
\begin{Theorem}
\label{T5.1}
The following conditions are equivalent:\\
(a) $\gamma(t)=\alpha$ for quasi every $t\in K=\sigma(dk)$;\\
(b) $\gamma(t)=\alpha$ for $\omega_K$-almost every $t$;\\
(c) $dk=d\omega_K$

In this case, $\alpha=\ln(\textrm{\rm cap}(K)/A)$.
\end{Theorem}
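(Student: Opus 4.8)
The plan is to prove the cyclic chain of implications (a) $\Rightarrow$ (c) $\Rightarrow$ (b) $\Rightarrow$ (a), and then extract the value of $\alpha$ at the end. The key tool throughout is the Thouless formula $\gamma(z) = -\ln A + \int_K \ln|t-z|\, dk(t)$ from Corollary \ref{C3.1}, which displays $\gamma + \ln A$ as the logarithmic potential $U^{dk}$ of the density of states measure, together with the standard potential-theoretic fact that the equilibrium potential $U^{\omega_K}(t) = \int_K \ln|s-t|\,d\omega_K(s)$ equals the constant $I(\omega_K) = \ln\textrm{cap}(K)$ for quasi every $t \in K$ (and is $\le I(\omega_K)$ everywhere on $\C$), with $I$ the energy functional introduced just before the theorem.

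For (a) $\Rightarrow$ (c): assume $\gamma = \alpha$ q.e.\ on $K$, i.e.\ $U^{dk}(t) = \alpha + \ln A$ q.e.\ on $K$. I would integrate this identity against $dk$ itself; since $dk$ charges no set of capacity zero (Proposition \ref{P3.1}), the ``q.e.'' exceptional set is $dk$-null, so $I(dk) = \int_K U^{dk}\, dk = \alpha + \ln A$. The characterization of $\omega_K$ as the \emph{unique} maximizer of the energy functional $I$ among probability measures on $K$ then forces $dk = \omega_K$, \emph{provided} I first show $I(dk) \ge I(\omega_K)$, or equivalently that $dk$ is itself an equilibrium-type measure. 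The cleanest route is the classical uniqueness/domination argument: the signed measure $dk - d\omega_K$ has total mass zero, and its potential is constant q.e.\ on $K$; feeding this into the (negative-definite) energy form gives $I(dk - d\omega_K) = 0$ after the cross terms are handled using that each potential is constant a.e.\ with respect to the other measure, and negative-definiteness of the mutual energy of a zero-mass signed measure (Theorem 3.x in \cite{Ran}) yields $dk = d\omega_K$. This energy-form manipulation is the main obstacle, because one must be careful that all the integrals converge (they do, since $K$ has positive capacity and $\ln|s-t| \in L^1(dk)$ by Corollary \ref{C3.1}) and that the ``constant q.e.'' statements can legitimately be integrated against $dk$ and $\omega_K$, both of which vanish on capacity-zero sets.

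For (c) $\Rightarrow$ (b): if $dk = d\omega_K$, then $\gamma(t) + \ln A = U^{\omega_K}(t) = \ln\textrm{cap}(K)$ for quasi every $t \in K$, hence for $\omega_K$-almost every $t$, giving (b) with $\alpha = \ln(\textrm{cap}(K)/A)$ — which also pins down the constant claimed at the end. For (b) $\Rightarrow$ (a): suppose $\gamma = \alpha$ holds only $\omega_K$-a.e. Then $\gamma + \ln A = U^{dk}$ is a function which is $\le \alpha + \ln A$ on all of $K$? — no, this is not automatic, so here I would instead argue as follows. The function $u(z) = \gamma(z) + \ln A - \ln\textrm{cap}(K) - (U^{dk}(z) - \ldots)$ — more simply: $\gamma$ is subharmonic (being $-\ln A$ plus a logarithmic potential), hence so is the difference $v = U^{dk} - U^{\omega_K}$, and $v$ is harmonic off $K$, bounded, and $v \to 0$ at $\infty$; the hypothesis says $v = (\alpha + \ln A) - \ln\textrm{cap}(K) =: \beta$ $\omega_K$-a.e.\ on $K$. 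Using the maximum principle for the subharmonic function $v$ on $\C_\infty \setminus K$ together with the fact that $\omega_K$ is the harmonic measure at $\infty$, the boundary values (taken $\omega_K$-a.e.) control $v(\infty) = 0$, forcing $\beta = 0$; then a second application of the maximum principle, or the fact that a subharmonic function equal to its harmonic majorant on a set carrying harmonic measure must agree with it everywhere, gives $v \equiv 0$ off $K$ and $v = 0$ q.e.\ on $K$ by subharmonicity and Proposition \ref{P3.1}. This then yields $\gamma = \alpha$ q.e.\ on $K$, which is (a). The delicate point in this last implication is justifying the maximum-principle step when the boundary value is only known $\omega_K$-a.e.\ rather than q.e.; the resolution is that subharmonic functions are determined by their ``fine'' boundary behavior and $\omega_K$-null sets on $K$ can still carry capacity, so one genuinely needs the harmonic-measure/balayage identity $\int_K v\, d\omega_K = v(\infty)$ valid for bounded subharmonic $v$ vanishing at $\infty$, applied to both $v$ and $-v$ after first showing $-v$ is also subharmonic-like enough (it is, since $v$ is actually harmonic off $K$). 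I expect the (a) $\Rightarrow$ (c) energy computation to be the true crux; the other two implications are comparatively soft once the potential-theoretic dictionary is in place.
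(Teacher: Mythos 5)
Your implications (a)$\Rightarrow$(c) and (c)$\Rightarrow$(b) are essentially sound: the two Fubini evaluations of the mutual energy $\int\!\!\int\ln|s-t|\,dk(s)\,d\omega_K(t)$ (one via Frostman's theorem and Proposition \ref{P3.1}, the other via the hypothesis on $\gamma$) pin down $\alpha=\ln(\mathrm{cap}(K)/A)$ and give $I(dk)=I(\omega_K)$, after which uniqueness of the energy maximizer yields $dk=\omega_K$; under (a) the identity $U^{dk}=\alpha+\ln A$ holds q.e.\ and hence $dk$-a.e., so $I(dk)=\int U^{dk}\,dk$ really does equal $\alpha+\ln A$. (One remark: the negative-definiteness lemma you invoke gives $I(dk)\le I(\omega_K)$, which is automatic from maximality; the substantive content is the cross-term equality, so you can drop that machinery.)

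The genuine gap is in your closing implication (b)$\Rightarrow$(a). Under (b) you only know $U^{dk}=\alpha+\ln A$ on a set of full $\omega_K$-measure, and the complementary set in $K$ may have positive capacity; consequently neither the extended maximum principle (which tolerates only polar exceptional sets) nor your balayage identity $\int_K v\,d\omega_K=v(\infty)$ closes the argument --- the latter only recovers the constant $\alpha$, not any pointwise or q.e.\ statement, and the appeal to ``a subharmonic function equal to its harmonic majorant on a set carrying harmonic measure'' is not a theorem you can cite. The same problem blocks the energy route: to get $I(dk)=\int U^{dk}\,dk\ge\alpha+\ln A$ you must integrate against $dk$, not $\omega_K$, and (b) gives no control there. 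The paper's missing ingredient is an upper-semicontinuity argument: $\gamma$ is upper semicontinuous (being $-\ln A$ plus a logarithmic potential), and since $K=K_{\mathrm{cap}}$ every relatively open nonempty subset of $K$ has positive capacity and hence positive $\omega_K$-measure; therefore $\gamma=\alpha$ $\omega_K$-a.e.\ forces $\limsup_{t\to x}\gamma(t)\ge\alpha$ and thus $\gamma(x)\ge\alpha$ for \emph{every} $x\in K$. This pointwise lower bound gives $I(dk)=\ln A+\int\gamma\,dk\ge\ln A+\alpha=\ln\mathrm{cap}(K)$, whence $dk=\omega_K$, and then (a) follows from Frostman's theorem. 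You should reroute your cycle as (a)$\Rightarrow$(b)$\Rightarrow$(c)$\Rightarrow$(a) (the first step being trivial since $\omega_K$ charges no polar set) and insert this semicontinuity step into (b)$\Rightarrow$(c).
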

\begin{proof}
Obviously, (a) implies (b).
If (b) holds, then we may integrate the Thouless formula (see Corollary \ref{C3.1})
with respect to $d\omega_K$ and use the Fubini-Tonelli Theorem to obtain that
\[
\alpha = -\ln A + \int_K dk(t)\int_K d\omega_K(s)\, \ln |s-t| = \ln(\textrm{cap}(K)/A) .
\]
The last equality follows because $\int_K \ln |s-t|\, d\omega_K(t) = \ln \textrm{cap}(K)$
quasi everywhere on $K$ by Frostman's Theorem \cite[Theorem 3.3.4(b)]{Ran}, and
$k$ doesn't charge capacity zero sets by Proposition \ref{P3.1}.

On the other hand, by integrating with respect to $dk$, we find that
\[
I(dk) \equiv \int_{\R}dk(s)\int_{\R} dk(t)\, \ln |s-t| = \ln A + \int_{\R} \gamma(t)\, dk(t)
\ge \ln A + \alpha .
\]
The inequality follows because $\gamma\ge\alpha$ on $K$, and this can be seen as follows:
First of all, $\textrm{cap}(K\cap I)>0$ for all open intervals $I$ that intersect $K$,
and thus also $\omega_K(K\cap I)>0$ for all such $I$. Now $\gamma$ on $K$ is different
from $\alpha$ only on an $\omega_K$-null set, so $\limsup_{t\to x}\gamma(t)\ge\alpha$
for all $x\in K$ and now upper semicontinuity implies that $\gamma(x)\ge\alpha$ on $K$, as claimed.

So $I(dk)\ge \ln\textrm{\rm cap}(K)$, and thus we must have equality here, and (c) holds and
$\alpha$ has the asserted value.

Finally, if (c) holds, then, by Frostman's Theorem and the Thouless formula,
\[
\gamma(t) = -\ln A + \ln\textrm{\rm cap}(K)
\]
quasi everywhere on $K$.
\end{proof}
\begin{Theorem}
\label{T5.2}
Let $E=\sigma_{\textrm{\rm cap}}(J)$, and let $\rho$ be the spectral measure of $J$.
If $d\omega_E\ll d\rho$, then the limits from Theorem \ref{T3.1} exist as
$N\to\infty$ (without passing to a subsequence). In particular, they are unique, and in fact
$dk=d\omega_E$, and $\gamma=0$ quasi everywhere on $E$.
\end{Theorem}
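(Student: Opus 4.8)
The plan is to show that the hypothesis $d\omega_E \ll d\rho$ forces \emph{every} limit point of the sequences from Theorem \ref{T3.1} to coincide, which then upgrades subsequential convergence to ordinary convergence. First I would invoke compactness: any subsequence of $N\to\infty$ has a further subsequence $N_j$ along which all the limits in Theorem \ref{T3.1} exist, producing a density of states $dk$, a Lyapunov exponent $\gamma$, and a constant $A$. It suffices to prove that for any such subsequential limit we must have $dk = d\omega_E$ and $\gamma = 0$ quasi everywhere on $E$; since these objects are then the same for every subsequence, the full limit exists. Note $A$ is not subsequence-dependent to begin with: $(a(1)\cdots a(N))^{1/N}$ has a genuine limit here because $\gamma = 0$ q.e.\ on $E$ (once established) combined with the Thouless formula will pin down $\ln A = \ln\operatorname{cap}(E)$ via Theorem \ref{T5.1}; alternatively one argues all subsequential values of $A$ agree because they all equal $\operatorname{cap}(E)/e^{\alpha}$ with $\alpha = 0$.

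The core of the argument is to verify hypothesis (a) of Theorem \ref{T5.1} with $K = \sigma(dk)$ and $\alpha = 0$, or rather a version of it adapted to $E = \sigma_{\mathrm{cap}}(J)$. By Theorem \ref{T4.1}, there is a capacity-zero Borel set $N$ so that $\rho$ is supported by $Z \cup N$, where $Z = \{t : \gamma(t) = 0\}$. Since $d\omega_E \ll d\rho$ and $\omega_E$ (being an equilibrium measure) does not charge capacity-zero sets, $\omega_E$ is supported by $Z$; equivalently $\gamma(t) = 0$ for $\omega_E$-almost every $t \in E$. This is exactly condition (b) of Theorem \ref{T5.1} \emph{provided} $K = E$. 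So the remaining structural point is to identify the topological support of $dk$ with $E = \sigma_{\mathrm{cap}}(J)$. By Proposition \ref{P3.1}, $dk$ is supported by $\sigma_{\mathrm{cap}}$, hence $K \subset E$. For the reverse inclusion: $d\omega_E \ll d\rho$, and $\rho$ is supported by $Z \cup N$; on the other hand, again by the Thouless formula integrated against $d\omega_E$ as in the proof of Theorem \ref{T5.1}, if $dk$ failed to charge some open subinterval $I$ of $E$ then $\gamma$ would be a nonconstant (strictly subharmonic, in fact strictly positive on part of $I$) potential there, forcing $\gamma > 0$ on a set of positive $\omega_E$-measure, contradicting $\gamma = 0$ $\omega_E$-a.e. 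Hence $K \supset E$, so $K = E$, and Theorem \ref{T5.1}(b) applies: we conclude $dk = d\omega_E$ and $\gamma = 0$ quasi everywhere on $E$, with $\alpha = \ln(\operatorname{cap}(E)/A) = 0$, i.e.\ $A = \operatorname{cap}(E)$.

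Finally I would assemble the convergence statement. Since every subsequential limit of the data in Theorem \ref{T3.1} equals $(dk, A) = (d\omega_E, \operatorname{cap}(E))$, and the topology in each of (WP), (WM), (G), (N), (K), (A) is metrizable on the relevant compact sets, a standard subsequence argument gives that the original sequences converge as $N \to \infty$ without passing to a subsequence; by Theorem \ref{T3.2} all the Herglotz-function limits and measure limits are then determined and unique. The main obstacle I anticipate is the reverse support inclusion $E \subset K$: one must rule out that $dk$ "misses" part of $\sigma_{\mathrm{cap}}(J)$, and the clean way to do this is the potential-theoretic comparison above — using that $\gamma \ge \alpha$ on $K$ and is harmonic off $\operatorname{supp} dk$, so constancy of $\gamma$ $\omega_E$-a.e.\ on all of $E$ is incompatible with $\operatorname{supp} dk$ being a proper closed subset of $E$. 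Everything else is bookkeeping with results already proved, chiefly Theorems \ref{T4.1} and \ref{T5.1} and Proposition \ref{P3.1}.
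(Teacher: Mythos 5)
Your overall architecture is sound and uses the same main ingredients as the paper (Theorem \ref{T4.1} to place $\omega_E$ on $Z=\{\gamma=0\}$, the Thouless formula, Frostman's theorem, and the subsequence trick for full convergence), but there is a genuine gap in the one step you flag as the "main obstacle," namely $E\subset K=\sigma(dk)$. Your justification — that if $dk$ charged no open subinterval $I$ of $E$ then $\gamma$ would be "strictly subharmonic, in fact strictly positive on part of $I$" — is wrong as stated: the logarithmic potential $\gamma(z)=-\ln A+\int\ln|t-z|\,dk(t)$ is \emph{harmonic}, not strictly subharmonic, off $\textrm{supp}\,dk$, and nothing you write forces it to be positive somewhere on $I$. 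The step can be repaired, but only by a different mechanism: $\omega_E(E\cap I)>0$ (since $E=E_{\textrm{cap}}$), so $\gamma$ vanishes at some point of $I\subset\C\setminus K$; since $\gamma\ge 0$ and is harmonic on the connected open set $\C\setminus K$ (connected because $K\subset\R$ is compact), the minimum principle gives $\gamma\equiv 0$ there, contradicting $\gamma(z)=\ln|z|-\ln A+o(1)$ as $|z|\to\infty$. Without some such argument your appeal to Theorem \ref{T5.1}(b) is not justified, because that theorem is stated for the equilibrium measure of $K$, not of $E$.

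The paper avoids this issue entirely, and you could too. Integrate the Thouless formula against $d\omega_E$: since $\gamma=0$ $\omega_E$-a.e.\ (your correct first step), Fubini, Frostman, and the fact that $dk$ is supported by $E$ and does not charge capacity-zero sets (Proposition \ref{P3.1}) give $0=-\ln A+\ln\textrm{cap}(E)$, i.e.\ $A=\textrm{cap}(E)$. Then integrating against $dk$ gives $I(dk)=\ln A+\int\gamma\,dk\ge\ln A=\ln\textrm{cap}(E)=I(\omega_E)$, and since $dk$ is a probability measure on the compact set $E$, uniqueness of the energy maximizer forces $dk=\omega_E$ directly — no identification of $\textrm{supp}\,dk$ with $E$ is needed, and $\gamma=0$ q.e.\ on $E$ then follows from Frostman and the Thouless formula. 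Your closing reduction from "all subsequential limits agree" to full convergence is fine and is the intended reading of the paper's statement.
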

\begin{proof}
Since $\omega_E\ll \rho$ and $\omega_E$ doesn't give weight to capacity zero sets,
Theorem \ref{T4.1} implies that $\omega_E$ is supported by $Z=\{ \gamma = 0\}$. Thus
integration of the Thouless formula with respect to $d\omega_E$ gives
\[
0=-\ln A + \int_{\R} dk(t) \int_{\R} d\omega_E(s)\, \ln |s-t| = - \ln A + \ln\textrm{\rm cap}(E),
\]
by Frostman's Theorem and Proposition \ref{P3.1} again. On the other hand, integration
with respect to $dk$ shows that
\[
I(dk) = \ln A + \int_{\R} \gamma(t)\, dk(t) \ge \ln A = \ln\textrm{\rm cap}(E) ,
\]
so $dk=d\omega_E$. Theorem \ref{T5.1} now shows that $\gamma=0$ quasi everywhere on $E$.
\end{proof}
Theorem \ref{T5.2} in particular says that equilibrium measures arise
as the unique density of states measures in many situations, and
$\omega_E\in\mathcal R_0(E)$ by Corollary \ref{C4.1}.

Next, we briefly touch the subject of Denisov-Rakhmanov type theorems. For a compact,
essentially closed set $E\subset\R$, define $\mathcal {DR}(E)$ as the set of
bounded half line
Jacobi matrices $J$ that satisfy
\[
\sigma_{ess}(J)=\Sigma_{ac}(J)=E .
\]
A set $E$ is called essentially closed if it is equal to its \textit{essential closure}
\[
\overline{E}^{ess} = \left\{ x\in\R : |E\cap (x-h,x+h)| > 0
\textrm{ for all }h>0 \right\} .
\]
This terminology is common but somewhat unfortunate because $\overline{E}^{ess}$
really is the set of accumulation points of $E$ with respect to the topology
with basis $(a,b)\setminus N$, $|N|=0$. Also, note the formal analogy to \eqref{Scap}.

At present, Denisov-Rakhmanov Theorems are available only
for rather special sets $E$ (for finite unions of closed intervals and
homogeneous compact sets). These results give rather detailed
information on the asymptotic behavior of the coefficients of an arbitrary $J\in
\mathcal {DR}(E)$. See \cite{DKS,Den,Remac}. The case of more general sets $E$ is
quite unclear and actually the subject of current research. As pointed out in \cite{Simpot},
potential theoretic tools are ideal to produce poor man's versions of Denisov-Rakhmanov
Theorems, which have much less detailed conclusions, but, on the plus side, work
very generally. Here is such a result; see also the very similar discussion of Widom's Theorem
\cite{Wid} in \cite[Section 4]{Simpot}.
\begin{Theorem}
\label{T5.3}
Suppose that $E$ is an essentially closed, bounded subset of $\R$, and
suppose that $\left( \omega_E\right)_s=0$. If $J\in\mathcal {DR}(E)$, then
the limits from Theorem \ref{T3.2} exist as $N\to\infty$, and
$dk=d\omega_E$ and $\gamma=0$
quasi everywhere on $E$.

In particular,
this will hold if $E$ is a
compact, weakly homogeneous subset of $\R$ and $J\in\mathcal {DR}(E)$.
\end{Theorem}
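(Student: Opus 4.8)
The plan is to reduce the whole statement to Theorem~\ref{T5.2}, whose two hypotheses I would verify for the given set $E$. The first task is to show that $\sigma_{\textrm{cap}}(J)=E$. One inclusion, $\sigma_{\textrm{cap}}(J)\subset\sigma_{ess}(J)=E$, was already recorded after \eqref{Scap}. For the reverse inclusion I would fix $x\in E$ and $h>0$: since $E$ is essentially closed, $|E\cap(x-h,x+h)|>0$, and since $\Sigma_{ac}(J)=E$ we may take the representative $\Sigma_{ac}=\{t:d\rho/dt>0\}$, so that $\rho_{ac}$ and $\chi_E\,dt$ have exactly the same null sets. In particular the restriction of $\rho_{ac}$ to $(x-h,x+h)$ is a nonzero absolutely continuous measure, so its topological support $K$ is a compact subset of $[x-h,x+h]$ with $|K|>0$, hence $\textrm{cap}(K)\ge |K|/4>0$; moreover $K\subset\textrm{supp}(\rho_{ac})\subset\textrm{supp}(\rho)\subset\sigma$. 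Since $h>0$ was arbitrary this gives $x\in\sigma_{\textrm{cap}}(J)$, so $\sigma_{\textrm{cap}}(J)=E$.

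The second hypothesis, $d\omega_E\ll d\rho$, I would get as follows. By assumption $(\omega_E)_s=0$, so $\omega_E$ is absolutely continuous with respect to Lebesgue measure, and since $E$ is closed, $\omega_E(E^c)=0$; thus $d\omega_E=f\chi_E\,dt$ with $f\ge 0$. If $S$ is $\rho$-null it is $\rho_{ac}$-null, hence $|S\cap E|=0$ by the equivalence $\rho_{ac}\sim\chi_E\,dt$ noted above, so $\omega_E(S)=\int_{S\cap E}f\,dt=0$. Therefore $d\omega_E\ll d\rho$, and Theorem~\ref{T5.2} applies and yields exactly the desired conclusions: the limits from Theorem~\ref{T3.1} (hence those of Theorem~\ref{T3.2}) exist as $N\to\infty$, $dk=d\omega_E$, and $\gamma=0$ quasi everywhere on $E$.

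For the last assertion I would show that a compact, weakly homogeneous $E$ satisfies the hypotheses of the first part. Weak homogeneity forces $|E\cap(x-h,x+h)|>0$ for every $x\in E$, $h>0$ (otherwise the $\limsup$ in Definition~\ref{D2.1} would vanish), so $E\subset\overline{E}^{ess}\subset\overline{E}=E$, i.e.\ $E$ is essentially closed; the same estimate gives $\textrm{cap}(E\cap(x-h,x+h))\ge|E\cap(x-h,x+h)|/4>0$, so $E$ is potentially perfect ($E_{\textrm{cap}}=E$), and then, exactly as in the proof of Theorem~\ref{T5.1}, $\omega_E(E\cap I)>0$ for every open interval $I$ meeting $E$, whence $\textrm{supp}(\omega_E)=E$ and $\omega_E(E^c)=0$. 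By \cite[Theorem 5.4]{MelPVol}, $\omega_E$ is reflectionless on its support $E$, so $\omega_E\in\mathcal R(E)$, and Corollary~\ref{C2.1} gives $(\omega_E)_s(E)=0$; together with $\omega_E(E^c)=0$ this yields $(\omega_E)_s=0$. Now the first part of the theorem applies.

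The step I expect to be the main obstacle is the identification $\sigma_{\textrm{cap}}(J)=E$: one must pass from the purely spectral data $\sigma_{ess}=\Sigma_{ac}=E$ to a capacitary statement, and the delicate point is producing, near each point of $E$, a compact subset of the spectrum of positive capacity. This is where one genuinely needs both that $E$ is essentially closed and that $\rho_{ac}$ is equivalent to $\chi_E\,dt$ (not merely supported by $E$), so that the restriction of $\rho_{ac}$ to a small interval is a nonzero absolutely continuous measure and hence has a support of positive Lebesgue measure.
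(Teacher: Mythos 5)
Your proposal is correct and follows essentially the same route as the paper: reduce to Theorem~\ref{T5.2} by checking $\sigma_{\textrm{cap}}(J)=E$ (via essential closedness and $\rho_{ac}\sim\chi_E\,dt$) and $d\omega_E\ll d\rho$, and then handle the weakly homogeneous case via the reflectionlessness of $\omega_E$ on its support together with Corollary~\ref{C2.1}. The paper's proof is just a terser version of this ("we are clearly in the situation of Theorem~\ref{T5.2}"), and your verification of the hypotheses, including the point that positive Lebesgue measure of $\sigma\cap(x-h,x+h)$ forces positive capacity, is exactly the intended argument.
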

As the following proof will show, we may actually replace the assumption
that $J\in\mathcal {DR}(E)$ by the slightly weaker condition
\[
\sigma_{\textrm{cap}}(J)=\Sigma_{ac}(J)=E .
\]
\begin{proof}
$E=E_{\textrm{cap}}$ since $E$ is essentially closed, and we are assuming that
$d\rho_{ac}(t)\sim \chi_E(t)\, dt$, so we are clearly in the situation of Theorem \ref{T5.2}.
The last part follows from Corollay \ref{C2.1} because $\omega_E$ is reflectionless on $E$.
\end{proof}
Finally, we collect some general inclusions and inequalities; most of these were
already obtained above, but it seems useful to have them readily available. Recall
that we defined $K$ as the topological support of $dk$ and
$Z=\{t\in\R :\gamma(t)=0 \}$, $A=\lim_{j\to\infty}(a(1)\cdots a(N_j))^{1/N_j}$.
The absolutely continuous spectrum, $\sigma_{ac}$, can be obtained from
$\Sigma_{ac}$ as $\sigma_{ac}=\overline{\Sigma_{ac}}^{ess}$.
\begin{Theorem}
\label{T5.4}
We have that
$Z\supset\Sigma_{ac}(J)$ and $\overline{Z}^{\textrm{ess}}\supset \sigma_{ac}(J)$, and
\[
\overline{Z}^{\textrm{ess}} \subset K \subset \sigma_{\textrm{\rm cap}}(J) , \quad\quad
\textrm{\rm cap}(Z) \le A \le \textrm{\rm cap}(K) .
\]
\end{Theorem}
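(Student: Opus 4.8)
The plan is to deduce all six assertions from results already proved. Three of them are essentially immediate. The inclusion $Z\supset\Sigma_{ac}(J)$ is exactly the statement proved at the end of the argument for Theorem \ref{T4.1}. Since the essential closure $\overline{\cdot}^{\textrm{ess}}$ is monotone and $\sigma_{ac}=\overline{\Sigma_{ac}}^{\textrm{ess}}$, applying it to $\Sigma_{ac}\subset Z$ gives $\sigma_{ac}(J)=\overline{\Sigma_{ac}}^{\textrm{ess}}\subset\overline{Z}^{\textrm{ess}}$. And $K\subset\sigma_{\textrm{\rm cap}}(J)$ is part of Proposition \ref{P3.1}: $K$ is by definition the topological support of $dk$, and $dk$ is supported by the \emph{closed} set $\sigma_{\textrm{\rm cap}}(J)$.

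For $\overline{Z}^{\textrm{ess}}\subset K$ I would in fact establish the stronger inclusion $Z\subset K$. By the Thouless formula (Corollary \ref{C3.1}), which we noted is valid on $\C^+\cup\R$, the Lyapunov exponent is the logarithmic potential
\[
\gamma(z)=-\ln A+\int_{\R}\ln|t-z|\,dk(t) ;
\]
the right-hand side makes sense for every $z\in\C$ and is a subharmonic function there, harmonic on $\C\setminus K$ (away from $\textrm{supp}(dk)$ the potential is harmonic). It agrees with $-\textrm{Re}\,w_+$ on $\C^+$, hence is $\ge 0$ there; by the symmetry $\gamma(\bar z)=\gamma(z)$ and the fact that $\gamma\ge 0$ on $\R$, it is $\ge 0$ on all of $\C$. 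Now $K\subset\R$ is compact, so $\C\setminus K$ is connected and unbounded, and $\gamma(z)\to+\infty$ as $z\to\infty$ (it behaves like $\ln|z|$). By the minimum principle a harmonic function on a connected open set cannot attain the value $0$, which here is its infimum, at an interior point without being identically $0$; since $\gamma\to+\infty$, we conclude $\gamma>0$ throughout $\C\setminus K$. In particular $\gamma(t)>0$ for $t\in\R\setminus K$, i.e. $Z\subset K$, and since $K$ is closed, $\overline{Z}^{\textrm{ess}}\subset\overline{Z}\subset K$.

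It remains to prove the capacity estimates. First note $\textrm{\rm cap}(K)>0$, since otherwise $dk(K)=0$ by Proposition \ref{P3.1}, contradicting $dk(K)=1$; hence the equilibrium measure $\omega_K$ is defined. Integrating the Thouless formula against $dk$ (Fubini--Tonelli applies since the inner integral $\int\ln|t-s|\,dk(s)=\gamma(t)+\ln A$ is bounded below by $\ln A$, and $\gamma$ is bounded on the compact set $K$) gives $I(dk)=\ln A+\int_K\gamma\,dk\ge\ln A$, because $\gamma\ge 0$. Since $dk$ is a probability measure supported by the compact set $K$, the maximality in the definition of $\omega_K$ yields $I(dk)\le I(\omega_K)=\ln\textrm{\rm cap}(K)$, so $A\le\textrm{\rm cap}(K)$. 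For the lower estimate, recall that the Borel set $Z$ is capacitable, so $\textrm{\rm cap}(Z)=\sup\{\textrm{\rm cap}(C):C\subset Z\textrm{ compact}\}$, and it suffices to show $\textrm{\rm cap}(C)\le A$ for each such $C$; we may assume $\textrm{\rm cap}(C)>0$. On $C$ we have $\gamma=0$, that is, $\int\ln|t-z|\,dk(t)=\ln A$ for every $z\in C$. Integrating this identity against $d\omega_C(z)$ and interchanging the order of integration (justified because $\ln|t-z|$ is bounded above on the compact product $\textrm{supp}(dk)\times C$, while Frostman's Theorem gives $\int_C\ln|t-z|\,d\omega_C(z)\ge\ln\textrm{\rm cap}(C)>-\infty$ for every $t$) we obtain
\[
\ln A=\int_{\R}\left(\int_C\ln|t-z|\,d\omega_C(z)\right)dk(t)\ge\int_{\R}\ln\textrm{\rm cap}(C)\,dk(t)=\ln\textrm{\rm cap}(C) ,
\]
again by Frostman and $dk(\R)=1$. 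Taking the supremum over $C$ gives $\textrm{\rm cap}(Z)\le A$.

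The step requiring the most care is the last one: one must invoke capacitability of Borel sets to reduce to compact $C$, use the correct direction of Frostman's inequality ($\int\ln|t-z|\,d\omega_C(z)\ge\ln\textrm{\rm cap}(C)$ for \emph{all} $t$, with equality only quasi-everywhere on $C$), and justify the interchange of integrals. The minimum-principle argument for $Z\subset K$ also hinges on the small but essential observation that $\C\setminus K$ is connected because $K\subset\R$.
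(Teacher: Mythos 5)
Your proof is correct. Five of the six assertions are handled exactly as in the paper: $\Sigma_{ac}\subset Z$ is quoted from the proof of Theorem \ref{T4.1}, the statement about $\sigma_{ac}$ follows by monotonicity of the essential closure, $K\subset\sigma_{\textrm{cap}}(J)$ is Proposition \ref{P3.1}, and both capacity bounds are obtained, as in the paper, by integrating the Thouless formula against $d\omega_C$ (resp.\ against $dk$) and invoking Frostman's Theorem; your extra care with the Fubini interchanges and with reducing $\textrm{cap}(Z)$ to compact subsets only makes explicit what the paper leaves implicit. The one genuine divergence is the inclusion $\overline{Z}^{\textrm{ess}}\subset K$. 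The paper gets it from Corollary \ref{C4.1}: $g\in\mathcal N(Z)$ forces $\chi_Z\,dt\ll dk$, so every point of essential density of $Z$ lies in $\textrm{supp}(dk)$. You instead prove the stronger statement $Z\subset K$ by noting that $\gamma$ is a nonnegative harmonic function on the connected open set $\C\setminus K$ which tends to $+\infty$ at infinity, hence is strictly positive there by the minimum principle. Your route yields a sharper conclusion and is independent of the reflectionless machinery of Section 4, at the price of the (correctly flagged) topological observation that $\C\setminus K$ is connected because $K$ is a compact subset of $\R$; the paper's route is a one-line consequence of a result it has already established.
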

\begin{proof}
We already observed in the proof of Theorem \ref{T4.1} that $\Sigma_{ac}\subset Z$,
so $\sigma_{ac}=\overline{\Sigma_{ac}}^{ess}\subset \overline{Z}^{ess}$. Proposition \ref{P3.1}
informs us that $K\subset\sigma_{\textrm{cap}}$. By Corollary \ref{C4.1},
$g\in\mathcal N(Z)$, so $\chi_Z\, dt\ll dk$ and hence $\overline{Z}^{ess}\subset K$.
If $B\subset Z$ is compact, $\textrm{cap}(B)>0$,
then integration of the Thouless formula yields
\[
0 = -\ln A + \int_{\R}dk(t) \int_{\R} d\omega_B(s) \ln |s-t| \ge -\ln A + \ln\textrm{\rm cap}(B) ,
\]
so $\textrm{cap}(Z)\le A$. On the other hand,
\[
I(dk) = \ln A + \int_{\R} \gamma(t)\, dk(t) \ge \ln A ,
\]
and $dk$ is supported by $K$, so $\textrm{cap}(K)\ge A$.
\end{proof}
Since the limits from Theorem \ref{T3.1} can be taken on sub-subsequences
of arbitrary subsequences, this in particular says the following:
\begin{Corollary}
\label{C5.1}
Let
\begin{align*}
A_- & = \liminf_{N\to\infty} \left( a(1)a(2) \cdots a(N) \right)^{1/N} , \\
A_+ & = \limsup_{N\to\infty} \left( a(1)a(2) \cdots a(N) \right)^{1/N} .
\end{align*}
Then $\textrm{\rm cap}(\Sigma_{ac})\le A_-$ and $\textrm{\rm cap}(\sigma(J))
\ge A_+$; also, $|\Sigma_{ac}|\le 4A_-$.
\end{Corollary}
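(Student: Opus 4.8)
The plan is to obtain all three bounds from Theorem~\ref{T5.4} by applying it along judiciously chosen subsequences, using that $\Sigma_{ac}$ and $\sigma(J)$ are determined by $J$ alone, while $\gamma$, $dk$, $K$, $Z$ and $A$ depend on the chosen sequence $N_j$.

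First I would prove $\textrm{cap}(\Sigma_{ac}) \le A_-$. Pick $M_j \to \infty$ with $(a(1)\cdots a(M_j))^{1/M_j} \to A_-$; this is possible by the definition of the $\liminf$, and $0 < A_- \le C+1 < \infty$ by the standing bounds on $a$. As remarked after Theorem~\ref{T3.1} (and in the opening paragraph of the proof of Theorems~\ref{T3.1}--\ref{T3.2}), any sequence has a subsequence along which all the limits of Theorem~\ref{T3.1} exist; apply this to $(M_j)$ to get $N_j$. The associated constant $A$ is then the limit of a subsequence of $(a(1)\cdots a(M_j))^{1/M_j}$, hence $A = A_-$. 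By Theorem~\ref{T5.4}, $\Sigma_{ac} \subset Z$ and $\textrm{cap}(Z) \le A = A_-$, so monotonicity of logarithmic capacity gives $\textrm{cap}(\Sigma_{ac}) \le A_-$. The bound $|\Sigma_{ac}| \le 4A_-$ is then immediate from $|S| \le 4\,\textrm{cap}(S)$ (\cite[Theorem 5.3.2(c)]{Ran}).

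Next, $\textrm{cap}(\sigma(J)) \ge A_+$. Symmetrically, choose $M_j \to \infty$ realizing the $\limsup$ and refine to a subsequence $N_j$ along which all of Theorem~\ref{T3.1} holds, so that now $A = A_+$. Theorem~\ref{T5.4} gives $A \le \textrm{cap}(K)$ and $K \subset \sigma_{\textrm{cap}}(J) \subset \sigma(J)$, the last inclusion because $\sigma(J)$ is closed; hence $\textrm{cap}(\sigma(J)) \ge \textrm{cap}(K) \ge A_+$, again by monotonicity.

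No real obstacle arises; the one point worth stating carefully is the refinement of an arbitrary subsequence to one along which every convergence in Theorem~\ref{T3.1} holds simultaneously (Banach--Alaoglu for (N), (K); a normal families argument for (WP), (WM), (G); and then (A)), since this is what lets us pin the constant $A$ to the prescribed value $A_\mp$.
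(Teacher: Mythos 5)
Your proof is correct and is essentially the paper's own argument: the Corollary is deduced from Theorem~\ref{T5.4} by choosing sequences realizing $A_\mp$ and passing to sub-subsequences along which all limits from Theorem~\ref{T3.1} exist, exactly as you describe. The only point the paper adds in a remark (and which you could mention) is that, since $\Sigma_{ac}$ is defined only up to Lebesgue-null sets, the capacity bound is to be read for the representative of $\Sigma_{ac}$ contained in $Z$.
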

The bound on $\textrm{cap}(\Sigma_{ac})$ must be interpreted carefully because $\Sigma_{ac}$
is only defined up to sets of Lebesgue measure zero. From the proof of Theorem \ref{T5.4},
however, it is clear how to proceed: we have that $\textrm{cap}(S)\le A_-$ for \textit{some}
set $S$ with $|S\triangle\Sigma_{ac}|=0$, and in fact it suffices to take $S\subset Z$ to make
sure that this inequality holds.

The last part Corollary \ref{C5.1} follows because $|S|\le 4\, \textrm{cap}(S)$ for Borel sets
$S\subset\R$ \cite[Theorem 5.3.2(c)]{Ran}. In the ergodic setting, this result
was obtained (much) earlier in \cite{DeiftSim}.

\end{document}